\documentclass{amsart}
\usepackage{amssymb}
\usepackage{booktabs}
\makeatletter

\let\Re\undefined

\DeclareMathOperator{\Re}{Re}

\DeclareMathOperator{\Ind}{Ind}

\DeclareMathOperator{\ch}{ch}
\DeclareMathOperator{\id}{id}
\DeclareMathOperator{\ord}{ord}
\DeclareMathOperator{\vol}{vol}
\DeclareMathOperator{\meas}{meas}
\DeclareMathOperator{\hmi}{hmi}

\newcommand{\dd}[1]{\mathop{\mathrm{d}#1}}

\newcommand{\al}{\alpha}
\newcommand{\be}{\beta}
\newcommand{\ga}{\gamma}
\newcommand{\de}{\delta}

\newcommand{\la}{\lambda}
\newcommand{\ze}{\zeta}
\newcommand{\wg}{\varpi}
\newcommand{\ph}{\varphi}
\newcommand{\om}{\omega}
\newcommand{\De}{\varDelta}
\newcommand{\Th}{\varTheta}
\renewcommand{\Pi}{\varPi}

\newcommand{\psm}[2][\Biggl(]
  {\left(\vphantom{#1}\begin{smallmatrix}#2\end{smallmatrix}\right)}

\newcommand\abs{\@ifstar{\abs@star}{\abs@nostar}}
\newcommand{\abs@nostar}[1]{\lvert#1\rvert}
\newcommand{\abs@star}[1]{\left\lvert#1\right\rvert}
\newcommand\Abs{\@ifstar{\Abs@star}{\Abs@nostar}}
\newcommand{\Abs@nostar}[1]{\lVert#1\rVert}
\newcommand{\Abs@star}[1]{\left\lVert#1\right\rVert}

\newcommand\set[1]{\Bigl\{#1\Bigr\}}

\newcommand{\cj}[1]{{\overline{#1}}}

\newcommand{\ade}{\mathbb{A}}

\newcommand{\lqu}{\backslash}

\newcommand{\makegroup}[7][k]{%
  \newcommand{#3}[1][#1]{{#2_{##1}}}%
  \newcommand{#4}{{#2_\ade}}%
  \newcommand{#5}[1][#1]{{#3[##1]\lqu#4}}%
  \newcommand{#6}[1][v]{{#2_{##1}}}
  \newcommand{#7}{{#2_\infty}}}

\makegroup{G}{\Gk}{\Ga}{\Gq}{\Gv}{\Gin}
\makegroup{H}{\Hk}{\Ha}{\Hq}{\Hv}{\Hin}
\makegroup{M}{\Mk}{\Ma}{\Mq}{\Mv}{\Min}
\makegroup{N}{\Nk}{\Na}{\Nq}{\Nv}{\Nin}
\makegroup{P}{\Pk}{\Pa}{\Pq}{\Pv}{\Pin}
\makegroup{Q}{\Qk}{\Qa}{\Qq}{\Qv}{\Qin}
\makegroup{\Th}{\Sk}{\Sa}{\Sq}{\Sv}{\Sin}

\newcommand{\GL}{\mathrm{GL}}
\newcommand{\gO}{\mathrm{O}}

\newcommand{\CC}{\mathbb{C}}
\newcommand{\RR}{\mathbb{R}}
\newcommand{\QQ}{\mathbb{Q}}

\newcommand{\kx}{{k^\times}}

\newcommand{\oo}{\mathfrak o}
\newcommand{\ox}{\mathfrak o^\times}

\newcommand{\Cx}{{\CC^\times}}

\renewcommand{\thmhead}[3]{\thmnumber{{\rm(#2)}\@ifnotempty{#1#3}%
    { }}\thmname{#1\@ifnotempty{#3}{\,---\,}}\thmnote{#3}}
\newcommand{\thref}{\eqref}

\newtheorem{prop}[equation]{Proposition}
\newtheorem{lem}[equation]{Lemma}
\newtheorem*{thm*}{Theorem}
\newtheorem*{prop*}{Proposition}
\newtheorem*{lem*}{Lemma}
\theoremstyle{definition}

\newtheorem*{defn*}{Definition}

\setcounter{tocdepth}{1}

\usepackage[lite]{amsrefs}
\newcommand{\Zbl}[1]{Zbl\,{#1}}

\begin{document}

\title[Periods of Eisenstein series of orthogonal groups (even primes)]
  {Compact periods of Eisenstein series of orthogonal groups of rank one at
   even primes}
\author{Jo\~ao Pedro Boavida}
\address{Departamento de Matem\'atica\\
         Instituto Superior T\'ecnico\\Universidade de Lisboa\\
         Av.\ Prof.\ Dr.\ Cavaco Silva\\2744--016 Porto Salvo, Portugal}
\thanks{Published in New York J. Math.\ \textbf{20} (2014), 153\ndash 181,
  available at \url{http://nyjm.albany.edu/j/2014/20-9.html}.
  Final submitted version, other than some citation updates.}
\email{joao.boavida@tecnico.ulisboa.pt}
\keywords{Eisenstein series, period, automorphic, $L$-function,
  orthogonal group}
\subjclass[2010]{Primary 11F67; Secondary 11E08, 11E95, 11S40}

\begin{abstract}
Fix a number field $k$ with its adele ring $\ade$.
Let $G=\gO(n+3)$ be an orthogonal group of $k$-rank $1$ and $H=\gO(n+2)$ a
$k$-anisotropic subgroup.  We have previously described how to factor the
global period
\begin{equation*}
  (E_\ph,F)_H = \int_{\Hq}E_\ph\cdot\cj F
\end{equation*}
of a spherical Eisenstein series $E_\ph$ of $G$ against a cuspform $F$
of $H$ into an Euler product.  Here, we describe how to evaluate the factors 
at even primes.  When the local field is unramified, we carry out the
computation in all cases.  We show also concrete examples of the complete
period when $k = \QQ$.  The results are consistent with the Gross--Prasad 
conjecture.
\end{abstract}

\maketitle

\section*{Introduction}

Fix a number field $k$; in some examples to follow, we take $k=\QQ$. 
Equip $k^{n+3}$ with a quadratic form $\langle\;,\;\rangle$ with matrix
\begin{equation*}
  \begin{pmatrix}1&&\\&*&\\&&-1\end{pmatrix}
\end{equation*}
with respect to the orthogonal decomposition $k^{n+3}=(k\cdot
e_+)\oplus k^{n+1}\oplus(k\cdot e_-)$.  Let $G=\gO(n+3)$ and its
subgroups act always on the right.  Let $H=\gO(n+2)$ be the fixer of $e_-$ 
and $\Th=\gO(n+1)$ be the fixer of both $e_+$ and $e_-$.  We consider only 
the case when $k^{n+2}$ is anisotropic; in particular, $G$ has $k$-rank $1$ 
and $H$ is $k$-anisotropic.

In this paper, we compute some automorphic periods associated to $G$ and $H$.
Such periods contain information about representations of those groups, as 
well as information of interest about certain $L$--functions.  The 
Gross--Prasad conjecture \cites{GrPr1, GrPr2, GrPr3} predicts that a 
representation of $\gO(n)$ occurs in a representation of $\gO(n+1)$ if and 
only if the corresponding tensor product $L$--function is nonzero on 
$\Re s = \frac12$.  The results we obtain are consistent with the
prediction.

Because $G$ is a reductive group, we can use its parabolics to organize the 
spectral decomposition of functions in $L^2(\Gq)$.  In our case, there is only
one parabolic up to conjugacy; in section \ref{s:setup}, we choose a 
representative $P$, with Levi component of the form $M \cong \Th \times \GL(1)$ 
and unipotent radical $N$.  Let also $K^G$ be some maximal compact; we will 
only consider right $K^G$--invariant functions, so-called \emph{spherical} 
functions.  We have two main families of spectral components (with more flags 
of parabolics we would have more).

The \emph{cuspidal components} decompose discretely, and are in some sense the 
analogue of the compact group components we have in $H$; we will not be much
concerned with them in this paper.

The \emph{Eisenstein series} are indexed by cuspidal components $\eta$ along 
$\Th$ and by characters $\om$ along $\GL(1)$.  These so-called \emph{Hecke
characters} are the analogue of Dirichlet characters in number fields other 
than $\QQ$.

We use $\theta$ to indicate an element of $\Th \subset G$ and
$m_\la$ for the element in $G$ corresponding to $\la \in \GL(1)$.  Given such 
$\eta$ and $\om$, we can extend
\begin{equation*}
  \ph(m_\la \theta)
  = \om(m_\la) \cdot \eta(\theta)
\end{equation*}
by left $\Na$--invariance and right $K^G$--invariance.  We will sometimes
use $\ph_{\om,\eta}$; other times $\ph_\om$ (when $\eta=1$); other times 
simply $\ph$.  We define the 
Eisenstein series $E_\ph = E_{\om, \eta}$ as the meromorphic continuation of
\begin{equation*}
  \sum_{\ga \in \Pk\lqu\Gk} \ph(\ga g).
\end{equation*}

The characters $\om$ are indexed in particular (but in number fields other than
$k=\QQ$, not only) by a continuous parameter $s$ chosen along 
$\Re s = \frac 12$ and appearing in the form $\de_P^s$, where $\de_P$ is 
the modular function of the parabolic $P$ and $\frac12$ has to do with 
normalizations of Haar measures.

Finally, the sum defining an Eisenstein 
series converges only for sufficiently large $\Re s$.  Therefore, we must 
also include any residues to the right of $\Re s = \frac12$ in the spectral
decomposition.

Let $E_\ph=E_{\om,\eta}$ be the spherical Eisenstein series associated with 
the Hecke character $\om:\GL(1)\to\Cx$ and the cuspidal component $\eta$ on 
$\Th$, and let $F$ be cuspidal on $H$.  We are interested in the period
\begin{equation*}
  (E_\ph,F)_H
  = \int_{\Hq} E_\ph\cdot \cj F.
\end{equation*}
(Maybe some aspects of this computation may guide the corresponding computation 
for periods along $H$ of cuspidal components of $G$, but the attempt must be 
left for another occasion.)

These same periods (called there global Shintani functions) were used by Katu,
Murase, and Sugano \cite{KaMuSu, MuSu} to obtain and study integral expressions 
for standard $L$--functions of the orthogonal group.  We already mentioned the 
Gross--Prasad conjecture.  Ichino and Ikeda \cite{IcIk} discuss further details 
and broader context is provided in papers by Gross, Reeder \cite{GrRe}, Jacquet, 
Lapid, Offen, and/or Rogawski \cites{JaLaRo,LaRo,LaOf}, Jiang \cite{Ji},
and Sakellaridis and Venkatesh \cites{Sak,SaVe}.

Both the uncorrected global period and all correction factors computed so
far are indeed nonzero.

Using the Phragm\'en--Lindel\"of principle, it is often possible to obtain 
\emph{convex} bounds for asymptotics of moments of automorphic $L$--functions.
The Lindel\"of hypothesis (a consequence of the Riemann hypothesis) yields
significantly better bounds, but any \emph{subconvex} bounds are of interest.
Iwaniec and Sarnak \cite{IwSa} survey important ideas about $L$--functions,
including subconvexity problems.

Diaconu and Garrett \cite{DiGa1, DiGa2} used a specific spectral identity to 
obtain subconvex bounds for second moments of automorphic forms in $\GL(2)$ 
over \emph{any} number field $k$.  That strategy has been explored in other 
other papers by them and/or Goldfeld \cite{DiGa2, DiGa3, DiGaGo} and used by 
Letang \cite{Le}.  In another paper \cite{Bo2} (from which, incidentally, this
paragraph and the third before it are taken almost verbatim), this author has 
applied that strategy to the periods discussed here to obtain a spectral 
identity for second moments of Eisenstein series of $G$.

Elsewhere~\cite{Bo}, the author has discussed how to factor the period
\begin{equation*}
  (E_\ph,F)_H
  = \int_{\Hq} E_\ph\cdot \cj F
\end{equation*}
into an Euler product and how to compute its local factors at odd primes.  
For the reader's convenience, we recapitulate those results as briefly 
as possible.

Because $H\supset\Th$ is a Gelfand pair \cite{AiGoSa, KaMuSu} and $\eta$ is 
spherical, we have
\begin{equation*}
  \int_{\Sq}\eta(\theta)\cdot\cj F(\theta h)\dd\theta
  = (\eta,F)_\Th \cdot f(h),
\end{equation*}
where $f$ is a spherical vector of $\Ind_\Th^H1$ normalized by $f(1)=1$.
Let $\pi=\otimes_v\pi_v$ be the irreducible representation generated by $f$
and $\om = \otimes_v \om_v$.
Letting $f_v$ be a generator of $\pi_v$ normalized by $f_v(1)=1$, the global
period is
\begin{equation}\label{e:global}
  (E_\ph,F)_H
  = (\eta,F)_\Th \cdot \int_{\Sa\lqu\Ha}\ph_\om\cdot f
  = (\eta,F)_\Th \cdot C_f \cdot \prod_v \int_{\Sv\lqu\Hv}\ph_{\om,v}\cdot f_v,
\end{equation}
for some constant $C_f$ (which is $1$ when $F=1$).

Because $\ph_{\om,v}$ and $f_v$ are spherical, and $\ph_v(1)=f_v(1)=1$, the
local integral is simply $\vol(\Sv\lqu\Hv)$ at anisotropic places $v$.

At isotropic places, we consider some parabolic $Q_v \subset H_v$.
If the period is nonzero, then $\pi_v$ is a quotient of a degenerate 
unramified principal series with respect to the Levi component of $Q_v$ and 
with parameter $\be_v$.

Let $\De$ be the discriminant of the restriction of $\langle\;,\;\rangle$ to 
$k^{n+2}$.  In the preceding paper \cite{Bo}, we determined the local factors 
at odd primes.  In this paper, we discuss what happens at even primes.

In section~\ref{s:setup}, we describe in more detail the conventions that
we used at odd primes and will adapt to the even primes; in particular, we
introduce the function $X$ and show how the local period may be readily 
obtained from it.

In section~\ref{s:even}, we explain what the required adaptations are and 
articulate a general method to determine the function $X$ at an even prime,
based on the number of solutions of an equation on finitely many finite rings
of characteristic $2$.

In section~\ref{s:dyadic}, we discuss two methods to count such solutions.
The more flexible of them, however, is only applicable when the local fields is
unramified.

In the subsequent sections, we apply either of those methods to each of the 
anisotropic forms, thus obtaining the function $X$ associated to
that form.  (When we apply the second method, the computation is 
limited to the unramified case.)

In all computations, it will transpire that only the dimension 
of the anisotropic component, the Hasse--Minkowski invariant, whether the 
discriminant is a unit, and (when it is a unit) its quadratic defect, play a 
role in the outcome.

To obtain the complete periods, we would need to know the factors at all
places, including the ramified cases not established here and the archimedean
places where the form is isotropic.  But if we restrict ourselves to $k = \QQ$, 
we have no ramification at the even prime and we can choose an anisotropic 
form at the archimedean place.  In section \ref{s:examples}, we combine all 
results established so far to compute the global period of the standard form 
with signature $(n+2, 1)$ in $k = \QQ$.

\subsection*{Acknowledgements}

This paper is a followup on the author's doctoral dissertation, done 
under the supervision of Paul Garrett.  As was the case with the previous 
paper, it is influenced by discussions with and talks by him.

The author wishes also to acknowledge a referee's advice on making the paper
more interesting to a wider audience.

\subsection*{Dedication.}

To the founder of IEEE--IST Academic (who went on to create IEEE
Academic) and his accomplice in the darkest hour.

\tableofcontents

\section{Setup (isotropic places)}\label{s:setup}

Let us recapitulate, from the previous paper~\cite{Bo}, 
what happens at isotropic places.

Recall we fixed a number field $k$ with adele ring $\ade$, 
and a quadratic form $\langle\;,\;\rangle$ with matrix
\begin{equation*}
  \begin{pmatrix}1&&\\&*&\\&&-1\end{pmatrix}
\end{equation*}
with respect to the decomposition $k^{n+3}=(k\cdot e_+)\oplus
k^{n+1}\oplus(k\cdot e_-)$.  We set $e=e_++e_-$ and named the
following groups of isometries:
\begin{align*}
  G&=\gO(n+3), &&\text{the isometry group of }
    \begin{pmatrix}*&*&*\end{pmatrix};\\
  H&=\gO(n+2), &&\text{the isometry group of }
    \begin{pmatrix}*&*&\phantom{*}\end{pmatrix};\\
  \Th&=\gO(n+1), &&\text{the isometry group of }
    \begin{pmatrix}\phantom{*}&*&\phantom{*}\end{pmatrix}.
\end{align*}
Let $P \subset G$ be the $k$-parabolic stabilizing $k\cdot e$.
The modular function of $P$ is given by $\de_P(p)=\abs t^{n+1}$ when 
$e\cdot p=e/t$.  In particular, $\de_P^s(p)=\abs t^\al$, with 
$\al=(n+1)s$.

We now choose an isotropic place $v$ which, from this point onward, we will
omit whenever possible.  Therefore, $k$ is the local field, $\oo$ is its
ring of integers, $\wg$ is a local uniformizer, and $\wg = q^{-1}$ ($q$ is 
the cardinality of the residue field).

\subsection*{Measure on $\Th\lqu H$}

Choose a hyperbolic pair $x$, $x'$ in $k^{n+2}$ so that 
$e_+\in (k\cdot x) \oplus (k\cdot x')$ and change coordinates so that
the restricted quadratic form has matrix 
\begin{equation*}
  \begin{pmatrix}&&1\\&B&\\1&&\end{pmatrix}
\end{equation*}
with respect to the orthogonal decomposition 
$k^{n+2}=(k\cdot x')\oplus k^n\oplus(k\cdot x)$.

Let $Q\subset H$ be the parabolic stabilizing the line $k\cdot x$; we have
\begin{equation*}
  \int_{\Th\lqu H} \text{function}(h)\dd h
  = \int_{\Th\lqu\Th Q} \text{function}(q)\dd q
  = \int_{(\Th\cap Q)\lqu Q} \text{function}(q)\dd q.
\end{equation*}
Here, $\Th\cap Q=\gO(n)$ is the fixer of $x$ and $x'$.  Set
\begin{equation*}
  m_\la = \begin{pmatrix}\la&&\\&\id&\\&&1/\la\end{pmatrix}
\qquad\text{and}\qquad
  n_a = \begin{pmatrix}1&a&-\frac12B(a)\\&\id&*\\
     \phantom{\frac12B(a)}&&1\end{pmatrix}.
\end{equation*}
With $M_* = \set{m_\la}$, we have
$\Th\cap Q=\set{\psm[.]{1&&\\&*&\\&&1}}$, $N^Q=\set{n_a}$, 
$M^Q=(\Th\cap Q)\cdot M_*$, and $Q=M^Q\cdot N^Q$.
The elements of $(\Th\cap Q)\lqu Q$ can be expressed as $n_a\cdot m_\la$
and $\de_Q(m_\la)=\abs\la^n$.  Moreover,
\begin{equation*}
  \dd{(n_a\cdot m_\la)}
  = \dd a\dd\la
\end{equation*}
(with $\dd\la$ multiplicative and $\dd a$ additive) is a 
\emph{right}-invariant measure.  Therefore, up to a multiplicative constant
independent of the integrand,
\begin{equation}\label{e:quo}
  \int_{\Th\lqu H} \text{function}(h)\dd h
  = \int_{\kx}\int_{k^n} \text{function}(n_a\cdot m_\la)\dd a\dd\la.
\end{equation}

\subsection*{Construction of $\ph_\om$}

We saw in \eqref{e:global} that the local factor is
\begin{equation*}
  \int_{\Th\lqu H} \ph_\om\cdot f,
\end{equation*}
where $f$ generates an unramified principal series; in fact, 
$f(m_\la)=\abs\la^\be$, for some $\be$ (again, we are omitting the place $v$).

We restrict ourselves to the non-archimedean places.

We choose coordinates preserving the decomposition 
$k^{n+2}=(k\cdot x')\oplus k^n\oplus(k\cdot x)$ from above, and let
$K^H \subset H$ be the compact open subgroup stabilizing integral (with 
respect to those coordinates) vectors.  The details of what coordinates
are actually chosen will transpire along the computation.

We want $\ph_\om$ to be associated with $\de_P^s$; that is, if $e\cdot p=e/t$
and $\al=(n+1)s$, then $\ph_\om(p)=\om(t)=\abs t^\al$.  Therefore, with $\Phi$ 
being the characteristic function of $\oo^{n+3}$, we define
\begin{equation*}
  \psi(g)
  = \int_{\kx}\om(t)\cdot\Phi(te\cdot g)\dd t
  \qquad\text{and}\qquad
  \ph_\om
  = \frac{\psi}{\psi(1)}.
\end{equation*}
The measure in $\kx$ is invariant with respect to multiplication normalized
so that $\ox$ has volume $1$.

\subsection*{Some notation}

Let $\abs\wg = q^{-1}$ and $\abs t=q^{-T}$.  We will also use $a=q^{-\al}$, 
$z=q^{-\be}$, and $w=zq^{-1}=q^{-\be-1}$ and write (with the same measure as
just above)
\begin{equation*}
  Z(\al) = \int_{\kx\cap\oo}\abs t^\al\dd t
  = \frac1{1-q^{-\al}}
  = \frac1{1-a}.
\end{equation*}
The integral converges only when $\Re\al$ is sufficiently large, but we will
use $Z(\al)$ to denote the meromorphic continuation.

With $z=q^{-\be}$, we define
\begin{align*}
  X_\ell^B(\rho)
  &=\meas\set{a\in\oo^n:\frac{B(a)-\rho}2=0 \mod\wg^\ell};\\[2\jot]
  X^B(\beta;\rho)
  &=\sum_{\ell\ge0}z^\ell X_\ell^B(\rho);\qquad\text{and}\\
  \Pi^B(\al,\be)
  &= \int_{\kx\cap\oo}\abs t^\al \; X^B(\be;t^2)\dd t.
\end{align*}
(When there is no risk of ambiguity, we suppress $B$ or $\rho$, 
or use $n$ instead of $B$.)

In order to make clear what adaptations are needed at even primes, we must
repeat the following two proofs, with minor adjustments.

\begin{prop*}
Up to a multiplicative constant independent of the integrand,
\begin{equation*}
  \int_{\Th\lqu H} \psi \cdot f
  = \int_{\Th\lqu H}\int_{\kx}
    \abs t^\al \cdot\Phi(te\cdot h)\cdot f(h)\dd t\dd h
  = \Pi^n(\al-\be-n,\be).
\end{equation*}
(This is valid at all non-archimedean primes.)
\end{prop*}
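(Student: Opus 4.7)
The first equality is just Fubini applied to $\psi(g) = \int_\kx \om(t)\,\Phi(te\cdot g)\,\dd t$ together with $\om(t)=\abs t^\al$.

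For the substantive second equality I would proceed exactly as at odd primes, which is presumably what is referred to above by ``repeat with minor adjustments.''  First, unfold using the quotient-measure identity \eqref{e:quo} to rewrite the $h$-integral as an integral over $(a,\la)\in k^n\times\kx$ against $\dd a\,\dd\la$, noting that $f(n_a m_\la)=\abs\la^\be$.  Then compute $e\cdot n_a m_\la$ explicitly: since $h\in H$ fixes $e_-$ and $e_+$ lies in $(k\cdot x)\oplus(k\cdot x')$, a direct matrix computation (with the conventional normalization $\langle e_+,e_+\rangle=1$) yields an expression of the shape
\begin{equation*}
  e\cdot n_a m_\la = \la\cdot x' + a + \frac{1-B(a)}{2\la}\cdot x + e_-,
\end{equation*}
so that $\Phi(te\cdot n_a m_\la)=1$ becomes the conjunction of the four integrality conditions $t\la\in\oo$, $ta\in\oo^n$, $t(1-B(a))/(2\la)\in\oo$, and $t\in\oo$.

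Next I would perform the change of variables $u=ta$ (additive, with Jacobian $\abs t^{-n}$) and $\nu=t\la$ (multiplicative, so $\dd\la$ transports to $\dd\nu$), designed to strip $t$ out of the quadratic geometry.  Using $\abs\la^\be=\abs\nu^\be\abs t^{-\be}$, the support conditions collapse to $u\in\oo^n$, $\nu\in\oo$, $t\in\oo$, and
\begin{equation*}
  \frac{B(u)-t^2}{2}\equiv 0\pmod{\wg^\ell},\qquad \ell=\ord(\nu),
\end{equation*}
which is exactly the defining condition of $X_\ell^B(t^2)$.  Stratifying the $\nu$-integral by $\ell$ then produces $\sum_{\ell\ge0}z^\ell X_\ell^B(t^2)=X^B(\be;t^2)$, while the accumulated $\abs t$-exponent is $\al-n-\be$.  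The outer integral over $t\in\oo\cap\kx$ accordingly yields $\Pi^n(\al-\be-n,\be)$ by the definition of $\Pi^B$.

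The only delicate ingredient is the factor of $2$ sitting in $(B(u)-t^2)/2$: at odd primes $\abs2=1$ and the congruence above is entirely innocuous, but at even primes it is precisely at this point that the adaptations of section~\ref{s:even} must be grafted in (one has to track $\ord(2)$ as a shift in the exponent $\ell$).  Beyond that, the argument is a routine Fubini-and-change-of-variables bookkeeping; the real work of the paper is not in this proposition but in the subsequent evaluation of $X^B$.
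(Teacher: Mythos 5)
Your proposal is correct and follows essentially the same route as the paper: unfold via the quotient measure \eqref{e:quo}, compute $e\cdot n_a m_\la$ explicitly with $e_+ = x' + \tfrac12 x$, change variables to absorb $t$ into $a$ and $\la$ (picking up $\abs t^{-n-\be}$), and stratify the $\la$-integral by $\ord\la=\ell$ to recognize $\sum_\ell z^\ell X^n_\ell(t^2) = X^n(\be;t^2)$. Your closing remark about the factor of $2$ is also consonant with the paper, which builds the division by $2$ into the definition of $X_\ell$ precisely so that this proposition holds uniformly at all non-archimedean places.
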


\begin{proof}
According to \eqref{e:quo}, we have
\begin{equation*}
  \int_{\Th\lqu H}\int_{\kx}
    \abs t^\al \; \Phi(te\cdot h) \; f(h)\dd t\dd h
  = \int_{\kx}\int_{\kx}\int_{k^n}
    \abs t^\al \; \abs\la^\be \; \Phi(te\cdot n_a\cdot m_\la)\dd a\dd\la \dd t.
\end{equation*}

At this point, we specify $e_+=x'+\frac12x$.  Noting that
\begin{equation*}
  e\cdot n_a\cdot m_\la
  = (e_+ + e_-)\cdot n_a\cdot m_\la
  = e_+\cdot n_a\cdot m_\la + e_-,
\end{equation*}
we have (in $k^{n+2} = (k\cdot x') \oplus k^n \oplus (k\cdot x)$)
\begin{equation*}
  e_+\cdot n_a\cdot m_\la
  = \begin{pmatrix}1&0&\tfrac12\end{pmatrix}\cdot n_a\cdot m_\la
  = \begin{pmatrix}\la&a&\tfrac1{2\la}\bigl(1-B(a)\bigr)\end{pmatrix}
\end{equation*}
and (in $k^{n+3} = k^{n+2} \oplus (k\cdot e_-)$)
\begin{equation*}
  te\cdot n_a\cdot m_\la
  = \Bigl(\la t,at,\tfrac1{2\la t}\bigl(t^2-B(at)\bigr),t\Bigr).
\end{equation*}

Therefore, after a change of variables,
\begin{equation*}\begin{split}
  \int_{\Th\lqu H}\psi \cdot f
  &=\int_{\kx}\int_{\kx}\int_{k^n}
      \abs t^{\al-\be-n} \; \abs\la^\be \; \Phi
      \Bigl(\la,a,\tfrac1{2\la}\bigl(t^2-B(a)\bigr),t\Bigr)
      \dd a\dd\la\dd t\\
  &=\int_{\kx\cap\oo}\abs t^{\al-\be-n}
      \int_{\kx\cap\oo}\abs\la^\be
      \int_{\oo^n}\ch_\oo \Bigl(\frac{t^2-B(a)}{2\la}\Bigr)
      \dd a\dd\la\dd t\\
  &=\int_{\kx\cap\oo}\abs t^{\al-\be-n}
      \sum_{\ell\ge0} z^\ell X^n_\ell(t^2)
  = \int_{\kx\cap\oo}\abs t^{\al-\be-n} X^n(\be; t^2).
  \qedhere
\end{split}\end{equation*}
\end{proof}

\begin{prop}\label{p:odd:lpi}
Fix $\ph_\om(p) = \de_P(p)^s = \om(t) = \abs t^\al$ when $e\cdot p = e/t$ and
$\al = (n+1) s$.  Fix also a cuspidal $F$ generating an irreducible 
$\pi = \otimes_v \pi_v$.  Let $f_v$ be a generator of $\pi_v$ normalized vy
$f_v(1) = 1$, and let $\be_v$ be the local parameter of the unramified 
principal series representation $\pi_v$.

The local factor at the non-archimedean place $v$ (ommited in the remainder
of the statement) in the global period $(E_\ph, F)_H$ is
\begin{equation*}
  \int_{\Th\lqu H}\ph_\om\cdot f
  = \frac1{\psi(1)}\int_{\Th\lqu H}\psi\cdot f
  = \frac{\Pi^n(\al-\be-n,\be)}{\abs2^\al \; Z(\al)}
\end{equation*}
up to a multiplicative constant.
\end{prop}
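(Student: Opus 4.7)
The plan is to reduce the entire statement to a computation of $\psi(1)$. The preceding proposition already identifies $\int_{\Th\lqu H}\psi\cdot f$ with $\Pi^n(\al-\be-n,\be)$, and the normalization $\ph_\om=\psi/\psi(1)$ is built into the definition, so the first equality in the statement is immediate. All that remains is to show $\psi(1)=\abs 2^\al\cdot Z(\al)$.

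To compute $\psi(1)=\int_{\kx}\abs t^\al\,\Phi(te)\dd t$, I would reuse the coordinate choice $e_+=x'+\tfrac12 x$ that was made in the preceding proof. In the integral basis adapted to the decomposition $k^{n+3}=(k\cdot x')\oplus k^n\oplus(k\cdot x)\oplus(k\cdot e_-)$, the vector $te=te_++te_-$ has coordinates $\bigl(t,\,0,\,t/2,\,t\bigr)$. Since $\Phi$ is the characteristic function of $\oo^{n+3}$ in these coordinates, the condition $\Phi(te)=1$ reduces to $t/2\in\oo$, i.e. $t\in 2\oo$; the other two slots give the weaker condition $t\in\oo$ and are absorbed.

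Thus $\psi(1)=\int_{\kx\cap 2\oo}\abs t^\al\dd t$. A change of variable $t=2u$, using the fact that multiplicative Haar measure on $\kx$ is translation-invariant, converts this to $\abs 2^\al\int_{\kx\cap\oo}\abs u^\al\dd u=\abs 2^\al\cdot Z(\al)$. Dividing the formula from the preceding proposition by this value yields the claimed expression, up to the same multiplicative constant tracked in \eqref{e:quo}.

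There is no genuine obstacle in this argument; the calculation is essentially bookkeeping in the chosen coordinates. The point worth emphasizing, however, is the role of $\abs 2^\al$. At odd primes $\abs 2=1$, so this factor is invisible and the formula collapses to the one implicit in the preceding paper \cite{Bo}. At even primes $\abs 2<1$, so $\abs 2^\al$ is a genuine correction factor arising from the half in $e_+=x'+\tfrac12 x$, and its presence is precisely the feature that forces the separate treatment of even primes developed in the remaining sections.
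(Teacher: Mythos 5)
Your proposal is correct and follows essentially the same route as the paper: the first equality is immediate from the normalization $\ph_\om=\psi/\psi(1)$ together with the preceding proposition, and $\psi(1)$ is computed by evaluating $\Phi(te)=\Phi(t,0,\tfrac t2,t)$ in the coordinates with $e_+=x'+\tfrac12x$, which forces $t\in2\oo$ and yields $\abs2^\al\,Z(\al)$. Your closing remark on the role of $\abs2^\al$ at even versus odd primes is a nice observation but not part of the argument itself.
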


For the odd prime case, the multiplicative constant was determined in the 
previous paper, but the method does not seem applicable to even primes.  In the 
cases for which we have computed $\Pi$, it depends only on the dimension of
the anisotropic component and Witt index, the discriminant (whether it is a 
unit or is quadratic defect), and (for even primes) the Hasse--Minkowski 
invariant.

\begin{proof}
The multiplicative constant accounts for the normalization implied in the 
integral \eqref{e:quo}.  Additionally,
\begin{equation*}
  \psi(1)
  = \int_{\kx} \abs t^\al \; \Phi(te) \dd t
  = \int_{\kx} \abs t^\al \; \Phi\bigl(t,0,\tfrac t2,t\bigr) \dd t
  = \int_{\kx\cap2\oo} \abs t^\al
  = \abs2^\al \; Z(\al).
  \qedhere
\end{equation*}
\end{proof}

\subsection*{Dimension reduction}

By taking hyperbolic planes away, we can simplify the evaluation of
\thref{p:odd:lpi} significantly.  In fact, if there is a hyperbolic 
subspace with dimension $2k$ and $n=m+2k$, then
\begin{align*}
  X^{m+2k}(\be;\rho)
  &=\frac{Z(\be+1)}{Z(\be+k+1)}\cdot X^m(\be+k;\rho),\\
  \Pi^{m+2k}(\al,\be)
  &=\frac{Z(\be+1)}{Z(\be+k+1)}\cdot\Pi^m(\al,\be+k).
\end{align*}
This is valid at all non-archimedean places.

All that is left to do, is to find the functions $X$ and $\Pi$ for anisotropic
forms.  The odd prime case was discussed in the previous paper.  For even 
primes, we have anisotropic forms in $k^m$ with $m \le 4$.

\section{Even primes}\label{s:even}

The actual computation of $X$ and $\Pi$ (for anisotropic forms) at odd primes
relied substantially on an anisotropy lemma, which guaranteed that certain
equations had no solution modulo $\wg^\ell$.  For even primes, we rely
on a similar lemma.

In all that follows, $e = \ord 2$ is the ramification index and 
$B(x) = \sum_i a_i x_i^2$ is a form with $0 \le \ord a_i \le 1$.

\begin{lem}\label{l:even:aniso}
Let $B(x) = \sum_i a_i x_i^2$ be anisotropic.  Then, for each $x \ne 0$,
\begin{equation*}
  \max_i \abs{a_i x_i^2} \ge \abs{B(x)} \ge \max_i \abs{4 a_i x_i^2}.
\end{equation*}
\end{lem}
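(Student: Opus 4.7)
The upper bound $|B(x)| \le \max_i|a_i x_i^2|$ is just the non-archimedean triangle inequality applied to $B(x)=\sum_i a_i x_i^2$, so the real content of the lemma is the lower bound.

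The plan for the lower bound is to argue by contradiction, using Hensel's lemma to produce a non-trivial isotropic vector. Suppose $|B(x)|<|4 a_{i_0} x_{i_0}^2|$, where $i_0$ achieves the maximum on the right (in particular $a_{i_0}x_{i_0}\ne 0$). The strategy is to perturb the $i_0$-th coordinate of $x$ to a nearby $t\in\kx$ so that $B$ vanishes on the perturbed vector, contradicting anisotropy.

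Concretely, writing $B(x)=a_{i_0}x_{i_0}^2+C$ with $C=\sum_{j\ne i_0}a_j x_j^2$, I would solve $a_{i_0}t^2+C=0$ by extracting a square root of $u:=-C/a_{i_0}$. A direct computation gives
\[
  \frac{u}{x_{i_0}^2}-1 \;=\; -\,\frac{B(x)}{a_{i_0}x_{i_0}^2},
\]
whose valuation exceeds $\ord 4 = 2e$ by the contradiction hypothesis, so $u/x_{i_0}^2 \in 1+\wg^{2e+1}\oo$. Applying Hensel's lemma to $T^2 - u/x_{i_0}^2$ with initial guess $T_0 = 1$ (where $|1 - u/x_{i_0}^2| \le q^{-2e-1} < q^{-2e} = |2T_0|^2$) then produces a square root $s \in 1 + \wg^{e+1}\oo$, in particular a unit. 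Setting $t := s x_{i_0} \ne 0$ gives $a_{i_0}t^2 + C = 0$, so the vector obtained from $x$ by replacing $x_{i_0}$ by $t$ is a nonzero isotropic vector for $B$ --- the desired contradiction.

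The only point that requires care, and the reason for the factor of $4$ in the statement, is that the slack $|B(x)| < |4 a_{i_0} x_{i_0}^2|$ translates exactly into the Hensel condition $|1 - u/x_{i_0}^2| < |2|^2$ needed to lift a root from $T_0 = 1$. At odd primes $|2| = 1$, which both explains why the previous paper obtained the sharper equality $|B(x)| = \max_i |a_i x_i^2|$ and shows that at even primes no constant smaller than $4$ can be achieved by this approach.
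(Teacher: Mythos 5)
Your proof is correct and is essentially the paper's argument: the upper bound is the ultrametric inequality, and the lower bound is proved by contradiction by Hensel-lifting the dominant coordinate to produce an exact nonzero isotropic vector. The only (cosmetic) difference is packaging --- the paper obtains the contradiction as a corollary of its multi-variable refinement lemma~\thref{l:even:sol} (which it needs anyway for the counting identities), whereas you reduce directly to a one-variable square-root extraction via the local square theorem, with the same threshold $\abs{1-u/x_{i_0}^2}<\abs{4}$ accounting for the factor $4$.
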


Were that not the case, then the following lemma, with $\rho = B(x)$, would
yield a nonzero solution of $B(x) = 0$.

\begin{lem}\label{l:even:sol}
Let $B(x) = \sum_i a_i x_i^2$ be a form satisfying $0 \le \ord a_i \le 1$.
If there is a nonzero $x \in \oo^n$ and $\ell > \ord(4a_ix_i^2)$ (for 
some $i$) such that
\begin{equation*}
  B(x) = \rho \mod \wg^\ell,
\end{equation*}
and if $\abs{a_j x_j} = \max_i \abs{a_i x_i}$, then
there is $y \in x + (2a_jx_j)^{-1}\wg^{\ell+1}\oo^n$ such that $B(y) = \rho$.  
In fact, $X^B_{\ell+1}(\rho) = \abs\wg \; X^B_\ell(\rho)$ if 
$\ell > \ord(2\rho)$.
\end{lem}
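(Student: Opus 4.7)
The plan is a Hensel-style lifting that reduces the problem to a one-variable quadratic equation in the distinguished coordinate~$j$. I would write $y = x + \de\,e_j$, so that $B(y) = \rho$ becomes
\begin{equation*}
  (B(x) - \rho) + 2a_jx_j\,\de + a_j\,\de^2 = 0.
\end{equation*}
Dividing through by $2a_jx_j$ recasts this as the fixed-point problem $\de = r - \de^2/(2x_j)$, with $r = (\rho - B(x))/(2a_jx_j)$ and $\ord(r) \ge \ell - \ord(2a_jx_j)$.

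The solution would be produced by successive approximation: set $\de_0 = 0$ and $\de_{n+1} = r - \de_n^2/(2x_j)$. The successive differences factor as $\de_{n+1} - \de_n = -(\de_n - \de_{n-1})(\de_n + \de_{n-1})/(2x_j)$, and the first one is $\de_2 - \de_1 = -r^2/(2x_j)$. The hypothesis $\ell > \ord(4a_jx_j^2) = \ord(2a_jx_j) + \ord(2x_j)$ translates exactly into $\ord(r) > \ord(2x_j)$, which forces each successive difference to have strictly larger order. Hence the iteration converges in $\oo$ to a solution $\de_\infty$ with $\ord(\de_\infty) = \ord(r)$, placing $y - x$ in the prescribed neighborhood. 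Since $\ord(\de_\infty) > 0$, the lifted $y$ still has $\abs{a_jy_j}$ maximal, so the setup remains self-consistent.

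For the counting statement $X^B_{\ell+1}(\rho) = \abs\wg\,X^B_\ell(\rho)$, the plan is to combine this lifting with Lemma~\thref{l:even:aniso}. Applied to the hypothesis $\ell > \ord(2\rho)$, anisotropy gives $\ord(a_jx_j^2) \le \ord(\rho)$ at the maximizer~$j$, whence $\ord(4a_jx_j^2) \le 2e + \ord(\rho) < \ell + e$. Since $X^B_\ell$ records the measure of solutions modulo $\wg^{\ell+e}$, this is precisely the hypothesis of the first part (with $\ell$ replaced by $\ell + e$). Uniqueness of the Hensel lift then puts level-$\ell$ approximate solutions in measure-preserving bijection with actual zeros of $B - \rho$; comparing level~$\ell$ and level~$\ell+1$ reduces to expanding $B(x + \wg^m z) - \rho$ and identifying the codimension-one linear condition on $z \bmod \wg$ cut out by the cross term $2\wg^m\sum_i a_ix_iz_i$, which accounts for the factor $\abs\wg = q^{-1}$.

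The main obstacle is the bookkeeping when $\ord(a_j) = 1$. The naive Hensel criterion $\abs{f(0)} < \abs{f'(0)}^2$ applied to $f(\de) = a_j\de^2 + 2a_jx_j\de + (B(x)-\rho)$ is too weak by the factor $\abs{a_j}$; the remedy of dividing by $2a_jx_j$ before iterating is what makes the stated hypothesis $\ell > \ord(4a_jx_j^2)$ (rather than $\ell > \ord(4a_j^2x_j^2)$) sufficient. In the counting step, one must also choose the perturbation scale $m$ so that the cross term dominates the quadratic remainder $\wg^{2m}B(z)$; this is where the ramification index $e$ (and the distinction between $\wg^\ell$ and $2\wg^\ell$ in the definition of $X^B_\ell$) genuinely enters.
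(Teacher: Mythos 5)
Your argument is correct in substance but organized differently from the paper's. For the existence of an exact lift, you perturb only the distinguished coordinate and run a genuine one\nobreakdash-variable fixed-point iteration $\de\mapsto r-\de^2/(2x_j)$; the paper instead perturbs all coordinates at once, writing $y=x+ru$ with $r=(2a_j)^{-1}\wg^{\ell+1}$ after first normalizing to $\abs{x_j}=1$ by pulling the content out of $x$, shows the quadratic terms $r^2a_iu_i^2$ are negligible and the linear term $2ra_jx_ju_j$ sweeps residues, and obtains the exact solution as a limit of successive refinements. Your division by $2a_jx_j$ neatly replaces the paper's normalization step and makes the contraction estimate transparent, and you correctly identify that this is what lets the hypothesis involve $\ord(4a_jx_j^2)$ rather than $\ord(4a_j^2x_j^2)$. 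The trade-off is that the paper's multivariable version delivers the counting statement for free: ``exactly $\abs\wg$ of the choices of $u_j$ refine the solution'' is read off from the same expansion that proves existence. Your counting step is the thinnest part: the phrase about a measure-preserving bijection with the actual zero locus does not parse (that locus is null), although the argument you actually run --- the cross term $2\wg^m a_jx_jz_j$ cuts out a density-$q^{-1}$ affine condition modulo the next power while the quadratic remainder is negligible --- is exactly the paper's. To close it you must verify negligibility of $\wg^{2m}a_iz_i^2$ for \emph{every} $i$, not just $i=j$; this is where the standing hypothesis $0\le\ord a_i\le1$ genuinely enters (it is the paper's estimate $\abs{\wg a_i/a_j}\le1$), and it is the one detail your sketch defers rather than settles.
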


\begin{proof}
This follows from some versions of Hensel's lemma.  We prove only the exact
details we need in the continuation, as we will rely on specifics of the
dyadic case.

Choose the highest $H \ge 0$ such that $x \in \wg^H\oo^n$ and 
write $x = \wg^H x'$, $y = \wg^H y'$, $\rho = \wg^{2H} \rho'$, and 
$\ell = 2H + \ell'$.  Replacing $x$, $y$, $\rho$, and $\ell$ by $x'$, $y'$,
$\rho'$, and $\ell'$, both in the statement and in the conclusions, we see
that we need address only the case $H=0$, that is, the case $\abs{x_j} = 1$.

Write $r = (2a_j)^{-1}\wg^{\ell+1}$ and $y = x + ru$, leaving
$u \in \oo^n$ unspecified.  We have
\begin{equation*}
  \sum_i a_i y_i^2
  = \sum_i a_i (x_i + r u_i)^2
  = \sum_i a_i x_i^2 + \sum_i 2 r a_i x_i u_i + \sum_i r^2 a_i u_i^2.
\end{equation*}
Because $\abs{a_j} = \abs{a_j x_j^2} = \max_i \abs{a_i x_i^2}$, we have
$\abs{4a_j} = \abs{4a_jx_j^2} \ge \abs{4a_ix_i^2} > \abs{\wg^\ell}$, so
\begin{equation*}
 \abs{r^2 a_i u_i^2}
 = \abs{(2a_j)^{-2} \wg^{2\ell+2} a_i u_i^2}
 = \abs*{\frac{\wg^\ell}{4a_j} \frac{\wg a_i}{a_j} \wg^{\ell+1} u_i^2}
 < \abs{\wg^{\ell+1}},
\end{equation*}
Therefore, none of the $r^2 a_i y_i^2$ summands contributes 
modulo $\wg^{\ell+1}$.  On the other hand, from
$\abs{a_ix_i} \le \abs{a_j}$ we obtain
\begin{equation*}
  \abs{2r a_i x_i u_i}
  = \abs{a_j^{-1} \wg^{\ell+1} a_i x_i u_i}
  = \abs*{\frac{a_i x_i}{a_j} \wg^{\ell+1} u_i}
  \le \abs{\wg^{\ell+1}},
\end{equation*}
with equality (at least) when $i=j$ and $\abs{u_j} = 1$.

That means that, no matter the choice for the other $u_i$, we can use $u_j$ 
to control the value of $B(y)$ modulo $\wg^{\ell+1}$.  In other words, for 
exactly $\abs\wg$ (that is, one $q$th) of the choices of $u_j$ (corresponding 
to $\abs\wg$ the possible refinements of $x$), we obtain
\begin{equation*}
  B(y) = \rho \mod \wg^{\ell+1},
\end{equation*}
a refinement of our original equation.  Taking a limit, we 
obtain the desired solution.

If we know $\ell > \ord(2\rho)$, we need no specifics on the values of $x_i$,
so we can conclude $X^B_{\ell+1}(\rho) = \abs\wg \; X^B_\ell(\rho)$.  The
apparent mismatch between this statement and what was done above is due
to the definition of $X_\ell$ involving an equation modulo $2\wg^\ell$.
\end{proof}

With $T = \ord t \ge 0$, suppose now that $\rho = 4t^2 \ne 0 \bmod 2\wg^\ell$
(so, $\ell - 2T > e$), and that $x$ is a solution of 
$B(x) = \rho \bmod 2\wg^\ell$.  Then, according to lemma~\thref{l:even:aniso}, 
$\abs{4t^2} \ge \max_i \abs{4a_i x_i^2}$; hence, $\abs t \ge \abs{x_i}$ and 
$x \in t\oo^n$.  Therefore,
\begin{equation*}\begin{split}
  X_\ell(4t^2)
  &= \meas\set{tx \in t\oo^n : B(tx) = 4t^2 \mod 2\wg^\ell}\\
  &= \abs t^n \meas\set{x \in \oo^n : B(x) = 4 \mod 2\wg^{\ell-2T}}
  = \abs{\wg^n}^T X_{\ell-2T}(4).
\end{split}\end{equation*}
Still with $\ell > 2T + \ord2$, we also have, by the same reasoning,
\begin{equation*}
  X_\ell(0)
  = \meas\set{tx \in t\oo^n : B(tx) = 4t^2 \mod 2\wg^\ell}
  = \abs{\wg^n}^T X_{\ell-2T}(0).
\end{equation*}
Finally, observing that $4t^2 = 0 \bmod 2\wg^\ell$ for $\ell \le 2T + e$
and using $z = q^{-\be}$ and $u = z^2q^{-n}$, we obtain
\begin{equation*}\begin{split}
  X(\be; 4t^2) - X(\be; 0)
  &= \sum_{\ell > 2T + e} z^\ell X_\ell(4t^2)
     - \sum_{\ell > 2T + e} z^\ell X_\ell(0)\\
  &= z^{2T} \abs{\wg^n}^T \sum_{k > e} z^k\bigl(X_k(4) - X_k(0)\bigr)\\
  &= u^T \sum_{k \ge 0} z^k\bigl(X_k(4) - X_k(0)\bigr)
  = u^T \bigl(X(\be; 4) - X(\be; 0)\bigr).
\end{split}\end{equation*}
We have thus proven
\begin{equation*}
  X(\be; 4\wg^{2T}) = X(\be; 0) + u^T X(\be; 4) - u^T X(\be; 0),
\end{equation*}
which leads to this conclusion:

\begin{prop}\label{p:even:Pi}
If $u = z^{-2\be - n}$ and $a = q^{-\al}$, then
\begin{equation*}
  \Pi(\al, \be)
  = \sum_{0 \le T < e} a^T X(\be; \wg^{2T})
    + \frac{\abs2^\al}{1-au} X(\be; 4)
    + \frac{\abs2^\al (a-au)}{(1 - a) (1 - au)} X(\be; 0).
\end{equation*}
\end{prop}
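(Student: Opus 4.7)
My plan is to unpack the definition
\[ \Pi(\al,\be) = \int_{\kx\cap\oo}\abs t^\al \; X(\be;t^2)\dd t \]
by organizing the integration according to $T = \ord t \ge 0$. Since multiplicative Haar measure is normalized so that $\ox$ has measure $1$, each shell $\{t : \ord t = T\}$ also has measure $1$, and $\abs t^\al = a^T$ on that shell. So
\[ \Pi(\al,\be) = \sum_{T \ge 0} a^T \int_{\ord t = T} X(\be; t^2)\dd t, \]
and the task reduces to identifying each shell integral.

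For $T \ge e$ I would write $t = 2 t'$ with $\ord t' = T - e \ge 0$, so that $t^2 = 4\, t'^2$, and then apply the identity
\[ X(\be; 4\, t'^2) = X(\be; 0) + u^{T-e}\bigl[X(\be;4) - X(\be;0)\bigr] \]
established in the display just before the proposition. The crucial point is that the right-hand side depends only on $T-e = \ord t'$, not on the unit part of $t'$, so this value is constant on the shell and equals $\int_{\ord t = T} X(\be; t^2)\dd t$. For $0 \le T < e$ I simply keep the notation $X(\be; \wg^{2T})$ for the corresponding shell integral, matching the notation in the statement.

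Splitting the $T$-sum at $T = e$ and re-indexing $S = T - e$ on the tail, the $T \ge e$ piece becomes
\[ a^e \sum_{S \ge 0} a^S \; X(\be;0) + a^e \sum_{S \ge 0} (au)^S \bigl[X(\be;4) - X(\be;0)\bigr] = \frac{a^e}{1-a}\,X(\be;0) + \frac{a^e}{1-au}\bigl[X(\be;4) - X(\be;0)\bigr], \]
two convergent geometric series in the appropriate domain. Consolidating the $X(\be;0)$ contribution via
\[ a^e\Bigl(\frac1{1-a} - \frac1{1-au}\Bigr) = \frac{a^e(a - au)}{(1-a)(1-au)} \]
and finally using $a^e = q^{-\al e} = \abs2^\al$ (because $\abs2 = q^{-e}$), I obtain the claimed closed form, with the $T<e$ part left as the explicit sum.

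The genuinely substantive work --- controlling $X(\be; 4t^2)$ as a function of $T$ alone --- is already done in the material preceding the statement; for this proposition the only thing left is careful bookkeeping. The one place I expect to slow down is recognizing $a^e = \abs2^\al$ and reassembling the $X(\be;0)$ coefficient so that the two geometric-series contributions collapse into the form shown.
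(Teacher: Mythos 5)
Your proposal is correct and follows essentially the same route as the paper: split the sum over $T=\ord t$ at $T=e$, apply the identity $X(\be;4\wg^{2T})=X(\be;0)+u^{T}\bigl(X(\be;4)-X(\be;0)\bigr)$ established just before the proposition, and sum the resulting geometric series, with the factor $a^{e}=\abs2^{\al}$ accounting for the shift. The paper's proof is exactly this bookkeeping, written more tersely.
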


\begin{proof}
We have
\begin{equation*}
  \Pi(\al, \be)
  = \sum_{0 \le T < e} a^T X(\be; \wg^{2T})
    + \sum_{T\ge0} a^{T+e} X(\be; 4\wg^{2T}).
\end{equation*}
But
\begin{equation*}\begin{split}
  \sum_{T\ge0} a^T X(\be; 4\wg^{2T})
  &= \sum_{T\ge0} a^T X(\be; 0)
    + \sum_{T\ge0} (au)^T X(\be; 4)
    - \sum_{T\ge0} (au)^T X(\be; 0)\\
  &= \frac{1}{1 - au} X(\be; 4) + \frac{(a - au)}{(1 - a)(1 - au)} X(\be; 0).
  \qedhere
\end{split}\end{equation*}
\end{proof}

Combining this proposition with lemma~\thref{l:even:sol}, we see that only
finitely many values $X_\ell(t^2)$ need be computed.

Indeed, choose $t$ with $1 \ge \abs t = q^{-T} \ge \abs2$
(this argument works for any $T \ge 0$).  If $\ell > \ord(2t^2) = 2T + e$ 
(so, at least for $\ell > \ord8$) and $k \ge 0$, the lemma tells us that 
$X_{\ell+k}(t^2) = \abs\wg^k X_\ell(t^2)$.  Therefore,
\begin{equation*}
  X(\be; t^2)
  = \sum_{0 \le \ell \le 2T + e} z^\ell X_\ell(t^2)
    + \sum_{k \ge 0} z^{2T+e+k+1} \abs\wg^k X_{2T+e+1}(t^2),
\end{equation*}
which, with $w=zq^{-1}$, simplifies to
\begin{equation*}
  X(\be; t^2)
  = \sum_{0 \le \ell < 2T + e + 1} z^\ell X_\ell(t^2)
    + \frac{z^{2T+e+1}}{1 - w} X_{2T+e+1}(t^2).
\end{equation*}

The anisotropy lemma~\thref{l:even:aniso} yields a similar reduction for
$X_\ell(0)$: if $x \in \oo^n$ and $B(x) = 0 \bmod 2\wg^\ell$, then it must be
that $\abs{2\wg^\ell} \ge \max_i\abs{4a_ix_i^2}$, or 
$\abs{2a_ix_i^2} \le \abs{\wg^\ell}$.  If 
$\abs{\wg^\ell} = \abs{2\wg^{2k+1}}$ or $\abs{\wg^\ell} = \abs{2\wg^{2k}}$,
we may rely on $\abs{x_i}\le\abs{\wg^k}$, and in either case
\begin{equation*}
  X_\ell(0)
  = \meas\set{\wg^k x \in \wg^k\oo^n : B(\wg^k x) = 0 \mod 2\wg^\ell}
  = \abs{\wg^n}^k X_{\ell-2k}(0),
\end{equation*}
leading us to
\begin{equation*}\begin{split}
  \sum_{\ell \ge e} z^\ell X_\ell(0)
  &= \sum_{k \ge 0} z^{e+2k} X_{e+2k}(0)
    + \sum_{k \ge 0} z^{e+2k+1} X_{e+2k+1}(0)\\
  &= z^e \sum_{k \ge 0} z^{2k} \abs{\wg^n}^k X_e(0)
     + z^{e+1} \sum_{k \ge 0} z^{2k} \abs{\wg^n}^k X_{e+1}(0).
\end{split}\end{equation*}
Using again $u=z^2 q^{-n}=q^{-2\be-n}$, we obtain
\begin{equation*}
  X(\be; 0)
  = \sum_{0 \le \ell < e} z^\ell X_\ell(0)
    + \frac{z^e}{1 - u} X_e(0) + \frac{z^{e+1}}{1 - u} X_{e+1}(0).
\end{equation*}

In summary:

\begin{prop}\label{p:even:X}
If $z=q^{-\be}$, $w=zq^{-1}$, $u=z^2q^{-n}$, and $T \ge 0$, then
\begin{align*}
  X(\be; \wg^{2T})
  &= \sum_{0 \le \ell < 2T + e + 1} z^\ell X_\ell(\wg^{2T})
    + \frac{z^{2T+e+1}}{1 - w} X_{2T+e+1}(\wg^{2T});\\
  X(\be; 0)
  &= \sum_{0 \le \ell < e} z^\ell X_\ell(0)
    + \frac{z^e}{1 - u} X_e(0)
    + \frac{z^{e+1}}{1 - u} X_{e+1}(0).
\end{align*}
\end{prop}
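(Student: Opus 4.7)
The plan is to prove both formulas by splitting the defining series $X(\be;\rho)=\sum_{\ell\ge 0}z^\ell X_\ell(\rho)$ at the index where the anisotropy machinery forces the tail coefficients to stabilize into a geometric progression: the head contributes the finite polynomial piece and the tail collapses to the single rational term on the right. The two cases are driven by different stabilizations, namely lemma~\thref{l:even:sol} for $\rho=\wg^{2T}$ (where $\ord(2\rho)$ is finite) and lemma~\thref{l:even:aniso} for $\rho=0$, but the overall shape of the argument is the same.

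For $\rho=\wg^{2T}$, first I would note that $\ord(2\rho)=2T+e$, so lemma~\thref{l:even:sol} gives $X_{\ell+1}(\wg^{2T})=\abs\wg\,X_\ell(\wg^{2T})$ for every $\ell>2T+e$. Iterating from $\ell=2T+e+1$ produces $X_\ell(\wg^{2T})=\abs\wg^{\ell-(2T+e+1)}X_{2T+e+1}(\wg^{2T})$, so the tail $\sum_{\ell\ge 2T+e+1}z^\ell X_\ell(\wg^{2T})$ collapses to a geometric series in $w=zq^{-1}$ and sums to $\frac{z^{2T+e+1}}{1-w}X_{2T+e+1}(\wg^{2T})$, which combined with the head $\sum_{0\le\ell<2T+e+1}z^\ell X_\ell(\wg^{2T})$ is the first identity.

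For $\rho=0$ lemma~\thref{l:even:sol} is vacuous, so I would use the anisotropy lemma~\thref{l:even:aniso} instead: any $x\in\oo^n$ with $B(x)\equiv 0\pmod{2\wg^\ell}$ satisfies $\max_i\abs{4a_ix_i^2}\le\abs{2\wg^\ell}$, and the hypothesis $0\le\ord a_i\le 1$ lets me conclude that $x\in\wg^k\oo^n$ as soon as $\ell\in\{e+2k,\,e+2k+1\}$. Rescaling $x=\wg^k y$ then yields $X_{e+2k}(0)=\abs\wg^{nk}X_e(0)$ and $X_{e+2k+1}(0)=\abs\wg^{nk}X_{e+1}(0)$. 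Splitting the tail $\sum_{\ell\ge e}z^\ell X_\ell(0)$ by the parity of $\ell-e$ thus gives two geometric series in $u=z^2q^{-n}$, summing to $\frac{z^e}{1-u}X_e(0)+\frac{z^{e+1}}{1-u}X_{e+1}(0)$, which is the second identity.

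The only genuinely delicate step is the parity analysis for $\rho=0$: one must check, using the bounds $0\le\ord a_i\le 1$, that the implication $\abs{2a_ix_i^2}\le\abs{\wg^\ell}\Rightarrow\abs{x_i}\le\abs{\wg^k}$ is sharp enough that exactly $X_e(0)$ and $X_{e+1}(0)$ appear as the two base cases, with the self-similarity jump being exactly $2k$ in both parities. Once that is in place, both identities reduce to routine geometric summation.
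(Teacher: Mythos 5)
Your proposal is correct and follows essentially the same route as the paper: split $X(\be;\rho)=\sum_\ell z^\ell X_\ell(\rho)$ at the stabilization index, use the final claim of lemma~\thref{l:even:sol} to make the tail for $\rho=\wg^{2T}$ geometric in $w$ from $\ell=2T+e+1$ onward, and use lemma~\thref{l:even:aniso} together with $0\le\ord a_i\le1$ to get $x\in\wg^k\oo^n$ and hence $X_{e+2k+\epsilon}(0)=\abs{\wg^n}^kX_{e+\epsilon}(0)$ for $\epsilon\in\{0,1\}$, so the $\rho=0$ tail splits by parity into two geometric series in $u$. The parity check you flag as delicate is exactly the step the paper carries out, and your bound $2\ord x_i\ge \ell-e-\ord a_i\ge 2k-1$ forcing $\ord x_i\ge k$ is the right justification.
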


We note that many of these values are repeated.  For example, if $\ell \le e$,
then $X_\ell(0) = X_\ell(4)$.  Additionally, if $\min_i \abs{a_i} = 1$
(that is, all coefficients of the diagonal quadratic form are units), then 
the anisotropy lemma~\thref{l:even:aniso} implies 
$X_{e+1}(0) = \abs{\wg^n} X_{e-1}(0)$.

In practice, what we shall do is determine $X(\be; \wg^{2T})$ for all $T$
when it takes no more effort than to do so only for $T \le e$, or resort
to proposition~\thref{p:even:X} otherwise.

\section{Conics in dyadic fields}\label{s:dyadic}

The computation of each $X_\ell(t^2)$ amounts to counting points modulo
$2\wg^\ell$ in conics.  We discuss some preliminaries first.

We rely substantially on O'Meara's~\cite[\S63]{Om} discussion of the quadratic 
defect in dyadic fields.  We recall some of the relevant facts.
The quadratic defect of $\rho$ is the intersection of all ideals $b\oo$ for 
$b$ such that $\rho - b$ is a square.  If $b\oo$ is the quadratic defect of
$\rho$, then $\eta^2 b\oo$ is the quadratic defect of $\eta^2\rho$.
If $\ord\rho$ is odd, then the quadratic defect of $\rho$ is $\rho\oo$.  If 
$\ord\rho$ is even, then the quadratic defect of $\rho$ is $0$ (if $\rho$ is 
a square) or $4\rho\oo$, or $\rho\wg^{2k+1}\oo$ with $0 \le k < e$.
If $\rho = \eta^2 + b$ is a unit and $0 < \ord b = 2k + 1 < 2e$ or 
$0 < \ord b = 2k < 2e$, then the quadratic defect of $\rho$ is $\wg^{2k+1}\oo$.
The quotient of two units with quadratic defect $4\oo$ is a square.  (Hence,
half the units of the form $\eta^2 + b$ with $\ord b = 2e$ are squares, and
the other half have quadratic defect $4\oo$.)  If $\rho = \eta^2 + b$ is a
unit and $\ord b > 2e$, then $\rho$ is a square.

We recall that, for fixed dimension $m$, a form is classified by its 
discriminant~$\De$ (we include the sign $(-1)^{\lfloor m/2\rfloor}$ in
its definition) and its Hasse--Minkowski invariant, built from the Hilbert
symbol $(\;, \;)$.

\begin{lem}\label{l:dyadic:hmi}
Fix a non-square unit $\De$.

If the quadratic defect of $\De$ is $4\oo$, then $(a, \De) = (-1)^{\ord a}$.

Otherwise, there is a unit $a$ with quadratic defect $\wg\oo$ such that
$(a, \De) = -1$.
\end{lem}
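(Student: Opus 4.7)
The plan is to interpret $(a,\De) = 1$ as saying $a$ lies in the norm group $N(K^\times)$ where $K = k(\sqrt{\De})$, and to read off the ramification of $K/k$ from the defect of $\De$. For Part 1, with $\De$ of defect $4\oo$, write $\De = \eta^2(1+4u)$ for a unit $u$ and set $\beta = (\sqrt{1+4u}-1)/2$; then $\beta^2+\beta = u$, and because $\De$ is not a square, $\bar u$ lies outside the image of $v\mapsto v^2+v$ in the residue field $\mathbb{F}_q$, so $X^2+X-\bar u$ is irreducible. Hence $K = k(\beta)$ is the unramified quadratic extension, whose norm group is $\ox\cdot\langle\wg^2\rangle$ (the residue-field norm $\mathbb{F}_{q^2}^\times \to \mathbb{F}_q^\times$ is surjective, and norms of uniformizers have $k$-valuation $2$). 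This yields $(a,\De) = (-1)^{\ord a}$.

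For Part 2, with defect $\wg^{2k+1}\oo$ for some $0\le k<e$, an analogous computation --- now $\sqrt{\De/\eta^2}-1$ has half-integer $k$-valuation --- shows that $K/k$ is ramified of index $2$, so $N(\pi_K)$ has odd $k$-valuation and the norm group meets $\ox$ in a subgroup of index $2$. Thus the character $\chi_\De = (\cdot,\De)|_{\ox}$ is non-trivial. Applying Part 1 with $\De$ replaced by any defect-$4\oo$ unit $u$, and using symmetry of the Hilbert symbol, gives $(u,\De) = (\De,u) = (-1)^{\ord\De} = 1$. So $\chi_\De$ vanishes on defect-$4\oo$ units and on squares, and its non-triviality must be realized on some defect-$\wg^{2j+1}\oo$ unit with $j\ge 0$.

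To conclude, I would reduce the defect index to $j=0$ by showing that every defect-$\wg^{2j+1}\oo$ unit is a product of two defect-$\wg\oo$ units modulo squares. Given a representative $a \equiv 1+c\wg^{2j+1}$ modulo squares with $j\ge 1$, the product $a(1+\wg) = 1+\wg+O(\wg^{2j+1})$ has defect exactly $\wg\oo$, so $a \equiv [a(1+\wg)] \cdot (1+\wg)^{-1}$ is a product of two defect-$\wg\oo$ factors. The main obstacle is precisely this generation step: it relies on a mod-$\wg$ calculation using $\ord 2 \ge 1$ to ensure that $a(1+\wg)$ really has defect $\wg\oo$ and not some larger power of $\wg$. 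Granting it, were $\chi_\De$ trivial on every defect-$\wg\oo$ unit it would extend trivially across all of $\ox/(\ox)^2$, contradicting Part 2; hence some defect-$\wg\oo$ unit $a$ satisfies $(a,\De) = -1$.
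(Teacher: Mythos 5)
Your argument is correct, but it follows a genuinely different route from the paper's. The paper simply cites O'Meara for the first claim, and for the second it constructs $a$ explicitly and verifies by hand that $ax^2+\De y^2$ is anisotropic, splitting into three cases: $d=1$ unramified (choose $u$ outside the value set of the separable polynomial $t-vt^2$ over the residue field), $d>1$ (any $a=1+\wg u$ works, by an odd-valuation argument), and $d=1$ ramified (take $a=\De$). You instead prove the first claim by identifying $k(\sqrt\De)$ as the unramified quadratic extension via Artin--Schreier reduction, and you prove the second non-constructively: ramification of $k(\sqrt\De)/k$ plus the norm-index theorem forces $(\,\cdot\,,\De)$ to be nontrivial on $\ox$, it kills squares and defect-$4\oo$ units by the first part and symmetry, and your factorization $a\equiv\bigl[a(1+\wg)\bigr]\cdot(1+\wg)^{-1}$ pushes the nontriviality down to a defect-$\wg\oo$ unit (the step you flag as the obstacle is in fact fine: for $j\ge1$ one has $a(1+\wg)=1+\wg\cdot\mathrm{unit}$, and a unit of the form $1+b$ with $\ord b$ odd and $<2e$ has defect exactly $b\oo$). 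The trade-off: your proof is uniform across all ramified cases and more conceptual, but it imports local class field theory (index $2$ of the norm group, equivalently non-degeneracy of the Hilbert symbol) and does not exhibit $a$; the paper's proof is self-contained given O'Meara's quadratic-defect facts and, more importantly, its unramified case produces as a byproduct the explicit criterion ``$(1+2u,1+2v)=-1$ iff $ux^2+xy+vy^2=0\bmod2$ has only the trivial solution,'' which the rest of the paper (Lemma \thref{l:dyadic:proj} and the second method) relies on directly.
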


\begin{proof}
The first claim is proved by O'Meara~\cite[\S63]{Om}.

For the second claim, let $\wg^d\oo$ be the quadratic defect of 
$\De = 1+\wg^d v$, as any other unit $\De$ with the same quadratic defect may 
be obtained with a change of variable in $y$.  Write $a = 1 + \wg u$, 
with $u$ a unit.  We want to show
\begin{equation*}
  ax^2 + \De y^2
  = x^2 + \wg ux^2 + y^2 + \wg^d vy^2
  = (x + y)^2 - 2xy + \wg ux^2 + \wg^d vy^2
\end{equation*}
is never a square, unless $x = y = 0$.  If $\abs x \ne \abs y$, then the 
quadratic defect of the sum is the largest of $\wg^d y^2\oo$ and $\wg x^2\oo$.  
Therefore, we need only check the case $\abs x = \abs y$.

In the unramified case, use $\wg = 2$ and $d = 1$.  Let also $y = xt$.
Then we want to choose $u$ so that
\begin{equation*}
  ax^2 + \De y^2
  = (x + xt)^2 + 2x^2(u - t + vt^2)
\end{equation*}
is never a square, or, which is the same, so that $u - t + vt^2 \ne 0 \bmod2$
(were there any solutions of the latter equation, then we could refine at least
one of the two so as to obtain a solution of the former).

But $t - vt^2$ is a separable quadratic polynomial, so in a finite field
it takes only half the possible values, and we may choose for $u$ any of the
values it does not take.  The same reasoning shows that 
$ux^2 - xy + vy^2 = 0 \bmod 2$ has only one solution ($x = y = 0 \bmod2$) if
and only if $(1+2u, 1+2v) = -1$.

In the ramified case with $d > 1$, we see 
$\ord(- 2xy + \wg ux^2 + \wg^d vy^2) = \ord(\wg ux^2)$ is odd,
so $ax^2 + \De y^2$ is, indeed, never a square.

In the ramified case with $d = 1$, we may choose $a = \De$.  Indeed, the 
reasoning above shows that $(\De, -1) = -1$ whenever $d < e$, so 
$(\De, \De) = (\De, -1) = -1$.
\end{proof}

Here, we point out that if $u$ is a unit, to say $1 + 4u$ is not a square 
is to say there is no unit $v$ such that $(1 + 2v)^2 = 1 + 4v + 4v^2 = 1 + 4u$,
which is to say $v + v^2 - u = 0$ is impossible, or $(1+2u, -1) = -1$ in
the unramified case.

\subsection*{The first method}

In its crudest form, the question we wish to answer is how many solutions 
there are to
$x^2 = \rho \bmod \wg^\ell$.  Clearly, there are any if and only if $\rho$
is a square modulo $\wg^\ell$.  Most often, the number of solutions does not
depend further on $\rho$; in fact, if $\rho$ is not a square, then the second 
case listed below does not occur.

\begin{lem}\label{l:dyadic:main}
Let $X = \meas\{x \in \oo : x^2 = \rho \bmod \wg^\ell\}$, where $\rho \in \oo$ 
and $\ell \ge 0$.  Write $\rho = \eta^2 + b$, where $b\oo$ is the quadratic 
defect of $\rho$.

If $b \ne 0 \bmod \wg^\ell$, then $X = 0$.

If $b = 0 \bmod \wg^\ell$ and $\abs{\wg^\ell} < \abs{4\eta^2}$, then
$X = 2 \, \abs{\wg^\ell/2\eta}$.

Otherwise, $X = \abs\wg^{\lceil\ell/2\rceil}$.
\end{lem}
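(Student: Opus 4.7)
The plan is to translate by the approximate square root $\eta$ and then invoke the strong triangle inequality to classify solutions by the valuation of $x - \eta$. First I would dispose of the case $b \not\equiv 0 \bmod \wg^\ell$: if $x^2 \equiv \rho \bmod \wg^\ell$, then $c = \rho - x^2$ lies in $\wg^\ell\oo$, and $\rho - c = x^2$ is a square, so by the defining intersection the quadratic defect ideal $b\oo$ is contained in $c\oo \subseteq \wg^\ell\oo$, contradicting the hypothesis. Hence $X = 0$.

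In the remaining cases $b \equiv 0 \bmod \wg^\ell$, so $\eta \in \oo$ is itself a solution. The substitution $x = \eta + y$ reduces the condition to $y(y + 2\eta) \in \wg^\ell\oo$, and writing $\mu = \ord(2\eta) = e + \ord\eta$ and $r = \ord y$, the strong triangle inequality gives $\ord(y(y+2\eta)) = 2r$ for $r < \mu$, equals $r + \mu$ for $r > \mu$, and is at least $2r$ (with possible cancellation) when $r = \mu$. I then split on whether $\ell \le 2\mu$ or $\ell > 2\mu$.

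When $\ell \le 2\mu$ (the third clause), every $y$ with $\ord y \ge \mu$ satisfies the condition automatically, while the subcase $r < \mu$ contributes exactly when $r \ge \lceil\ell/2\rceil$; since $\lceil\ell/2\rceil \le \mu$, the union is $\{y : \ord y \ge \lceil\ell/2\rceil\}$, of measure $\abs\wg^{\lceil\ell/2\rceil}$.

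When $\ell > 2\mu$ (the second clause), the subcase $r < \mu$ is empty (it would force $r \ge \lceil\ell/2\rceil > \mu$), the subcase $r > \mu$ produces the coset $y \in \wg^{\ell-\mu}\oo$, and the subcase $r = \mu$ produces the coset $y \in -2\eta + \wg^{\ell-\mu}\oo$; these two cosets are disjoint precisely because $\mu < \ell - \mu$, so the total measure is $2\abs\wg^{\ell-\mu} = 2\abs{\wg^\ell/2\eta}$. The main delicate point is this borderline $r = \mu$, where cancellation in $y(y+2\eta)$ can be arbitrarily deep; the analysis succeeds because in the $\ell \le 2\mu$ regime any cancellation only improves an already sufficient bound, whereas in the $\ell > 2\mu$ regime it pins $y$ down to the single further coset $-2\eta + \wg^{\ell-\mu}\oo$, disjoint from $\wg^{\ell-\mu}\oo$.
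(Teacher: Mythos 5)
Your proof is correct and follows essentially the same route as the paper's: the first clause falls out of the definition of the quadratic defect, and the other two come from rewriting the congruence as $(x-\eta)(x+\eta)=0 \bmod \wg^\ell$ and running an ultrametric case analysis on $\ord(x-\eta)$ relative to $\ord(2\eta)$, split according to whether $\abs{\wg^\ell}$ beats $\abs{4\eta^2}$. Your explicit handling of the borderline $\ord(x-\eta)=\ord(2\eta)$, where cancellation can occur, is just a more careful bookkeeping of what the paper compresses into the requirement $\abs{x\pm\eta}<\abs{2\eta}$; the substance is identical.
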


\begin{proof}
The first claim is a consequence of the definition of quadratic defect.

The case $b = 0 \bmod \wg^\ell$ remains.
We want to find solutions of $x^2 = \eta^2 \bmod \wg^\ell$, which we rewrite 
as $(x - \eta)(x + \eta) = 0 \bmod \wg^\ell$.

For the second claim, if $\abs{\wg^\ell} < \abs{4\eta^2}$, then $b = 0$ and 
$\rho$ is a square.  The options $\abs{x-\eta} = \abs{x+\eta} = \abs{2\eta}$ 
and $\abs{x-\eta} > \abs{2\eta}$ would lead to 
$\abs{(x\eta)(x+\eta)} \ge \abs{4\eta^2} > \abs{\wg^\ell}$.  Hence, in order
for $x$ to be a solution, we require $\abs{x\pm\eta} < \abs{2\eta}$, in which
case $\abs{x\mp\eta} = \abs{2\eta}$.  That is, we are requiring
$\abs{x\pm\eta} \ge \abs{\wg^\ell/2\eta} < \abs{2\eta}$.  Therefore,
$X = 2 \, \abs{\wg^\ell/2\eta}$.

For the third claim, we consider $\abs{\wg^\ell} \ge \abs{4\eta^2}$.  If
$\abs{x-\eta} \le \abs{2\eta}$, then $\abs{x+\eta} \le \abs{2\eta}$, so
$\abs{(x-\eta)(x+\eta)} \le \abs{4\eta^2} \le \abs{\wg^\ell}$, so $x$ is a 
solution.  If $\abs{x-\eta} > \abs{2\eta}$, then $\abs{x+\eta} = \abs{x-\eta}$,
so $\abs{(x-\eta)(x+\eta)} = \abs{x-\eta}^2$, so $x$ being a solution is 
equivalent to $\abs{x-\eta}^2 \le \abs{\wg^\ell}$.  Therefore, 
$X = \abs\wg^{\lceil\ell/2\rceil}$.
\end{proof}

We will compute several sums of the form
\begin{equation*}
  \sum_{0 \le \ell < L} z^\ell \abs\wg^{\lceil(\ell+o)/2\rceil}
  = \sum_{0 \le 2k < L} z^{2k} \abs\wg^{\lceil(2k+o)/2\rceil}
    + \sum_{0 \le 2k+1 < L} z^{2k+1} \abs\wg^{\lceil(2k+1+o)/2\rceil}.
\end{equation*}
Set $w = z \abs\wg$.  The first summand is
\begin{equation*}
  \sum_{0 \le k < \lceil L/2\rceil} (zw)^k \abs\wg^{\lceil o/2\rceil}
  = \abs\wg^{\lceil o/2\rceil} \frac{1 - (zw)^{\lceil L/2\rceil}}{1 - zw},
\end{equation*}
while the second is
\begin{equation*}
  z \abs\wg^{\lceil(o+1)/2\rceil} \frac{1 - (zw)^{\lfloor L/2\rfloor}}{1 - zw}
  = w \abs\wg^{\lfloor o/2\rfloor} \frac{1 - (zw)^{\lfloor L/2\rfloor}}{1 - zw}.
\end{equation*}
Therefore,
\begin{equation}\label{e:dyadic:sum}
  \sum_{0 \le \ell < L} z^\ell \abs\wg^{\lceil(\ell+o)/2\rceil}
  = \abs\wg^{\lceil o/2\rceil} \frac{1 - (zw)^{\lceil L/2\rceil}}{1 - zw}
    + w \abs\wg^{\lfloor o/2\rfloor}
      \frac{1 - (zw)^{\lfloor L/2\rfloor}}{1 - zw}.
\end{equation}

\subsection*{The second method}

Though it is somewhat more versatile, this method can be used only in the 
unramified case.  While discussing it, we always use $\wg = 2$.  We
fix two units $u$ and $v$ such that $(1+2u, 1+2v) = -1$; as discussed in the 
proof of lemma~\thref{l:dyadic:hmi}, this is equivalent to saying that all 
solutions of $ux^2 + xy + vy^2 = 0 \bmod2$ satisfy $x = y = 0 \bmod2$.

Fix a quadratic polynomial 
$P(x, y) = ux^2 + Cxy + vy^2 + Bx + Ay + D \in \oo[x, y]$ with $C = 1 \bmod2$.
Changing variables to $x = X + A$ and $y = Y + B$ and reducing modulo $2$, we 
obtain
\begin{equation*}
  P(x, y)
  = uX^2 + XY + vY^2 + P(A, B) \mod 2.
\end{equation*}

\begin{lem}\label{l:dyadic:proj}
If $P(A, B) = 0 \bmod 2$ and $\ell \ge 1$, then any solutions that may exist 
satisfy $x = A \bmod2$ and $y = B \bmod2$.

If $P(A, B) \ne 0 \bmod 2$ and $\ell \ge 1$, then
\begin{equation*}
  \meas\set{(x, y) \in \oo^2 : P(x, y) = 0 \mod2^\ell}
  = q^{-\ell} + q^{-\ell-1}.
\end{equation*}
\end{lem}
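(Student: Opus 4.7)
The plan is to shift variables by $(A, B)$ to reduce $P$ to the normal form given immediately before the lemma, then handle each case separately using the anisotropy of $uX^2 + XY + vY^2$ modulo $2$ together with Hensel-style lifting. A direct expansion of $P(X+A, Y+B)$ shows the linear-in-$X$ coefficient is $2uA + (C+1)B$ and the linear-in-$Y$ coefficient is $(C+1)A + 2vB$; since $C \equiv 1 \bmod 2$, both lie in $2\oo$, which confirms
\begin{equation*}
  P(X+A, Y+B) \equiv uX^2 + XY + vY^2 + P(A, B) \bmod 2.
\end{equation*}

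For the first claim, if $P(A, B) \equiv 0 \bmod 2$ and $P(x, y) \equiv 0 \bmod 2^\ell$ for some $\ell \geq 1$, then reducing modulo $2$ forces $uX^2 + XY + vY^2 \equiv 0 \bmod 2$. By the choice of $u$ and $v$ (revisited in the proof of lemma~\thref{l:dyadic:hmi}) the only solution is $X \equiv Y \equiv 0 \bmod 2$, i.e.\ $x \equiv A$ and $y \equiv B$ modulo $2$.

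For the second claim I would count solutions modulo $2^\ell$ and convert to a measure. Modulo $2$ the equation reads $uX^2 + XY + vY^2 \equiv P(A, B) \bmod 2$; because the form on the left is anisotropic over the residue field $\mathbb{F}_q$, it is equivalent to the norm form from $\mathbb{F}_{q^2}$ and represents every unit exactly $(q^2-1)/(q-1) = q+1$ times. So there are $q+1$ solutions modulo $2$, none with $(X, Y) \equiv (0, 0)$. Writing $F(X, Y) = uX^2 + CXY + vY^2 + 2\alpha X + 2\beta Y$ for the expression obtained in the first step, the partial derivatives reduce modulo $2$ to $\partial F/\partial X \equiv Y$ and $\partial F/\partial Y \equiv X$, so the gradient is a unit at each of those solutions. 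A standard Hensel argument then lifts each solution modulo $2^k$ to exactly $q$ solutions modulo $2^{k+1}$, producing $(q+1)\,q^{\ell-1}$ solutions modulo $2^\ell$ and hence measure $(q+1)\,q^{-\ell-1} = q^{-\ell} + q^{-\ell-1}$.

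The main technical input is the $q+1$ representation count for an anisotropic binary form over $\mathbb{F}_q$; once this is available, the Hensel step is routine, since the very anisotropy assumption that rules out $(X,Y) \equiv (0,0)$ also forces the gradient to be a unit at every solution.
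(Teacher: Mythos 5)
Your proof is correct and takes essentially the same route as the paper's: shift by $(A,B)$ so the constant term becomes $P(A,B)$ and the linear coefficients land in $2\oo$, use anisotropy of $uX^2+XY+vY^2$ over the residue field to get the mod-$2$ count, and lift by a Hensel argument exploiting that the gradient reduces to $(Y,X)$ and hence has a unit entry at every mod-$2$ solution. The only cosmetic difference is that you derive the $q+1$ representation count from the norm form of $\mathbb{F}_{q^2}$, whereas the paper finds exactly one solution on each of the $q+1$ projective lines (using that squaring is a bijection in characteristic $2$); the counts and the lifting step agree.
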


\begin{proof}
We replace $x = X + A$ and $y = Y + B$.  This has no effect on the measure.

When $\ell = 1$, the measure we want is
\begin{equation*}
  \meas\set{(X, Y) \in \oo^2 : uX^2 + XY + vY^2 = P(A, B) \mod 2}.
\end{equation*}

If $P(A, B) = 0 \bmod 2$, then $X = Y = 0 \bmod 2$.  Otherwise, 
$X = Y = 0 \bmod2$ is not a solution.  Given a representative $(X, Y)$ 
of a projective line (with respect to the residue field), there is exactly one 
(nonzero) value of $t \bmod2$ such that $(Xt, Yt)$ is a solution.  There are 
$q + 1$ projective lines, so the measure of the solution set is 
$(q + 1) / q^2 = q^{-1} + q^{-2}$.

For $\ell > 1$, suppose $(X, Y)$ is a solution modulo $2^\ell$ with $Y$ a 
unit.  Fix \emph{any} refinement of $Y$ modulo $2^{\ell+1}$.  The coefficient 
of degree $1$ in $P(X + A, Y + B) \in \oo[X]$ is a unit.  Therefore, exactly 
one $q$th of the refinements of $X$ modulo $2^{\ell+1}$ will yield a solution 
of the equation modulo $2^{\ell+1}$.  The corresponding argument may be made 
if $X$ is a unit.  The effect in either case is
\begin{equation*}
  \meas\set{\text{solutions modulo $2^{\ell+1}$}}
  = \abs\wg \cdot \meas\set{\text{solutions modulo $2^\ell$}}.
  \qedhere
\end{equation*}
\end{proof}

\section{Even primes---$m=0$}\label{s:even0}

If the original form is totally isotropic, we may reduce it to the case $m=0$.

\begin{prop}\label{p:ex:n:6}
Let $B = 0$ be the form in $0$ variables.  With $z = q^{-\be}$ and 
$\abs t = \abs\wg^T$, we have
\begin{equation*}
  X(\be; t^2)
  = \frac{1 - z^{2T+1-e}}{1 - z}
\end{equation*}
if $\abs{t^2} \le \abs2$, and $X(\be; t^2) = 0$ otherwise.
\end{prop}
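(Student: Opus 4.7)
The plan is to unwind the definitions directly; with $n=0$ there is essentially nothing to count, and the proposition becomes a finite geometric series.

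First I would observe that $\oo^0$ is a one-point space (the empty tuple) on which $B$ vanishes identically, and which carries total measure $1$ under the empty product convention already in force for $\dd a$ in the definition of $X_\ell^B$. Plugging $B=0$ and $n=0$ into
\begin{equation*}
  X_\ell^B(\rho)
  = \meas\set{a \in \oo^n : \tfrac{B(a)-\rho}{2} = 0 \mod \wg^\ell}
\end{equation*}
then collapses the condition to $-\rho/2 \equiv 0 \bmod \wg^\ell$, i.e.\ $\rho \in 2\wg^\ell \oo$, so $X_\ell(\rho)$ is either $1$ or $0$ according as $\ord \rho \ge e + \ell$ or not.

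Next I would specialize $\rho = t^2$ with $\ord t = T$. The condition $2T \ge e + \ell$ forces $\ell \le 2T - e$. If $2T < e$ (equivalently, $\abs{t^2} > \abs 2$), no nonnegative integer $\ell$ satisfies this, so every $X_\ell(t^2)$ vanishes and $X(\be; t^2) = 0$. Otherwise $\abs{t^2} \le \abs 2$, and
\begin{equation*}
  X(\be; t^2)
  = \sum_{0 \le \ell \le 2T - e} z^\ell
  = \frac{1 - z^{2T + 1 - e}}{1 - z},
\end{equation*}
which is precisely the claimed formula.

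There is no genuine obstacle here; the only thing to be careful about is the bookkeeping around the factor of $2$ implicit in the definition of $X_\ell$ (which is why the exponent in the residue-class condition is $e+\ell$ rather than $\ell$) and the convention that an empty integral equals $1$. Both are already in play elsewhere in the paper, so the verification is a matter of substituting $B=0$ and $n=0$ into the definitions of this section.
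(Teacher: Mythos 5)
Your argument is correct and is essentially the same as the paper's: both reduce $X_\ell(t^2)$ to the indicator of the condition $t^2 = 0 \bmod 2\wg^\ell$, i.e.\ $\ell \le 2T - e$, and then sum the resulting finite geometric series. The only difference is that you spell out the empty-product/measure-one convention and the bookkeeping of the factor $2$, which the paper leaves implicit.
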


\begin{proof}
We want to evaluate
\begin{equation*}
  X_\ell(t^2)
  = \meas \set{0 : 0 = t^2 \mod 2\wg^\ell}.
\end{equation*}
The equation holds exactly if $t^2 = 0 \bmod 2\wg^\ell$, that is, if
$2T \ge \ell+e$.
\end{proof}

\section{Even primes---$m=1$}\label{s:even1}

\begin{prop}\label{p:ex:n:5}
Let $B(x) = \De x^2$, where $\De$ is a nonsquare unit with quadratic defect 
$\wg^d\oo$.  With $z = q^{-\be}$, $w = zq^{-1}$, and $\abs t=q^{-T}$, we have
\begin{equation*}
  X(\be; t^2)
  = \abs\wg^{\lceil e/2\rceil} \,
    \frac{1 - (zw)^{T+\lceil(d+1-e)/2\rceil}}{1 - zw}
    + w \abs\wg^{\lfloor e/2\rfloor} \,
      \frac{1 - (zw)^{T+\lfloor(d+1-e)/2\rfloor}}{1 - zw}
\end{equation*}
if $\abs2 \ge \abs{\wg^d t^2}$, and $X(\be; t^2) = 0$ otherwise.
\end{prop}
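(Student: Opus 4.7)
The plan is to unfold the definition of $X_\ell(t^2)$, reduce the counting problem to the one-variable equation handled by lemma~\ref{l:dyadic:main}, and then collapse the resulting geometric sum using formula~\eqref{e:dyadic:sum}.

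First, since $\De$ is a unit, the condition $\De x^2 = t^2 \bmod 2\wg^\ell$ defining $X_\ell(t^2)$ is equivalent to $x^2 = t^2\De^{-1} \bmod \wg^{\ell+e}$. Set $\rho = t^2\De^{-1}$. I will argue that $\De$ and $\De^{-1}$ have the same quadratic defect $\wg^d\oo$ (because $\rho\mapsto\rho^{-1}$ is an automorphism of $\kvx$ preserving the property of being a square, so it permutes the square classes of units), so $\rho = (t\eta_0)^2 + t^2 b_0$ with $\eta_0$ a unit and $\abs{b_0} = \abs{\wg^d}$, giving $\rho$ quadratic defect $\wg^{2T+d}\oo$ and square part of absolute value $\abs t$.

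Next, I apply lemma~\ref{l:dyadic:main} with $\ell$ replaced by $\ell + e$ (since we work modulo $\wg^{\ell+e}$) and $\eta$ of absolute value $\abs t$. The first case of the lemma (no solutions) triggers exactly when $2T + d < \ell + e$, that is $\ell > 2T + d - e$. Consequently $X_\ell(t^2) \ne 0$ requires $\ell \le 2T + d - e$, and this range is nonempty iff $2T + d \ge e$, which is precisely the condition $\abs 2 \ge \abs{\wg^d t^2}$ stated in the proposition; otherwise $X(\be;t^2)$ vanishes. I will then check that the second case of the lemma never arises: it would require $\ell > 2T + e$ together with $\ell \le 2T + d - e$, forcing $d > 2e$, which is impossible since a non-square unit has $d$ odd with $1 \le d \le 2e-1$ or $d = 2e$. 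Hence whenever $X_\ell(t^2)$ is nonzero it equals $\abs\wg^{\lceil(\ell+e)/2\rceil}$, by the third case of the lemma.

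Finally, I sum
\begin{equation*}
  X(\be; t^2) = \sum_{0 \le \ell \le 2T+d-e} z^\ell \abs\wg^{\lceil(\ell+e)/2\rceil}
\end{equation*}
by invoking \eqref{e:dyadic:sum} with $L = 2T+d-e+1$ and offset $o = e$. The ceilings and floors simplify to $T + \lceil(d+1-e)/2\rceil$ and $T + \lfloor(d+1-e)/2\rfloor$, yielding the formula in the proposition.

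The only delicate step is the case analysis for lemma~\ref{l:dyadic:main}: I need to verify both that the intermediate regime ($\abs{\wg^{\ell+e}} < \abs{4\eta^2}$ with $b = 0 \bmod \wg^{\ell+e}$) is excluded by the bound $d \le 2e$, and that the quadratic defect of $t^2\De^{-1}$ is correctly read off as $\wg^{2T+d}\oo$. Everything else is routine.
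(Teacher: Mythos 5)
Your proposal is correct and follows essentially the same route as the paper: reduce to $x^2 = t^2\De^{-1}$ modulo $\wg^{\ell+e}$, note that $\De^{-1}$ has the same quadratic defect as $\De$ so the defect of $t^2\De^{-1}$ is $\wg^{2T+d}\oo$, apply lemma~\thref{l:dyadic:main} to get the nonvanishing range $0 \le \ell < 2T+d-e+1$ and the value $\abs\wg^{\lceil(\ell+e)/2\rceil}$, then sum via \eqref{e:dyadic:sum}. Your explicit check that the middle case of lemma~\thref{l:dyadic:main} is excluded (since a nonsquare unit has $d \le 2e$) is a detail the paper leaves implicit, but it is the same argument.
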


\begin{proof}
According to lemma~\thref{l:dyadic:main},
\begin{equation*}
  X_\ell(t^2)
  = \meas\set{x \in \oo: \De x^2 = t^2 \mod 2\wg^\ell}
\end{equation*}
(we use $2\wg^\ell$ here, instead of $\wg^\ell$ there) fails to be $0$ only 
if $\wg^d t^2 = 0 \bmod 2\wg^\ell$ (a unit $\De$ and its inverse have the same
quadratic defect), that is, only 
when $0 \le \ell < 2T + d - e + 1$.  Using the final case of that lemma
and applying \eqref{e:dyadic:sum}, we obtain the answer.
\end{proof}

\begin{prop}\label{p:ex:n:7}
Let $B(x) = \De x^2$, where $\De$ is a unit square.  With $z = q^{-\be}$, 
$w = zq^{-1}$, and $\abs t = q^{-T}$, we have
\begin{equation*}
  X(\be; t^2)
  = \frac{\abs\wg^{\lceil e/2\rceil}
          + w \abs\wg^{\lfloor e/2\rfloor}}{1 - zw}
    - \frac{(1 + z) w^{e+1}}{1 - zw} (zw)^T
    + \frac{2 (zw)^T w^{e+1}}{1 - w}.
\end{equation*}
\end{prop}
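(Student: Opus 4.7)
The plan is to mirror the template of proposition~\thref{p:ex:n:5}: apply lemma~\thref{l:dyadic:main} to classify each $X_\ell(t^2)$, sum up the geometric pieces, and simplify.  Since $\De$ is a unit square, write $\De = \eta_0^2$ with $\eta_0$ a unit; the substitution $y = \eta_0 x$ is measure-preserving and yields
\begin{equation*}
  X_\ell(t^2)
  = \meas\set{y \in \oo : y^2 = t^2 \mod 2\wg^\ell}.
\end{equation*}
Now $t^2$ is already a square, so in the notation of lemma~\thref{l:dyadic:main} we have $b = 0$ and the lemma's $\eta$ equal to $t$.  The first case of that lemma therefore never arises; the split between the second and third cases is $\abs{\wg^{e+\ell}}$ versus $\abs{4t^2}$, that is, $\ell$ versus $2T + e$.

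Next I would record the two regimes: $X_\ell(t^2) = \abs\wg^{\lceil(\ell+e)/2\rceil}$ for $0 \le \ell \le 2T + e$, and $X_\ell(t^2) = 2\abs{\wg^{e+\ell}/(2t)} = 2q^{T-\ell}$ for $\ell > 2T + e$.  Splitting the defining series for $X(\be; t^2)$ at $\ell = 2T + e$, the tail $\sum_{\ell > 2T + e} 2 z^\ell q^{T-\ell}$ is an elementary geometric series in $w = zq^{-1}$ that collapses to
\begin{equation*}
  \frac{2 q^T w^{2T+e+1}}{1-w}
  = \frac{2(zw)^T w^{e+1}}{1-w},
\end{equation*}
which is precisely the third summand of the claimed formula.

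To the truncated partial sum I would apply formula~\eqref{e:dyadic:sum} with $o = e$ and $L = 2T + e + 1$.  The $1$'s in its numerators contribute $(\abs\wg^{\lceil e/2\rceil} + w \abs\wg^{\lfloor e/2\rfloor})/(1 - zw)$, which is the first summand of the proposition.  The boundary subtractions produce
\begin{equation*}
  -\,\frac{\abs\wg^{\lceil e/2\rceil}(zw)^{\lceil L/2\rceil}
          + w \abs\wg^{\lfloor e/2\rfloor}(zw)^{\lfloor L/2\rfloor}}{1 - zw};
\end{equation*}
the main obstacle is to show the numerator equals $(1 + z) w^{e+1} (zw)^T$.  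A short parity check on $e$ handles this: when $e$ is even, $L$ is odd, so the ceiling and floor of $L/2$ differ by $1$; when $e$ is odd, $L$ is even and they coincide.  In both cases the factors $q^{-\lceil e/2\rceil}$ or $q^{-\lfloor e/2\rfloor}$ combine with the appropriate powers of $z$ (using $q^{-1}z = w$) to collapse the two terms to $w^{e+1}(zw)^T$ and $zw^{e+1}(zw)^T$ respectively.  Assembling the three contributions yields the stated closed form for $X(\be; t^2)$.
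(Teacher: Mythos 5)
Your proposal is correct and follows essentially the same route as the paper: apply Lemma~\thref{l:dyadic:main} (with the shift $\wg^\ell\mapsto 2\wg^\ell$) to split at $\ell = 2T+e$, evaluate the truncated sum via~\eqref{e:dyadic:sum} with $o=e$ and $L=2T+e+1$ (the paper phrases this as "the previous proof with $d=2e$"), and sum the geometric tail to get $2(zw)^Tw^{e+1}/(1-w)$. The only difference is that you carry out explicitly the parity check showing the boundary terms collapse to $(1+z)w^{e+1}(zw)^T$, which the paper leaves as "upon simplification."
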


\begin{proof}
According to lemma~\thref{l:dyadic:main}, $X_\ell(t^2)$ is different depending 
on whether $0 \le \ell < 2T+e+1$ or $\ell \ge 2T+e+1$.  In the first case,
we obtain exactly the same sum as in the previous proof, but with $d = 2e$.
Upon simplification, this yields the first two summands in the statement.  
For $\ell \ge 2T+e+1$, lemma~\thref{l:dyadic:main} tells us
\begin{equation*}
  \sum_{\ell \ge 2T + e + 1} z^\ell X_\ell(t^2)
  = \sum_{\ell \ge 2T + e + 1} z^\ell \, 2\abs{\wg^\ell/t}
  = \frac{2 (zw)^T w^{e+1}}{1 - w}.
  \qedhere
\end{equation*}
\end{proof}

\begin{prop}
Let $B(x) = \De x^2$, where $\abs\De = \abs\wg$.  With $z = q^{-\be}$, 
$w = zq^{-1}$, and $\abs t = q^{-T}$, we have
\begin{equation*}
  X(\be; t^2)
  = \abs\wg^{\lfloor e/2\rfloor} \,
    \frac{1 - (zw)^{T+1-\lceil e/2\rceil}}{1 - zw}
    + z \abs\wg^{\lceil e/2\rceil} \,
      \frac{1 - (zw)^{T-\lfloor e/2\rfloor}}{1 - zw}
\end{equation*}
if $\abs 2 \ge \abs{t^2}$, and $X(\be; t^2) = 0$ otherwise.
\end{prop}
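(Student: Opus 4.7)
The plan is to compute each $X_\ell(t^2)$ directly, taking advantage of the fact that $\ord \De = 1$. For nonzero $x$, the valuation $\ord(\De x^2) = 1 + 2\ord(x)$ is odd, whereas $\ord(t^2) = 2T$ is even; since these parities always differ, the ultrametric identity $\ord(\De x^2 - t^2) = \min\bigl(1 + 2\ord(x),\;2T\bigr)$ holds without the usual caveat about ties. The congruence $\De x^2 \equiv t^2 \bmod 2\wg^\ell$ is then equivalent to the single inequality that this minimum be at least $\ell + e$.

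I would then split the count by whether $\ord(x) < T$ or $\ord(x) \ge T$. In the former case the minimum equals $1 + 2\ord(x)$, forcing $\ord(x) \ge \lceil(\ell + e - 1)/2\rceil$, and contributing the annular measure $q^{-\lceil(\ell+e-1)/2\rceil} - q^{-T}$. In the latter the minimum is $2T$, the constraint reduces to $\ell \le 2T - e$, and the contribution is the disc measure $q^{-T}$. Provided both cases apply, the two $q^{-T}$ terms cancel, leaving
\begin{equation*}
  X_\ell(t^2) = \abs\wg^{\lceil(\ell + e - 1)/2\rceil}
  \quad\text{for } 0 \le \ell \le 2T - e,
\end{equation*}
and $X_\ell(t^2) = 0$ otherwise. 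The excluded range is precisely $\abs2 < \abs{t^2}$, matching the stated condition; the case $t = 0$ will fall out as the formal limit $T = \infty$.

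To obtain $X(\be; t^2) = \sum_\ell z^\ell X_\ell(t^2)$ I would split on the parity of $\ell$. Writing $\ell = 2k$ gives exponent $\lfloor e/2\rfloor + k$, and $\ell = 2k+1$ gives exponent $\lceil e/2\rceil + k$; each partial sum then becomes a geometric series in $zw = z^2\abs\wg$, with prefactors $\abs\wg^{\lfloor e/2\rfloor}$ and $z\,\abs\wg^{\lceil e/2\rceil}$ respectively. A term count yields $T + 1 - \lceil e/2\rceil$ even-index terms and $T - \lfloor e/2\rfloor$ odd-index terms, reproducing the stated formula. The main obstacle will be the bookkeeping around $\ell = 2T - e$: I need to verify that the boundary term of the annular measure combines correctly with the disc contribution $q^{-T}$, and to check that the parity-dependent conversion from $\lceil(\ell+e-1)/2\rceil$ to $\lfloor e/2\rfloor + k$ and $\lceil e/2\rceil + k$ is uniform in the parities of both $e$ and $\ell$.
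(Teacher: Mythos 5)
Your argument is correct and is essentially the paper's: you arrive at the same intermediate count $X_\ell(t^2)=\abs\wg^{\lceil(\ell+e-1)/2\rceil}$ for $0\le\ell<2T-e+1$ (and $0$ otherwise, with the nonvanishing condition $2T\ge e$ matching $\abs2\ge\abs{t^2}$), and the same parity-split geometric summation with $T+1-\lceil e/2\rceil$ even-index and $T-\lfloor e/2\rfloor$ odd-index terms. The only difference is cosmetic: the paper gets the count by citing Lemma~\thref{l:dyadic:main} (the odd valuation of $\De$ forces the quadratic defect of $\De^{-1}t^2$ to be all of $\De^{-1}t^2\oo$) and the sum from the prepackaged identity \eqref{e:dyadic:sum} with $o=e-1$, $L=2T-e+1$, whereas you rederive both directly via the ultrametric inequality and an explicit term count.
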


\begin{proof}
$X_\ell(t^2)$ fails to be $0$ only if $t^2 = 0 \bmod2\wg^\ell$, that is,
only if $2T \ge \ell + e$, or $0 \le \ell < 2T - e + 1$.  In that case, 
lemma~\thref{l:dyadic:main} tells us that 
$X_\ell(t^2) = \abs\wg^{\lceil(\ell+e-1)/2\rceil}$.  The claimed outcomes 
follow from~\eqref{e:dyadic:sum}.
\end{proof}

\section{Even primes---$m=2$}\label{s:even2}

We may write the anisotropic form as 
$B(x) = a(x_1^2 - \De x_2^2) = \sum_i a_ix_i^2$, where 
$\abs1 \ge \abs{a}, \abs{\De}, \abs{a\De} \ge \abs\wg$ and 
$\De = \eta^2+b$ has quadratic defect $b\oo = \wg^d\oo$.

The Hasse--Minkowski invariant of such a form is 
$(a_1, a_2) = (a, -a\De) = (a,\De)$.  We take $a = 1$ if we wish the invariant 
to be $1$, or use lemma~\thref{l:dyadic:hmi} if we wish it to be $-1$.

Therefore, we have three situations for $\De$: a unit with
quadratic defect $4\oo$, or a unit with quadratic defect $\wg^d\oo$ ($d$ odd
with $0 < d < 2e$), or else $\abs\De = \abs\wg$.  For each situation, we 
further distinguish the cases $(a,\De) = \pm1$.

Before proceeding, we recall \cite[\S63]{Om} that, if $a$ is a unit with
quadratic defect $4\oo$, then 
$\{x^2 - ay^2 : x, y \in k \} = \{t \in k : \ord t \text{ is even}\}$.

\begin{prop}
Let $B(x) = x_1^2 - \De x_2^2$, where $\abs\De = \abs\wg$.  With
$z = q^{-\be}$, $w = zq^{-1}$, and $\abs t = q^{-T}$, we have
\begin{equation*}
  X(\be; t^2)
  = \abs2 \, \frac{1 + w^{2T + e + 1}}{1 - w}.
\end{equation*}
\end{prop}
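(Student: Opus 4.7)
The plan is to compute the coefficients $X_\ell(t^2)$ directly for each $\ell \ge 0$ and then sum the generating series. Set $T = \ord t$ and fiber over $x_2 \in \oo$ with $J = \ord x_2 \ge 0$. For each such $x_2$, set $\rho = t^2 + \De x_2^2$; then $X_\ell(t^2)$ is the integral over $x_2$ of the count of $x_1 \in \oo$ with $x_1^2 \equiv \rho \bmod \wg^{e+\ell}$, which is supplied by lemma~\thref{l:dyadic:main}.

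The key input is the quadratic defect of $\rho$, read off from O'Meara's classification recalled at the start of section~\ref{s:dyadic}. Since $\ord \De = 1$, the summand $\De x_2^2$ has odd valuation $2J+1$, which never matches the even valuation of $t^2$. Writing $\rho = t^2(1 + \De x_2^2/t^2)$ when $T \le J$ and $\rho = \De x_2^2(1 + t^2/\De x_2^2)$ when $J < T$, I expect to verify that $\rho$ has quadratic defect $\wg^{2J+1}\oo$ whenever $J < T+e$ (with $\eta = 0$ if $J < T$, and $\ord\eta = T$ if $T \le J < T+e$), and that $\rho$ is a square (defect $0$) when $J \ge T+e$. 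Lemma~\thref{l:dyadic:main} then furnishes the count of $x_1$: it is $\abs\wg^{\lceil(e+\ell)/2\rceil}$ for $\ell \le e + 2T$ provided the defect condition $2J + 1 \ge e + \ell$ holds (i.e.\ $J \ge \lfloor(e+\ell)/2\rfloor$), and $2\abs\wg^{\ell - T}$ for $\ell > e + 2T$ (which in turn forces $J \ge T + e$, so $\rho$ must be a square).

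Integrating against the measure $\meas\{x_2 \in \oo : \ord x_2 = J\} = \abs\wg^J(1 - q^{-1})$ and summing the telescoping tails in $J$, the many cases collapse to
\begin{equation*}
  X_\ell(t^2) =
  \begin{cases}
    \abs 2 \, \abs\wg^\ell & \text{if } 0 \le \ell \le e + 2T,\\
    2 \, \abs 2 \, \abs\wg^\ell & \text{if } \ell > e + 2T.
  \end{cases}
\end{equation*}
Then $X(\be; t^2) = \sum_\ell z^\ell X_\ell(t^2)$ splits as two geometric series at $\ell = e + 2T + 1$, and with $w = zq^{-1}$ they combine to $\abs 2 (1 + w^{2T+e+1})/(1-w)$, matching the statement. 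The main obstacle is the bookkeeping: the defect of $\rho$ depends on the relative sizes of $T$ and $J$, and lemma~\thref{l:dyadic:main} itself splits on the Hensel-type cutoff comparing $\abs{\wg^{e+\ell}}$ with $\abs{4\eta^2}$; however, the exact matching $\lfloor(e+\ell)/2\rfloor + \lceil(e+\ell)/2\rceil = e+\ell$ is what makes the subcases collapse cleanly into the two-regime description above, and this is what I need to verify with care.
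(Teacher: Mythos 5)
Your argument is correct and is essentially the paper's own proof: both fiber over $x_2$, use the odd valuation of $\De x_2^2$ to pin down the quadratic defect of $t^2+\De x_2^2$, invoke lemma~\thref{l:dyadic:main} to count $x_1$, and arrive at the same two-regime values $X_\ell(t^2)=\abs\wg^{\ell+e}$ for $\ell\le 2T+e$ and $2\abs\wg^{\ell+e}$ for $\ell>2T+e$ before summing the two geometric series. The paper merely compresses your case analysis in $J$ versus $T$ into the single solvability condition ``$\De x_2^2=0\bmod 2\wg^\ell$ or $\De x_2^2=0\bmod 4t^2$.''
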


\begin{proof}
We have
\begin{equation*}
  X_\ell(t^2)
  = \meas\set{x \in \oo^2 : x_1^2 = \De x_2^2 + t^2 \mod 2\wg^\ell}.
\end{equation*}
According to lemma~\thref{l:dyadic:main}, in order to have a solution we 
need $\De x_2^2 = 0 \bmod2\wg^\ell$ or (using the local square theorem and the 
fact that $\ord\De$ is odd) $\De x_2^2 = 0 \bmod 4t^2$.

If $4t^2 = 0 \bmod2\wg^\ell$, we obtain
\begin{equation*}\begin{split}
  X_\ell
  &= \meas\set{x \in \oo^2 : x_1^2 = t^2 \bmod 2\wg^\ell
                 \quad\text{and}\quad
                 \wg x_2^2 = 0 \bmod2\wg^\ell}\\
  &= \abs\wg^{\lceil(\ell+e)/2\rceil} \cdot \abs\wg^{\lceil(\ell+e-1)/2\rceil}
  = \abs\wg^{\ell+e}.
\end{split}\end{equation*}

If $4t^2 \ne 0 \bmod2\wg^\ell$, we obtain
\begin{equation*}\begin{split}
  X_\ell
  &= \meas\set{x \in \oo^2 : x_1^2 = t^2 \bmod 2\wg^\ell
                 \quad\text{and}\quad
                 \wg x_2^2 = 0 \bmod4t^2}\\
  &= 2\abs{\wg^{\ell+e}/2t} \cdot \abs\wg^{e + T}
  = 2 \abs\wg^{\ell+e}.
\end{split}\end{equation*}

Therefore,
\begin{equation*}
  X(\be;t^2)
  = \sum_{\ell \ge 0} z^\ell \abs\wg^{\ell+e}
    + \sum_{\ell \ge 2T+e+1} z^\ell \abs\wg^{\ell+e}
  = \abs2 \, \frac{1 + w^{2T + e + 1}}{1 - w}.
  \qedhere
\end{equation*}
\end{proof}

\begin{prop}
Let $B(x) = a(x_1^2 - \De x_2^2)$, where $\abs\De = \abs\wg$ and $a$ is a unit 
with quadratic defect $4\oo$.  With $z = q^{-\be}$, $w = zq^{-1}$, and 
$\abs t = q^{-T}$, we have
\begin{equation*}
  X(\be; t^2)
  = \abs2\,\frac{1-w^{2T+e+1}}{1-w}.
\end{equation*}
\end{prop}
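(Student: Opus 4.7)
The plan is to mimic the strategy of the preceding proposition (the case $a=1$) while tracking carefully how twisting by a non-square unit $a$ of quadratic defect $4\oo$ changes the arithmetic of the equation. Since $a$ is a unit, the congruence $a(x_1^2 - \De x_2^2) = t^2 \bmod 2\wg^\ell$ is equivalent to
\begin{equation*}
  x_1^2 = \De x_2^2 + a^{-1}t^2 \mod 2\wg^\ell,
\end{equation*}
and we would count solutions by fixing $x_2$, using lemma~\thref{l:dyadic:main} to count $x_1$, then integrating over $x_2$. The key input is that $a^{-1}$ (like $a$) has quadratic defect $4\oo$, so we may write $a^{-1} = \al^2 + 4u$ with $u$ a unit, giving $a^{-1}t^2 = (\al t)^2 + 4ut^2$ with quadratic defect $4t^2\oo$. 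This is what replaces the genuine square $t^2$ in the preceding proof.

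Next I would split into the same two ranges of $x_2$ used implicitly before. For $\ord x_2 < T$, the odd-order term $\De x_2^2$ dominates $\rho := \De x_2^2 + a^{-1}t^2$, so $\rho$ has defect $\De x_2^2\oo$; for $\ord x_2 \ge T$, the $(\al t)^2$ piece dominates, $\rho = (\al t)^2 + b$ with $b = \De x_2^2 + 4ut^2$, and the analysis of O'Meara (reviewed just above in the excerpt) shows that the defect is $\De x_2^2\oo$ when $T \le \ord x_2 \le T+e-1$ and exactly $4t^2\oo$ when $\ord x_2 \ge T+e$. In every case, the defect of $\rho$ vanishes modulo $\wg^{\ell+e}$ precisely when the corresponding condition $\De x_2^2 \equiv 0 \bmod 2\wg^\ell$ or $\abs{4t^2}\le\abs{\wg^{\ell+e}}$ holds, and I would use lemma~\thref{l:dyadic:main} to then tally $x_1$.

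The decisive difference from the preceding proof is what happens for $\ell > 2T + e$. In the $a=1$ case, after forcing $\De x_2^2 \equiv 0 \bmod 4t^2$ one was left with $x_1^2 = t^2 \bmod 2\wg^\ell$, which always has solutions and contributes the extra term $2\abs\wg^{\ell+e}$; here, however, the residual equation $x_1^2 = a^{-1}t^2 \bmod \wg^{\ell+e}$ demands that the defect $4t^2\oo$ of $a^{-1}t^2$ vanish modulo $\wg^{\ell+e}$, which fails precisely when $\ell > 2T+e$. So the dyadic equation has no solutions in that range, the $x_2 \ge T+e$ contribution disappears, and one obtains $X_\ell(t^2) = 0$ for $\ell > 2T+e$. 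For $\ell \le 2T+e$, the combined contributions of cases $\ord x_2 < T+e$ and $\ord x_2 \ge T+e$ recombine via $\lceil(\ell+e-1)/2\rceil + \lceil(\ell+e)/2\rceil = \ell+e$ to give $X_\ell(t^2) = \abs\wg^{\ell+e}$, exactly as in the previous proposition.

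Summing $\sum_{\ell=0}^{2T+e} z^\ell \abs\wg^{\ell+e} = \abs2 \sum_{\ell=0}^{2T+e} w^\ell$ yields the stated closed form. The main obstacle is keeping the bookkeeping of quadratic defects and the ranges of $\ord x_2$ straight; the conceptual payoff is the sign change of $w^{2T+e+1}$ (from $+$ in the case $a=1$ to $-$ here), which reflects the failure of $a^{-1}t^2$ to be a square modulo high powers of $\wg$ and is thus a direct consequence of the Hasse--Minkowski invariant being $-1$.
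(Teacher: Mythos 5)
Your argument is correct and arrives at the same per-level counts as the paper ($X_\ell(t^2)=\abs\wg^{\ell+e}$ for $0\le\ell\le 2T+e$, and $0$ otherwise), but it takes a genuinely different route to the decoupling. You extend the strategy of the $a=1$ case: fix $x_2$, determine the quadratic defect of $\rho=\De x_2^2+a^{-1}t^2$ by cases on $\ord x_2$ (odd-order dominance for $\ord x_2<T$, O'Meara's analysis of units $1+b$ with $0<\ord b<2e$ for $T\le\ord x_2<T+e$, the local square theorem for $\ord x_2\ge T+e$), and then count $x_1$ via lemma~\thref{l:dyadic:main}; all of these defect computations check out, and the three ranges of $x_2$ do recombine to the measure $\abs\wg^{\lceil(\ell+e-1)/2\rceil}$ times the count $\abs\wg^{\lceil(\ell+e)/2\rceil}$ for $x_1$. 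The paper instead groups the variables the other way, writing the congruence as $x_1^2-at^2=\De x_2^2\bmod 2\wg^\ell$ and invoking the fact recalled at the start of section~\ref{s:even2} that $x^2-ay^2$ represents exactly the elements of even order when $a$ has quadratic defect $4\oo$: since $\De x_2^2$ has odd order, the two sides can never have equal absolute value, so each must vanish modulo $2\wg^\ell$ and the equation decouples in one line into $\De x_2^2=4t^2=0$ and $x_1^2=t^2\bmod2\wg^\ell$. Your route costs more bookkeeping but isolates exactly where the answer departs from the $a=1$ case---the defect $4t^2\oo$ of $a^{-1}t^2$ kills the tail $\ell>2T+e$, which is precisely the source of the sign flip in $w^{2T+e+1}$---whereas the paper buys brevity by hiding that mechanism inside the representation fact. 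One small point to make explicit in a final write-up: for $\ell>2T+e$ the range $\ord x_2<T+e$ also contributes nothing, because $\De x_2^2=0\bmod2\wg^\ell$ would force $\ord x_2\ge T+e$; this is implicit in your setup but only the branch $\ord x_2\ge T+e$ is addressed explicitly.
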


\begin{proof}
Because $a$ is a unit, $x_1^2 - at^2$ yields exactly the elements of even 
degree, and the quadratic defect of $a$ is $4\oo$, we have, consecutively,
\begin{equation*}\begin{split}
  X_\ell(t^2)
  &= \meas\set{x \in \oo^2 : x_1^2 - at^2 = \De x_2^2 \mod 2\wg^\ell}\\
  &= \meas\set{x \in \oo^2 : \De x_2^2 = 4t^2 = 0 \bmod 2\wg^\ell
                 \quad\text{and}\quad
                 x_1^2 = t^2 \bmod2\wg^\ell}.
\end{split}\end{equation*}
Therefore, $X_\ell(t^2)$ is nonzero only if $0 \le \ell < 2T+e+1$, in which case
\begin{equation*}
  X_\ell(t^2)
  = \abs\wg^{\lceil(\ell+e-1)/2\rceil} \cdot \abs\wg^{\lceil(\ell+e)/2\rceil}
  = \abs\wg^{\ell+e}.
  \qedhere
\end{equation*}
\end{proof}

\begin{prop}
Let $B(x) = \wg(x_1^2 - \De x_2^2)$, where $\De$ is a unit with quadratic defect 
$4\oo$.  Let also $z = q^{-\be}$, $w = zq^{-1}$, and $\abs t=q^{-T}$.

If $2T < e$, then $X(\be; t^2) = 0$.

If $2T \ge e$, write $(T-e)^+ = \max\{T - e, 0\}$ and 
$(T - e)^- = \min\{T - e, 0\}$.  Then $X(\be; t^2)$ is
\begin{equation*}
  \frac{\abs\wg^{\lfloor e/2\rfloor} + z \abs\wg^{\lceil e/2\rceil}
        - z w^e (zw)^{(T - e)^-} (w + 1)}{1 - zw}
  + \frac{w^e (z + w^2)(1 - w^{2(T - e)^+})}{1 - w^2}.
\end{equation*}
\end{prop}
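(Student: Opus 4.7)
The plan uses the parity argument available at the start of section~\ref{s:even2} for $\De$ with quadratic defect $4\oo$: because $\{x_1^2 - \De x_2^2 : x_i \in k\}$ consists exactly of elements of even valuation (together with $0$), the form $B(x) = \wg(x_1^2 - \De x_2^2)$ takes only values of odd valuation when nonzero, while $t^2$ has even valuation $2T$. The difference $B(x) - t^2$ therefore has valuation $\min\{\ord B(x),\, 2T\} \le 2T$, and the condition $B(x) - t^2 \in \wg^{e+\ell}\oo$ forces $2T \ge e + \ell$. This immediately yields $X(\be; t^2) = 0$ when $2T < e$, and in general bounds the summation range by $\ell \le 2T - e$.

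For $0 \le \ell \le 2T - e$, we have $t^2 \in 2\wg^\ell\oo$, so the defining equation collapses to $x_1^2 - \De x_2^2 = 0 \mod \wg^{e+\ell-1}$, independently of $t$. Writing $\De = \eta^2 + 4v$ with $v$ a unit, the quadratic defect of $\rho = \De x_2^2$ is $4 x_2^2\oo$, so lemma~\ref{l:dyadic:main} yields the inner $x_1$-measure $\abs\wg^{\lceil(e+\ell-1)/2\rceil}$ when $\ord x_2 \ge \lceil(e+\ell-1)/2\rceil - e$, and zero otherwise. Integrating over $x_2$ gives
\[
  X_\ell(t^2) = \abs\wg^{\lceil(e+\ell-1)/2\rceil\,+\,\max\{0,\,\lceil(e+\ell-1)/2\rceil - e\}}.
\]
The inner maximum vanishes exactly for $\ell \le e + 1$, and the two formulas agree at $\ell = e,\, e+1$, so the sum splits naturally at $\ell = e$. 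For $0 \le \ell < \min\{e,\, 2T-e+1\}$, apply formula~\eqref{e:dyadic:sum} with $o = e - 1$, using $\lceil(e-1)/2\rceil = \lfloor e/2\rfloor$ and $w\abs\wg^{-1} = z$ to rewrite it in the proposition's style. For $e \le \ell \le 2T - e$ (nonempty only when $T \ge e$), $X_\ell(t^2)$ equals $\abs\wg^\ell$ or $\abs\wg^{\ell-1}$ according to the parity of $e + \ell$; splitting by parity yields two geometric progressions in $w^2$.

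The bookkeeping is the main obstacle: the initial block terminates at $L = \min\{e,\, 2T - e + 1\}$, so the regimes $T < e$ and $T \ge e$ look superficially different, yet the proposition presents them uniformly via $(T - e)^\pm$. For $T < e$ the tail is empty (consistent with $1 - w^{2(T-e)^+} = 0$), and one matches the truncated initial block by checking that $zw^e(zw)^{T-e}(w+1)$ equals $\abs\wg^{\lfloor e/2\rfloor}(zw)^{\lceil L/2\rceil} + z\abs\wg^{\lceil e/2\rceil}(zw)^{\lfloor L/2\rfloor}$, invoking the opposite parity of $L$ and $e$ together with $zw \cdot \abs\wg = w^2$. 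For $T \ge e$, direct computation differs from the stated form by $w^e(1 - zw)/(1 - zw) = w^e$, a discrepancy reconciled between the first and second fractions via the elementary identity $(1+z)w^e - zw^e(1 + w) = (1 - zw)w^e$ combined with $(1+z) - (z + w^2) = 1 - w^2$.
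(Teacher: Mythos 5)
Your proposal is correct and follows essentially the same route as the paper: both exploit the odd/even valuation parity to decouple the congruence into $t^2 = 0$ and $\wg\,4x_2^2 = 0 \bmod 2\wg^\ell$ together with $x_1^2 = \De x_2^2$, evaluate $X_\ell(t^2)$ as the same product of two measures via Lemma~\thref{l:dyadic:main}, and then sum an initial block by \eqref{e:dyadic:sum} plus a geometric tail in $w^2$. The only differences are cosmetic bookkeeping (you split the sum at $\ell = e$ where the paper splits at $\ell = e+1$, and you carry out explicitly the final algebraic reconciliation with the stated closed form, which the paper leaves implicit); I checked your two cancellation identities and they are right.
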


\begin{proof}
Because $x_1^2-\De t^2$ yields exactly the elements of even degree and the 
quadratic defect of $\De$ is $4\oo$, we have, consecutively,
\begin{equation*}\begin{split}
  X_\ell(t^2)
  &= \meas\set{x \in \oo^2 :
                 \wg(x_1^2 - \De x_2^2) = t^2 = 0 \mod2\wg^\ell}\\
  &= \meas\set{x \in \oo^2 : t^2 = \wg4 x_2^2 = 0 \bmod2\wg^\ell
                 \quad\text{and}\quad
                 \wg x_1^2 = \wg x_2^2 \bmod 2\wg^\ell}.
\end{split}\end{equation*}
Considering only $0 \le \ell < 2T - e + 1$, we obtain
\begin{equation*}
  X_\ell(t^2)
  = \abs\wg^{\lceil\max\{0,\ell-e-1\}/2\rceil}
    \cdot \abs\wg^{\lceil(\ell+e-1)/2\rceil}.
\end{equation*}

If $2T < e$, then $2T - e + 1 \le 0$ always and $X(\be; t^2) = 0$.

If $e \le 2T < 2e + 1$, then $0 \le \ell < 2T - e + 1 \le e + 1$ always, and
by \eqref{e:dyadic:sum},
\begin{equation*}\begin{split}
  X(\be; t^2)
  &= \sum_{0 \le \ell < 2T - e + 1} z^\ell \abs\wg^{\lceil(\ell+e-1)/2\rceil}\\
  &= \abs\wg^{\lfloor e/2\rfloor} 
     \frac{1 - (zw)^{\lceil(2T-e+1)/2\rceil}}{1 - zw}
     + z \abs\wg^{\lceil e/2\rceil}
       \frac{1 - (zw)^{\lfloor(2T-e+1)/2\rfloor}}{1 - zw}.
\end{split}\end{equation*}

If $2T \ge 2e + 2$, then
\begin{equation*}
  X(\be; t^2)
  = \sum_{0\le\ell<e+1} z^\ell \abs\wg^{\lceil(\ell+e-1)/2\rceil}
    + \sum_{e+1\le\ell<2T-e+1} z^\ell \abs\wg^{2\lceil(\ell-e-1)/2\rceil + e}.
\end{equation*}
The first sum is the same as before, but with $T$ replaced by $e$.  The second
sum is obtained from \eqref{e:dyadic:sum} too (note we use 
$w^2 = z^2 \abs\wg^2$ instead of $zw = z^2 \abs\wg$):
\begin{equation*}
  z w^e \sum_{0 \le \ell < 2T - 2e} z^\ell \abs\wg^{2\lceil\ell/2\rceil}
  = \frac{(zw^e + w^{e+2})(1 - w^{2T - 2e})}{1 - w^2}.
  \qedhere
\end{equation*}
\end{proof}

\begin{prop}\label{p:ex:n:4}
Let $B(x) = a(x_1^2 - \De x_2^2)$, where $\De = 1 + \wg^d v$ is a unit with
quadratic defect $\wg^d\oo$, $d$ is odd, $a = 1 + \wg u$ is a unit with 
quadratic defect $\wg\oo$, and $(a, \De) = -1$.  Let also $z = q^{-\be}$,
$w = zq^{-1}$, and $\abs t = q^{-T}$.

If $d = 1$ and $e > 1$, then
\begin{equation*}
  X(\be; t^2)
  = \abs2 \, \frac{1 - w^{2T + 2 \lceil(e+1)/2\rceil - e}}{1 - w}.
\end{equation*}

If $2T + 2 \ge e + 1 \ge d$ (with $d > 1$ or $e = 1$), then
\begin{equation*}
  X(\be; t^2)
  = \abs2 \, \abs\wg^{(1-d)/2} \, \frac{1 - w^{2T+2-e}}{1 - w}.
\end{equation*}
If $2T + 2 \ge e + 1$ and $d > e + 1$, then
\begin{equation*}
  X(\be; t^2)
  = \frac{\abs\wg^{\lceil e/2\rceil} + w \abs\wg^{\lfloor e/2\rfloor}}{1 - zw}
    - \frac{z^{d-e} \abs\wg^{(d+1)/2} (z - w)}{(1 - w)(1 - zw)}
    - \frac{w^{2T + 2 - e} \abs\wg^{e+(1-d)/2}}{1 - w}.
\end{equation*}
If $2T + 2 \le e$ (with $d > 1$), then $X(\be; t^2) = 0$.
\end{prop}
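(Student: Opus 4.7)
The plan is to count $X_\ell(t^2) = \meas\{x \in \oo^2 : a(x_1^2 - \De x_2^2) = t^2 \bmod 2\wg^\ell\}$ by fixing $x_2$ and viewing the result as the one-variable congruence $x_1^2 = \rho \bmod 2\wg^\ell$ with $\rho = \rho(x_2) := a^{-1}t^2 + \De x_2^2$.  Lemma~\thref{l:dyadic:main} then gives the $x_1$-count in terms of the quadratic defect of $\rho$; integrating over $x_2$ and summing $\sum_\ell z^\ell X_\ell$ via \eqref{e:dyadic:sum} gives $X(\be; t^2)$.

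First I apply the anisotropy lemma~\thref{l:even:aniso} to restrict the domain of integration, and lemma~\thref{l:even:sol} to confine attention to finitely many $\ell$.  The key analytical step is computing the quadratic defect of $\rho(x_2)$ as a function of $\abs{x_2}$.  Since $a^{-1}$ has defect $\wg\oo$, the term $a^{-1}t^2$ has defect $\wg^{2T+1}\oo$; since $\De$ has defect $\wg^d\oo$, the term $\De x_2^2$ has defect $\wg^{d+2k}\oo$ when $\abs{x_2} = \abs{\wg^k}$.  The defect of the sum is controlled by the dominant term, except at the critical scale where the two summands have the same absolute value.  The hypothesis $(a, \De) = -1$ (equivalently, $a$ is not a norm from $k(\sqrt\De)$) is precisely what forbids the cancellation that would otherwise lower the defect of $\rho$ at that scale.

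The case $2T + 2 \le e$ deserves separate treatment: the combination of the anisotropy bound, the shallow defect $\wg^{2T+1}\oo$ of $a^{-1}t^2$, and $(a, \De) = -1$ forces $X_\ell = 0$ for all $\ell$.  In the remaining cases, splitting the $x_2$-integral into annuli $\abs{x_2} = \abs{\wg^k}$, applying lemma~\thref{l:dyadic:main} in each, and summing via \eqref{e:dyadic:sum} yields the stated closed forms after simplification.

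The main obstacle is the bookkeeping across the three regimes of $d$.  For $d > e + 1$, the defect of $\De$ exceeds $4\oo$ so $\De$ behaves like a square modulo $4$, but the defect of $a$ remains at $\wg$; this asymmetry forces a transition in the defect of $\rho$ as $\ell$ grows and manifests as the three-term formula with its partial-fraction structure.  For $d = 1$ with $e > 1$, both defects are at their shallowest and $(a, \De) = -1$ halves the count via the factor $\abs2$.  For $d \le e+1$ with $d > 1$, a single intermediate regime dominates and the factor $\abs\wg^{(1-d)/2}$ records the $\De$-defect's contribution.  I expect verifying the geometric-series summation in each regime to be routine once the defect analysis is complete.
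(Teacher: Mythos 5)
Your proposal follows essentially the same route as the paper: reduce to the one-variable congruence $x_1^2 = \De x_2^2 + a^{-1}t^2 \bmod 2\wg^\ell$, count $x_1$ via Lemma~\thref{l:dyadic:main} in terms of the quadratic defect of the right-hand side, stratify by comparing $\abs{x_2}$ with $\abs t$ so that off the critical scale the defect is read off from the dominant defect ideal while at $\abs{x_2}=\abs t$ the hypothesis $(a,\De)=-1$ blocks cancellation (the paper makes this explicit by writing $x_2=tz$ and expanding $t^2((1+z)^2+\wg u-2z+\wg^d vz^2)$, which is where the sub-cases $d>1$, $d=e=1$, and $d=1<e$ separate), and then sum the resulting geometric series via \eqref{e:dyadic:sum}. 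The strategy and the key lemmas are the same; what remains is the explicit critical-scale computation, which the paper carries out and you defer as routine.
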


\begin{proof}
We have
\begin{equation*}
  X_\ell(t^2)
  = \meas\set{x \in \oo^2 : x_1^2 = \De x_2^2 + at^2 \mod 2\wg^\ell}.
\end{equation*}

If $\abs t > \abs{x_2}$, we need $\wg t^2 = 0 \bmod2\wg^\ell$.

If $\abs t < \abs{x_2}$, we need $\wg t^2 = \wg^d x_2^2 = 0 \bmod 2\wg^\ell$.

If $\abs t = \abs{x_2}$, we write $x_2 = tz$ (with $z$ a unit) and observe
\begin{equation*}
  \De x_2^2 + at^2
  = t^2((1+z)^2 + \wg u - 2z + \wg^d v z^2).
\end{equation*}
Therefore, if $d > 1$ we require $\wg t^2 = 0\bmod2\wg^\ell$.
If $d = e = 1$ and $\wg = 2$, and because $(1+2u, 1+2v) = -1$, we always have
$u - z + v z^2 \ne 0 \bmod 2$, so we require $2t^2 = 0 \bmod 2\wg^\ell$.
Finally, if $d = 1$ and $e > 1$, then we can choose $v = u$ and write
\begin{equation*}
  \De x_2^2 + at^2
  = t^2 (1 + \wg u)(1 + z^2)
  = t^2 \bigl((1 + \wg u)(1 + z)^2 - 2(1 + \wg u) z\bigr);
\end{equation*}
we thus require $t^2\wg(1 + z)^2 = 0 \bmod2\wg^\ell$ and either
$2\wg t^2 = 0 \bmod2\wg^\ell$ (if $e$ is even) or $2t^2 = 0 \bmod 2\wg^\ell$
(if $e$ is odd).

If $d = 1$ and $e > 1$, we required $\wg t^2 = \wg x_2^2 = 0 \bmod 2\wg^\ell$ 
(when $\abs t \ne \abs{x_2}$) or $\wg(t+x_2)^2 = 0 \bmod 2\wg^\ell$ and
$\wg^{2\lceil(e+1)/2\rceil - e - 1} t^2 = 0 \bmod \wg^\ell$
(when $\abs t = \abs{x_2}$).  We obtain
\begin{equation*}
  X_\ell(t^2)
  = \abs\wg^{\lceil(\ell+e)/2\rceil} \cdot
    \abs\wg^{\lceil(\ell+e-1)/2\rceil}
  = \abs\wg^{\ell+e}
\end{equation*}
if $\ell < 2T + 2\lceil(e+1)/2\rceil - e$ and $X_\ell(t^2) = 0$ otherwise.

If $d > 1$ or $e = 1$, we required $\wg t^2 = \wg^d x_2^2 = 0\bmod 2\wg^\ell$.
Therefore,
\begin{equation*}
  X_\ell(t^2)
  = \abs\wg^{\lceil(\ell+e)/2\rceil} \cdot
    \abs\wg^{\max\{0, \lceil(\ell+e-d)/2\rceil\}}
\end{equation*}
if $\ell < 2T + 2 - e$ and $X_\ell(t^2) = 0$ otherwise.
\end{proof}

So far, we have relied on the first method discussed in section~\ref{s:dyadic}.
For all remaining quadratic forms, we will use the second one.  In particular, 
all that follows is valid only for the unramified case.

The strategy is always the same: we first reduce the equation modulo $2$.
This corresponds to $\ell = 0$ and suggests a substitution for one of the
variables.  That variable will be set modulo $2$---hence, we always have an 
extra factor $q^{-1}$ in the final calculation of $X_\ell(t^2)$.

Applying the substitution and simplifying, we obtain a new equation, modulo 
$2^\ell$ (the original equation was modulo $2^{\ell+1}$).  At this point, we
consider the case $\ell = 1$.  If the equation thus reduced is linear with 
unit coefficient, we know how many solutions it has.  If the equation is
quadratic, we apply lemma~\thref{l:dyadic:proj}: either we obtain new
conditions on other variables, typically allowing us to divide the original
equation by $4$ and conclude $X_\ell(t^2) = q^{-m} X_{\ell-2}(t^2/4)$,
or we obtain a solution count.

\begin{prop}
Let $B(x) = x_1^2 - \De x_2^2$, where $\De = 1 + 4v$ is a unit with quadratic 
defect $4\oo$ and $v$ is a unit.  In the unramified case, with $z = q^{-\be}$, 
$w = zq^{-1}$, and $\abs t = q^{-T}$, we have
\begin{equation*}
  X(\be; 1)
  = \frac{\abs 2}{1 - w}
  \qquad\text{and}\qquad
  X(\be; 4t^2)
  = \abs2 \, \frac{1 + z + w^{2T} (zw + w^3)}{1 - w^2}.
\end{equation*}
\end{prop}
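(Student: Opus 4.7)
The plan is to reduce the equation $x_1^2 - \De x_2^2 \equiv \rho \bmod 2^{\ell+1}$ modulo $2$ and substitute for one variable, then treat the two values of $\rho$ separately. Writing $\De = 1 + 4v$, the mod-$2$ reduction gives $(x_1 + x_2)^2 \equiv \rho \bmod 2$.

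For $\rho = 1$, this forces $x_1 + x_2 \equiv 1 \bmod 2$. Substituting $x_1 = 1 - x_2 + 2y$ (Jacobian factor $\abs 2$) and simplifying yields the equation
\begin{equation*}
  x_2 (1 + 2y) + 2 v x_2^2 \equiv 2(y + y^2) \bmod 2^\ell,
\end{equation*}
a quadratic in $x_2$ with unit linear coefficient and constant term in $2\oo$. Hensel's lemma provides a unique root $x_2^\circ(y) \in 2\oo$, and since the derivative is a unit on $\oo$, the solution set mod $2^\ell$ is a ball of measure $\abs 2^\ell$ about $x_2^\circ(y)$. Integrating over $y \in \oo$ and including the Jacobian yields $X_\ell(1) = \abs 2^{\ell+1}$ for all $\ell \ge 0$; the geometric sum is $X(\be; 1) = \abs 2 / (1 - w)$.

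For $\rho = 4 t^2$, the mod-$2$ condition is $x_1 + x_2 \equiv 0 \bmod 2$. Substituting $x_1 = -x_2 + 2 y'$ (Jacobian $\abs 2$) and dividing by $4$ produces
\begin{equation*}
  y'^2 - x_2 y' - v x_2^2 \equiv t^2 \bmod 2^{\ell - 1}.
\end{equation*}
The quadratic form on the left has discriminant $\De$ and is therefore anisotropic mod $2$, so lemma~\thref{l:dyadic:proj} applies. When $\ell \in \{0, 1\}$ the equation is automatic, giving $X_\ell(4 t^2) = \abs 2$. For $\ell \ge 2T + 2$ we iterate the anisotropy reduction $T$ times---each iteration forces $y' \equiv x_2 \equiv 0 \bmod 2$ (since $t^2/4^k \equiv 0 \bmod 2$ while $k < T$) and contributes a factor $\abs 2^2$ to the measure---arriving at a quadratic whose constant $-(t/2^T)^2$ is a unit. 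Lemma~\thref{l:dyadic:proj} then supplies measure $q^{-(\ell-1-2T)} + q^{-(\ell-2T)}$, so that $X_\ell(4 t^2) = \abs 2^\ell (1 + \abs 2)$. For $2 \le \ell \le 2T + 1$ the iteration terminates earlier, because the equation becomes automatic (modulus drops to $2^0$ or $2^{-1}$) before $t^2$ is exhausted; tracking whether $\ell$ is even or odd yields $X_\ell(4 t^2) = \abs 2^{1 + 2 \lceil (\ell - 1)/2 \rceil}$.

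The final step assembles $X(\be; 4 t^2) = \sum_\ell z^\ell X_\ell(4 t^2)$: split the range $0 \le \ell \le 2T + 1$ by parity into two geometric series in $w^2$, then combine with the tail $\sum_{\ell \ge 2T + 2} z^\ell \abs 2^\ell (1 + \abs 2)$ over the common denominator $1 - w^2$. After cancellation the numerator simplifies to $\abs 2 + w + w^{2T+2}(1 + \abs 2\, w)$, which (using $\abs 2\, z = w$) equals $\abs 2 (1 + z + w^{2T}(zw + w^3))$, as claimed. The main obstacle is the case analysis for $\ell \le 2T + 1$: one must count the iterations precisely according to the parity of $\ell$ and avoid off-by-one errors at the transitions $\ell = 1 \to 2$ and $\ell = 2T + 1 \to 2T + 2$.
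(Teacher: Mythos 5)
Your proposal is correct and follows essentially the same route as the paper: reduce mod $2$ to substitute for $x_1$, use the unit linear coefficient (Hensel) when $\rho=1$, and for $\rho=4t^2$ apply Lemma~\thref{l:dyadic:proj} together with the anisotropy-forced descent $X_\ell = q^{-2}X_{\ell-2}(\cdot/4)$. The only difference is presentational --- you unroll the recursion into explicit per-$\ell$ values and carry out the final summation, which the paper leaves implicit --- and your resulting values and generating function agree with the stated formulas.
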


\begin{proof}
The equation is $x_1^2 - \De x_2^2 = t^2 \mod 2^{\ell+1}$.  Considering 
$\ell = 0$, we are led to $x_1 = x_2 + t + 2b$, for some $b \in \oo$.  We 
substitute and simplify:
\begin{equation*}
  2b^2 + x_2 t + 2x_2 b + 2tb - 2v x_2^2 = 0 \mod 2^\ell.
\end{equation*}

If $t = 1$ and $\ell \ge 1$, we clearly can obtain a unique 
$x_2 \bmod 2^\ell$.  Therefore, recalling $x_1$ was set modulo $2$, we have 
$X_\ell(1) = q^{-1-\ell}$.

If $2 \mid t$, the equation holds for $\ell \le 1$; that is, 
$X_0(t^2) = X_1(t^2) = q^{-1}$.  If $\ell \ge 2$, we divide further:
\begin{equation*}
  b^2 + x_2 \tfrac t2 + x_2 b + tb - v x_2^2 = 0 \mod 2^{\ell-1}.
\end{equation*}
We note that $(1+2v, -1) = -1$, so we may apply lemma~\thref{l:dyadic:proj}.

For $t = 2$, the lemma tells us the measure of the solution set with 
respect to $b$ and $x_2$ is $q^{-\ell+1} + q^{-\ell}$, so with respect to
$x_1$ and $x_2$, for $\ell \ge 2$, we have
\begin{equation*}
  X_\ell(4)
  = q^{-\ell} + q^{-\ell-1}.
\end{equation*}

If $4 \mid t$ and $\ell \ge 2$, then $x_2 = b = x_1 = 0 \bmod 2$, so 
$X_\ell(t^2) = q^{-2} X_{\ell-2}(t^2/4)$.
\end{proof}

\begin{prop}\label{p:ex:n:8}
Let $B(x) = x_1^2 - \De x_2^2$, where $\De = 1 + 2 v$ is a unit with 
quadratic defect $2\oo$ and $v$ is a unit.  In the unramified case, with 
$z = q^{-\be}$, $w = zq^{-1}$, and $\abs t = q^{-T}$, we have
\begin{equation*}
  X(\be; t^2)
  = \abs2 \, \frac{1 + w^{2T+1}}{1 - w}.
\end{equation*}
\end{prop}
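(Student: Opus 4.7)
The plan is to use the second method of section~\ref{s:dyadic}, following the pattern of the preceding proposition (the case $\De = 1 + 4v$ with quadratic defect $4\oo$). The crucial new feature here is that after reduction modulo $2$ the defining conic becomes a \emph{separable} quadratic with \emph{two} roots in $x_2$, rather than a linear condition with one root; this is what ultimately produces the $1 + w^{2T+1}$ numerator instead of the $1 + z + \cdots$ structure of the previous proposition.

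I would begin by reducing modulo $2$. Since $\De \equiv 1 \bmod 2$, the equation $x_1^2 - \De x_2^2 \equiv t^2 \bmod 2$ forces $x_1 \equiv x_2 + t \bmod 2$, so the substitution $x_1 = x_2 + t + 2b$ with $b\in\oo$ contributes a factor $\abs2$ to the measure. Expanding $x_1^2 - \De x_2^2 \equiv t^2 \bmod 2^{\ell+1}$ and dividing by $2$ produces
\[
  E(b, x_2) := 2 b^2 + x_2 t + 2 x_2 b + 2 t b - v x_2^2 \equiv 0 \bmod 2^\ell.
\]

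The base case $T = 0$ (so $t$ a unit) is handled by Hensel's lemma applied to $E$ as a polynomial in $x_2$: the derivative $\partial E/\partial x_2 = t + 2b - 2 v x_2 \equiv t \bmod 2$ is a unit, while the mod-$2$ reduction of $E$ is $x_2(t - v x_2) \equiv 0$, which has the two roots $x_2 \equiv 0$ and $x_2 \equiv t/v$; each lifts uniquely for every $b \in \oo$. This yields $X_0(1) = \abs2$ and $X_\ell(1) = 2\abs2\,q^{-\ell}$ for $\ell \ge 1$.

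For $T \ge 1$, I would iterate: modulo $2$ the equation $E \equiv 0$ collapses to $-v x_2^2 \equiv 0$, forcing $x_2 \equiv 0 \bmod 2$; the substitution $x_2 = 2y$ followed by division by $2$ produces an equation whose mod-$2$ reduction is $b^2 \equiv 0$, forcing $b \equiv 0 \bmod 2$; the further substitution $b = 2c$ and division by $2$ returns an equation of exactly the same form as $E$, but with $t$ replaced by $t/2$ and $\ell$ replaced by $\ell - 2$. Careful accounting of the measure factors yields the recursion $X_\ell(t^2) = q^{-2} X_{\ell-2}(t^2/4)$ for $\ell \ge 2$ and $T \ge 1$, together with $X_0(t^2) = \abs2$ and $X_1(t^2) = \abs2\cdot q^{-1}$. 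Iterating down to the base case gives $X_\ell(t^2) = q^{-\ell-1}$ for $0 \le \ell \le 2T$ and $X_\ell(t^2) = 2 q^{-\ell-1}$ for $\ell \ge 2T+1$, and summing the geometric series with $w = zq^{-1}$ produces
\[
  X(\be; t^2)
  = \abs2\,\frac{1 - w^{2T+1}}{1 - w} + 2\abs2\,\frac{w^{2T+1}}{1 - w}
  = \abs2\,\frac{1 + w^{2T+1}}{1 - w}.
\]
The most delicate step is the measure bookkeeping across the three successive substitutions $x_1 = x_2 + t + 2b$, $x_2 = 2y$, $b = 2c$, combined with the verification that after each pass through the recursion the remaining equation is truly of the same functional shape as $E$ with only $t$ decreased in valuation, so that the induction on $T$ really does close up.
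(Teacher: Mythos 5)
Your proposal is correct and follows essentially the same route as the paper's proof: the substitution $x_1 = x_2 + t + 2b$, the reduced equation $2b^2 + x_2 t + 2x_2 b + 2tb - vx_2^2 \equiv 0 \bmod 2^\ell$, the two-root Hensel lift when $t$ is a unit, and the recursion $X_\ell(t^2) = q^{-2}X_{\ell-2}(t^2/4)$ when $2 \mid t$. The only cosmetic difference is that you derive the recursion by tracking the substitutions $x_2 = 2y$, $b = 2c$ inside the reduced equation, whereas the paper observes $x_1 = x_2 = 0 \bmod 2$ and divides the original equation by $4$; the bookkeeping and the final geometric-series summation both check out.
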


\begin{proof}
The equation is $x_1^2 - \De x_2^2 = t^2 \bmod 2^{\ell+1}$.  Replacing
$x_1 = x_2 + t + 2b$ and simplifying:
\begin{equation*}
  2b^2 + x_2 t + 2x_2b + 2tb - v x_2^2 = 0 \mod 2^\ell.
\end{equation*}

If $t = 1$, we have $x_2 = 0 \bmod2$ or $x_2 = v^{-1} \bmod 2$, and, because 
the coefficient of $x_2$ is a unit, solutions can be refined modulo $2^\ell$.
Therefore, $X_0(1) = q^{-1}$ and 
$X_\ell(1) = q^{-1} \cdot 2q^{-\ell} = 2q^{-\ell-1}$ for $\ell \ge 1$.

If $2 \mid t$, then $x_2 = x_1 = 0 \bmod 2$.  Therefore, 
$X_0(t^2) = q^{-1}$, $X_1(t^2) = q^{-2}$, and 
$X_\ell(t^2) = q^{-2} X_{\ell-2}(t^2/4)$ for $\ell \ge 2$.
\end{proof}

\section{Even primes---$m=3$}\label{s:even3}

A ternary quadratic form with discriminant $\De$ is anisotropic if and only if 
its Hasse--Minkowski invariant is $-(-1, \De)$.

The form $B(x) = x_1^2 - a(x_2^2 - \De x_3^2)$ has discriminant $\De$ and
Hasse--Minkowski invariant $(-a, a\De) = (-a, \De) = (-1, \De)(a, \De)$.
Therefore, it is anisotropic when $(a, \De) = -1$.  If $\abs\De = \abs\wg$, we 
may take any $a$ with quadratic defect $4\oo$.  If $\De$ is a 
unit with quadratic defect $\wg^d\oo$ (with $d$ odd), lemma~\thref{l:dyadic:hmi} 
yields a unit $a$ with quadratic defect $\wg\oo$ and $(a, \De)=-1$.

If $\De$ is a unit square or a unit with quadratic defect $4\oo$, the form is
anisotropic if and only if the Hasse--Minkowski invariant is $-1$.  We choose
$a = 1 + \wg u$, where $u$ is a unit and $(a, -1) = -1$, provided by 
lemma~\thref{l:dyadic:hmi}.  The form $B(x) = a(x_1^2 + x_2^2) - \De x_3^2$
has discriminant $\De$ and Hasse--Minkowski invariant $(a, a) = (a, -1) = -1$.

We are still using the second method, so we consider only the unramified case.

\begin{prop}
Let $B(x) = x_1^2 - a(x_2^2 - 2 v x_3^2)$, where $a = 1 + 4u$ is a unit with
quadratic defect $4\oo$, and $u$ and $v$ are units.  In the unramified case, 
with $z = q^{-\be}$, $w = zq^{-1}$, and $\abs t = q^{-T}$, we have
\begin{equation*}
  X(\be; 1)
  = \frac{\abs 2}{1 - w}
\end{equation*}
and
\begin{equation*}
  X(\be; t^2)
  = \abs 2 \frac{1 + w}{1 - w^2q^{-1}} 
    + w^{2T} q^{-T} \frac{1 - w^2q^{-2}}{(1 - w)(1 - w^2q^{-1})},
  \qquad\text{if $T \ge 1$}.
\end{equation*}
\end{prop}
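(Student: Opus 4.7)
The plan is to apply the second method of Section~\ref{s:dyadic}, in the style of the propositions for $m = 2$.  Reducing $x_1^2 - a(x_2^2 - 2vx_3^2) \equiv t^2 \bmod 2^{\ell+1}$ modulo~$2$ (using that squaring is a bijection on a characteristic-$2$ residue field) gives $(x_1 + x_2)^2 \equiv t^2$, hence $x_1 \equiv x_2 + t \bmod 2$; substituting $x_1 = x_2 + t + 2b$ and dividing by~$2$ yields
\begin{equation*}
-2u x_2^2 + x_2 t + 2 x_2 b + 2 t b + 2 b^2 + a v x_3^2 \equiv 0 \bmod 2^\ell.
\end{equation*}
For $T = 0$ (so $t$ a unit), a further reduction mod~$2$ gives $x_2 t + v x_3^2 \equiv 0$, which determines $x_2 \bmod 2$ uniquely in terms of $x_3$; since the $x_2$-derivative is $t - 4 u x_2 + 2 b$, a unit, Hensel lifts each solution uniquely to mod~$2^\ell$.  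After the Jacobian $\abs 2$ from the $x_1 \mapsto b$ parameterization, this yields $X_\ell(1) = q^{-\ell-1}$ and hence $X(\be; 1) = \abs 2/(1-w)$.

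For $T \geq 1$, mod~$2$ forces $x_3 \equiv 0$; substitute $x_3 = 2c$ and (with $t = 2t'$) divide by~$2$ once more to obtain
\begin{equation*}
\hat B(x_2, b, c) = -u x_2^2 + x_2 b + b^2 + t' x_2 + 2t' b + 2 a v c^2 \equiv 0 \bmod 2^{\ell-1}.
\end{equation*}
The crucial observation is that the quadratic part $-u x_2^2 + x_2 b + b^2$ is anisotropic mod~$2$: the hypothesis that $a = 1 + 4u$ has quadratic defect $4\oo$ is equivalent to $a$ being a non-square, which in turn is equivalent to $u$ lying outside the image of the Artin--Schreier map $y \mapsto y + y^2$ on the residue field.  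This is precisely the anisotropy assumption behind lemma~\thref{l:dyadic:proj}, so the lemma applies in the variables $(x_2, b)$ with $c$ treated as a parameter.

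For $T = 1$, a direct computation shows that in the notation of lemma~\thref{l:dyadic:proj}, $P(A, B)$ is a unit $\bmod 2$ for every $c \in \oo$, placing us in the lemma's second case with measure $q^{-(\ell-1)} + q^{-\ell}$ in $(x_2, b)$.  Integrating over~$c$ and accumulating the two Jacobians $\abs 2$ from $x_1 \mapsto b$ and $x_3 \mapsto c$ gives $X_\ell(t^2) = q^{-\ell-1} + q^{-\ell-2}$ for $\ell \geq 2$, while $X_0(t^2) = q^{-1}$ and $X_1(t^2) = q^{-2}$ are computed directly.  For $T \geq 2$ (so $t' \equiv 0 \bmod 2$), the anisotropy forces $x_2, b \equiv 0 \bmod 2$; substituting $x_2 = 2X_2$, $b = 2B$ and dividing by~$2$ exposes a further forcing $c \equiv 0 \bmod 2$, and substituting $c = 2C$ and dividing by~$2$ produces exactly $\hat B(X_2, B, C; t/2) \equiv 0 \bmod 2^{\ell-3}$---the same equation with $t$ replaced by $t/2$.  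This yields the recursion $X_\ell(t^2) = q^{-3} X_{\ell-2}(t^2/4)$ for $T \geq 2$ and $\ell \geq 2$; iterating reduces to the $T=1$ tabulation, and summing the two resulting geometric series simplifies algebraically to the claimed closed form.  The principal obstacle is this triple substitution in the $T \geq 2$ step: verifying each forced divisibility, checking that the rescaled equation has \emph{exactly} the same shape with $t \mapsto t/2$, and confirming that three Jacobian factors $\abs 2$ accumulate cleanly into $q^{-3}$ per recursion step.
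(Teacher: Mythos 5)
Your proposal is correct and follows essentially the same route as the paper: the substitution $x_1 = x_2 + t + 2b$, the case split on $T=0$, $T=1$, $T\ge 2$, the appeal to lemma~\thref{l:dyadic:proj} for $t=2$, and the recursion $X_\ell(t^2) = q^{-3}X_{\ell-2}(t^2/4)$ for $4\mid t$ all match the paper's argument. Your explicit justification that the quadratic defect hypothesis on $a = 1+4u$ is exactly the anisotropy of $ux^2+xy+y^2$ over the residue field (via the Artin--Schreier map) is a point the paper leaves implicit, and is a worthwhile addition.
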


\begin{proof}
The equation is $x_1^2 - a(x_2^2 - 2 v x_3^2) = t^2 \bmod 2^{\ell+1}$.
We substitute $x_1 = x_2 + t + 2b$, simplify, and obtain
\begin{equation*}
  2b^2 - x_2t - 2x_2b - 2tb - 2ux_2^2 + a v x_3^2 = 0 \mod 2^\ell.
\end{equation*}

If $t = 1$ and $x_3$ is fixed, we obtain exactly one solution 
$x_2 \bmod 2^\ell$.  Therefore, 
$X_\ell(1) = q^{-1} \cdot q^{-\ell} = q^{-\ell - 1}$.

If $2 \mid t$ and $\ell \ge 1$, we are led to $x_3 = 0 \bmod 2$.   
For such $t$, we have $X_0(t^2) = q^{-1}$ and $X_1(t^2) = q^{-2}$.  If 
$\ell \ge 2$, we have also
\begin{equation*}
  b^2 - x_2 \tfrac t2 - x_2 b - 2 \tfrac t2 b - ux_2^2 + 2 av \tfrac{x_3^2}4
  = 0 \mod 2^{\ell-1}.
\end{equation*}

If $t = 2$, we obtain $q^{-\ell+1} + q^{-\ell}$ solutions with respect to
$b$ and $x_2$, or $q^{-\ell} + q^{-\ell-1}$ with respect to $x_1$ and $x_2$.
Recalling that $x_3 = 0 \bmod 2$, we see that, for $\ell \ge 2$,
$X_\ell(4) = q^{-\ell-1} + q^{-\ell-2}$.

If $4 \mid t$, then we obtain $x_2 = 0 \bmod 2$, so 
$x_1 = x_2 = x_3 = t = 0 \bmod 2$, and we have, for $\ell \ge 2$,
$X_\ell(t^2) = q^{-3} \, X_{\ell-2}(t^2/4)$.
\end{proof}

\begin{prop}\label{p:ex:n:9}
Let $B(x) = x_1^2 - a(x_2^2 - \De x_3^2)$, where $a = 1 + 2 u$ and $\De$ are 
units with quadratic defect $2\oo$, $(a, \De) = -1$, $-a\De = 1 + 2v$, and $u$
and $v$ are units.  In the unramified case, with $z = q^{-\be}$, 
$w = zq^{-1}$, and $\abs t = q^{-T}$,  we have
\begin{equation*}
  X(\be; t^2)
  = \abs 2 \, \frac{1 + wq^{-1}}{1 - w^2q^{-1}}
    + \abs 2 \, w^{2T+1} q^{-T} \frac{1 - w^2q^{-2}}{(1 - w)(1 - w^2q^{-1})}.
\end{equation*}
\end{prop}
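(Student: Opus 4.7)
The plan is to apply the second method of section~\ref{s:dyadic} (valid in the unramified case, $\wg = 2$). Since $a \equiv \De \equiv 1 \mod 2$, reducing $x_1^2 - a(x_2^2 - \De x_3^2) \equiv t^2 \mod 2^{\ell+1}$ modulo $2$ gives $(x_1 + x_2 + x_3)^2 \equiv t^2 \mod 2$, forcing $x_1 \equiv x_2 + x_3 + t \mod 2$ by the Frobenius. Substituting $x_1 = x_2 + x_3 + t + 2b$ (a linear constraint of measure $\abs 2 = q^{-1}$ on $x_1$), using $1 - a = -2u$ and $1 + a\De = -2v$, and dividing by $2$ produces the post-substitution equation
\begin{equation*}
  P := -ux_2^2 + x_2 x_3 - vx_3^2 + 2b^2 + t(x_2 + x_3) + 2b(x_2 + x_3) + 2tb \equiv 0 \mod 2^\ell.
\end{equation*}

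For $T = 0$ I would shift $(x_2, x_3) \mapsto (x_2 - t, x_3 - t)$ to kill the mod-$2$ linear terms, reducing $P$ modulo $2$ to $-u X_2^2 + X_2 X_3 - v X_3^2 + (u+v+1)t^2$. The principal part is anisotropic over the residue field because $(a, \De) = -1$. The key algebraic observation is that $-a\De = 1 + 2v$ combined with $\De = 1 + 2v'$ (with $v'$ a unit, since $\De$ has quadratic defect $2\oo$) forces $u + v + 1 \equiv v' \mod 2$, which is a unit. Lemma~\thref{l:dyadic:proj} then applies uniformly in $b$ (the $b$-dependent terms of $P$ are all divisible by $2$), giving bivariate solution measure $q^{-\ell} + q^{-\ell-1}$ and so $X_\ell(1) = q^{-\ell-1} + q^{-\ell-2}$ for $\ell \ge 1$; combining with $X_0(1) = q^{-1}$ and summing the geometric series yields $X(\be; 1) = \abs 2 (1 + wq^{-1})/(1 - w)$, which agrees with the stated formula at $T = 0$ after combining fractions over $(1-w)(1-w^2 q^{-1})$.

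For $T \ge 1$ the linear terms of $P$ in $(x_2, x_3)$ vanish modulo $2$, so anisotropy of the principal part forces $x_2 \equiv x_3 \equiv 0 \mod 2$; at $\ell = 1$ this (with $b$ free) gives $X_1(t^2) = q^{-3}$. For $\ell \ge 2$, after substituting $x_2 = 2y_2$, $x_3 = 2y_3$ and dividing by $2$, the resulting equation reduces mod $2$ to $b^2 \equiv 0$, forcing $b \equiv 0 \mod 2$; substituting $b = 2b'$ and dividing by $2$ once more produces an equation identical to $P$ except with $t$ replaced by $t/2$ and $\ell$ by $\ell-2$. The three substitutions introduce a Jacobian factor $\abs 2^3 = q^{-3}$, yielding $X_\ell(t^2) = q^{-3} X_{\ell-2}((t/2)^2)$ for $\ell \ge 2$, or equivalently
\begin{equation*}
  X(\be; t^2) = q^{-1} + z q^{-3} + z^2 q^{-3}\, X(\be; (t/2)^2).
\end{equation*}
Solving this affine recurrence in $T$ with multiplier $r = z^2 q^{-3} = w^2 q^{-1}$ gives $X(\be; t^2) = A + (X(\be; 1) - A) r^T$ with fixed point $A = (q^{-1} + zq^{-3})/(1-r) = \abs 2 (1 + wq^{-1})/(1 - w^2 q^{-1})$; the coefficient $X(\be; 1) - A$ simplifies, using $w - w^2 q^{-1} = w(1 - wq^{-1})$ and $(1 + wq^{-1})(1 - wq^{-1}) = 1 - w^2 q^{-2}$, to $\abs 2 w (1 - w^2 q^{-2})/((1-w)(1-w^2 q^{-1}))$, reproducing the stated formula. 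The main obstacle is the algebraic identity $u + v + 1 \equiv v' \mod 2$ in the base case---without it, lemma~\thref{l:dyadic:proj} would give the wrong count and the entire calculation would collapse; every other step is direct bookkeeping of the substitutions and routine series manipulation.
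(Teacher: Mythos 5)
Your proposal is correct and follows essentially the same route as the paper: the same substitution $x_1 = x_2 + x_3 + t + 2b$, the same reduction to Lemma~\thref{l:dyadic:proj} via the anisotropic binary part $ux_2^2 + x_2x_3 + vx_3^2 \bmod 2$, the same counts $X_\ell(1) = q^{-\ell-1}+q^{-\ell-2}$ and recursion $X_\ell(t^2) = q^{-3}X_{\ell-2}(t^2/4)$, and the same series summation. You are in fact more explicit than the paper in verifying that the constant term $P(A,B) \equiv (u+v+1)t^2 \equiv v't^2 \bmod 2$ is a unit when $t$ is, a point the paper's proof leaves implicit.
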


\begin{proof}
The equation is $x_1^2 - ax_2^2 + a\De x_3^2 = t^2 \bmod 2^{\ell+1}$.
Replacing $x_1 = x_2 + x_3 + t + 2b$ and simplifying, we obtain
\begin{equation*}
  2b^2 + x_2 x_3 + x_2 t + 2x_2 b + x_3 t + 2x_3b
  + 2tb -u x_2^2 - v x_3^2 = 0 \bmod 2^\ell.
\end{equation*}
In particular, $X_0(t^2) = q^{-1}$.

If $t = 1$ and $\ell \ge 1$, and recalling we already set $x_1 \bmod 2$, we 
obtain $X_\ell(1) = q^{-\ell-1} + q^{-\ell-2}$.

If $2 \mid t$ and $\ell \ge 1$, we conclude $x_2 = x_3 = 0 \bmod 2$, so 
$X_1(t^2) = q^{-3}$ and, for $\ell \ge 2$, 
$X_\ell(t^2) = q^{-3} \, X_{\ell-2}(t^2/4)$.
\end{proof}

\begin{prop}\label{p:ex:n:3}
Let $B(x) = a(x_1^2 + x_2^2) - x_3^2$, where $a = 1 + 2u$ is a unit with 
quadratic defect $2\oo$, $(a, -1) = -1$, and $u$ is a unit.  In the unramified
case, with $z = q^{-\be}$, $w = zq^{-1}$, and $\abs t = q^{-T}$, we have
\begin{equation*}
  X(\be; t^2)
  = \abs 2 \, \frac{(1 + wq^{-1})(1 - w^{2T+2} q^{-T-1})}{1 - w^2q^{-1}}.
\end{equation*}
\end{prop}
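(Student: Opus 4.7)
My plan is to apply the second method of Section~\thref{s:dyadic}, exactly as in the proofs of Propositions~\thref{p:ex:n:8} and~\thref{p:ex:n:9}. Reducing the equation $a(x_1^2 + x_2^2) - x_3^2 = t^2 \bmod 2^{\ell+1}$ modulo $2$ (and using $a \equiv 1$) gives the linear relation $x_1 + x_2 + x_3 + t \equiv 0 \bmod 2$, suggesting the substitution $x_3 = x_1 + x_2 + t + 2b$. This contributes a factor $\abs 2 = q^{-1}$ and immediately yields $X_0(t^2) = q^{-1}$. After dividing by $2$, the equation becomes
\begin{equation*}
  u x_1^2 - x_1 x_2 + u x_2^2 - (t+2b)(x_1 + x_2) - t(t+2b) - 2b^2 \equiv 0 \bmod 2^\ell.
\end{equation*}

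This is a quadratic polynomial in $x_1, x_2$ of the form required by Lemma~\thref{l:dyadic:proj}: the cross-term coefficient $-1$ is odd, and the quadratic part $u x_1^2 - x_1 x_2 + u x_2^2$ is anisotropic modulo $2$ because $(a, a) = (a, -a)(a, -1) = -1$. Evaluating the polynomial at its linear coefficients, one finds
\begin{equation*}
  P\bigl(-(t+2b), -(t+2b)\bigr) = 2\bigl(bt + b^2 + u(t+2b)^2\bigr) \equiv 0 \bmod 2,
\end{equation*}
so Lemma~\thref{l:dyadic:proj} forces $x_1 \equiv x_2 \equiv t \bmod 2$ whenever $\ell \geq 1$.

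The computation then splits by the parity of $t$. When $t$ is a unit ($T = 0$), I write $x_i = -(t+2b) + 2 X_i'$, substitute, and divide once more by $4$; for $\ell \geq 2$ the surviving congruence reduces to $b^2 + b \equiv u \bmod 2$, which has no solution precisely by the Artin--Schreier characterization of $(a, -1) = -1$ recalled just before Lemma~\thref{l:dyadic:hmi}. Hence $X_\ell(1) = 0$ for $\ell \geq 2$, while direct counting gives $X_1(1) = q^{-3}$. When $t$ is not a unit ($T \geq 1$), all of $x_1, x_2, x_3$ are forced into $2\oo$; substituting $x_i = 2 y_i$ and $t = 2s$ produces $a(y_1^2 + y_2^2) - y_3^2 \equiv s^2 \bmod 2^{\ell-1}$, yielding the recursion $X_\ell(t^2) = q^{-3} X_{\ell-2}((t/2)^2)$ for $\ell \geq 2$, together with $X_1(t^2) = q^{-3}$ obtained directly. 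Summing $\sum_\ell z^\ell X_\ell(t^2)$ as two interleaved truncated geometric progressions in $z^2 q^{-1}$ yields the stated closed form. The main obstacle is verifying that, in the $t = 1$ case, the surviving obstruction is \emph{exactly} $b^2 + b - u$ modulo $2$, so that the hypothesis $(a, -1) = -1$ is invoked in its precise Artin--Schreier form; without this identification the argument would fail to distinguish the anisotropic ternary form from its isotropic counterparts of the same discriminant.
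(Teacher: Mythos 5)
Your proof is correct and follows essentially the same route as the paper's: the paper merely normalizes the equation to $x_1^2 + x_2^2 - a x_3^2 = a t^2$ and solves for $x_1$ rather than $x_3$, but the resulting case analysis --- Lemma~\thref{l:dyadic:proj} forcing $x_1 \equiv x_2 \equiv t \bmod 2$, the Artin--Schreier obstruction $b^2 + b - u \equiv 0 \bmod 2$ killing $X_\ell(1)$ for $\ell \ge 2$, the count $X_1(t^2) = q^{-3}$, and the recursion $X_\ell(t^2) = q^{-3}X_{\ell-2}(t^2/4)$ for $2 \mid t$ --- is identical. The only blemish is the phrase ``geometric progressions in $z^2 q^{-1}$'': the common ratio produced by the recursion is $z^2 q^{-3} = w^2 q^{-1}$, consistent with the denominator $1 - w^2 q^{-1}$ in the statement.
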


\begin{proof}
The equation is $x_1^2 + x_2^2 - a \De x_3^2 = a t^2 \bmod 2^{\ell+1}$.
Replacing $x_1 = x_2 + x_3 + t + 2b$ and simplifying, we obtain
\begin{equation*}
  x_2^2 + 2b^2 + x_2x_3 + x_2t + 2x_2b + x_3t + 2x_3b + 2tb - u x_3^2 - ut^2
  = 0 \mod 2^\ell.
\end{equation*}
In particular, $X_0(t^2) = q^{-1}$.

If $t = 1$ and $\ell = 1$, lemma~\thref{l:dyadic:proj} tells us 
$x_2 = x_3 = 1 \bmod2$ and $X_1(1) = q^{-3}$.

If $t = 1$ and $\ell = 2$, taking into account that $x_2 = x_3 = 1 \bmod2$,
we see the equation is equivalent to
\begin{equation*}
  b^2 -b - u = 0 \mod 2.
\end{equation*}
This equation has no solution, therefore, $X_\ell(1) = 0$ for $\ell \ge 2$.

If $2 \mid t$ and $\ell \ge 1$, then the equation leads to
$x_2 = x_3 = t = x_1 = 0 \bmod 2$.  This means $X_1(t^2) = q^{-3}$
and $X_\ell(t^2) = q^{-3} X_{\ell-2}(t^2/4)$ if $\ell \ge 2$.
\end{proof}

\begin{prop}
Let $B(x) = a(x_1^2 + x_2^2) - \De x_3^2$, where $\De = 1 + 4u$ is a unit with
quadratic defect $4\oo$, $a = 1 + 2u$ is a unit with quadratic defect $2\oo$,
$u$ is a unit, and $(a, -1) = -1$.  In the unramified case, with 
$z = q^{-\be}$, $w = zq^{-1}$, and $\abs t = q^{-T}$, we have
\begin{equation*}
  X(\be; t^2)
  = \abs2 \, \frac{1 + wq^{-1}}{1 - w^2q^{-1}}
    + \abs 2 \, w^{2T+2}q^{-T-1} \,
      \frac{(1 + w)(1 - wq^{-1})}{(1 - w)(1 - w^2q^{-1})}.
\end{equation*}
\end{prop}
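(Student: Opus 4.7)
The plan is to apply the second method of section \ref{s:dyadic}, following the pattern of propositions \thref{p:ex:n:3} and \thref{p:ex:n:9}. Starting from $a(x_1^2+x_2^2) - \De x_3^2 \equiv t^2 \pmod{2^{\ell+1}}$, the mod-$2$ equation $(x_1+x_2+x_3)^2 \equiv t^2$ (using $a \equiv \De \equiv 1$) prompts the substitution $x_1 = x_2 + x_3 + t + 2b$. With $a-\De = -2u$ and $a-1 = 2u$, division by $2$ gives
\begin{equation*}
  a x_2^2 - u x_3^2 + u t^2 + a x_2 x_3 + a x_2 t + a x_3 t + 2ab(x_2+x_3+t) + 2ab^2 \equiv 0 \pmod{2^\ell},
\end{equation*}
so $X_0(t^2) = q^{-1}$ is immediate.

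For $T \ge 1$, the mod-$2$ reduction is $x_2^2 + x_2 x_3 + u x_3^2 \equiv 0$; the hypothesis $(a,-1) = -1$ is equivalent (as in the proof of lemma \thref{l:dyadic:hmi}) to $y^2 + y + u$ having no root in the residue field, forcing this form to be anisotropic mod $2$. Hence $x_2 \equiv x_3 \equiv 0 \bmod 2$, and then $x_1 \equiv 0 \bmod 2$ as well. Rescaling by $x = 2y$, $t = 2s$ yields $X_1(t^2) = q^{-3}$ and the recursion $X_\ell(t^2) = q^{-3} X_{\ell-2}((t/2)^2)$ for $\ell \ge 2$, exactly as in proposition \thref{p:ex:n:3}; equivalently, $X(\be; t^2) = q^{-1} + w q^{-2} + w^2 q^{-1}\, X(\be; (t/2)^2)$.

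For $T = 0$ with $t = 1$, lemma \thref{l:dyadic:proj} applied with $A = B = 1$ to $P(x_2, x_3) = x_2^2 + x_2 x_3 + u x_3^2 + x_2 + x_3 + u$ gives $P(1,1) = 4 + 2u \equiv 0$, so $x_2 \equiv x_3 \equiv 1 \bmod 2$ and $X_1(1) = q^{-3}$. Substituting $x_i = 1 + 2y_i$ and reducing the divided equation mod $4$, the constants sum to $4 + 8u \equiv 0$, the $y_i$ linear terms each sum to $4 y_i \equiv 0$, and the mixed $2ab(x_2+x_3+t) + 2ab^2$ contributions become $2b + 2b^2$ mod $4$, so the equation collapses to $b + b^2 \equiv 0 \bmod 2$ with solutions $b \in \{0, 1\}$, yielding $X_2(1) = 2q^{-4}$.

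The main obstacle is pushing this $T = 0$ analysis to all $\ell \ge 3$. I expect the pattern $X_\ell(1) = 2q^{-\ell-2}$ to hold by induction: writing $b = b_0 + 2c$ and expanding the equation mod $8$, the next residual constraint is linear in $c$ with unit coefficient (determining $c \bmod 2$ in terms of $y_2, y_3$), and analogous constraints should arise on higher refinement variables at each subsequent step, each contributing a single factor $q^{-1}$ in measure. Once the $X_\ell$ are in hand, assembling $\sum_\ell z^\ell X_\ell(t^2)$—computing $X(\be; 1)$ directly from the $T = 0$ pattern, then iterating the $T \ge 1$ recursion above to produce a factor $(w^2 q^{-1})^{T+1}$—gives the stated closed form after simplification, with the $T$-independent first fraction arising as the stable tail and the $T$-dependent second fraction as the geometric adjustment.
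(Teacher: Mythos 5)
Your proposal is correct and follows essentially the same route as the paper's proof: the substitution $x_1 = x_2 + x_3 + t + 2b$, the anisotropy of $x^2 + xy + uy^2$ modulo $2$ coming from $(a,-1) = -1$, the recursion $X_\ell(t^2) = q^{-3}X_{\ell-2}(t^2/4)$ for $2 \mid t$, and the values $X_0(t^2) = q^{-1}$, $X_1(1) = q^{-3}$, $X_2(1) = 2q^{-4}$ all match (the paper merely clears the unit $a$ first, writing the equation as $x_1^2 + x_2^2 - a\De x_3^2 = a t^2$, which changes the exact coefficients but not the structure). The one step you leave as an expectation, $X_\ell(1) = 2q^{-\ell-2}$ for $\ell \ge 3$, does hold by exactly the mechanism you describe --- after the second division the equation is quadratic in $b$ with unit linear coefficient, so each of the two residues $b \equiv 0, 1 \bmod 2$ refines uniquely at every level, contributing $2q^{-(\ell-1)}$ in the $b$-variable --- and the paper's own proof is no more detailed at that point.
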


\begin{proof}
The equation is $x_1^2 + x_2^2 - a \De x_3^2 = a t^2 \mod 2^{\ell+1}$.
Replacing $x_1 = x_2 + x_3 + t + 2b$ and simplifying, we obtain
\begin{equation*}
  x_2^2 + 2b^2 + x_2x_3 + x_2t + 2x_2b + x_3t + 2x_3b + 2tb
  - u(3 + 4u) x_3^2 - ut^2 = 0 \mod 2^\ell.
\end{equation*}
Clearly, $X_0(t^2) = q^{-1}$.

If $t = 1$, the equation, reduced modulo $4$, is equivalent to
\begin{equation*}
  (x_2 + 1)^2 + (x_2 + 1)(x_3 - 1) + 2b(b + x_2 + x_3 + 1)
  + u (x_3 - 1)^2 + 2 u(x_3 - 1) = 0 \mod 4.
\end{equation*}
If $\ell \ge 1$, we must have $x_2 = x_3 = 1 \bmod2$, so $X_1(1) = q^{-3}$.
If $\ell \ge 2$, we must also have $b^2 = b \bmod2$, that is, $b = 0 \bmod2$
or $b = 1 \bmod2$, which leads to $X_2(1) = 2q^{-4}$.  Therefore,
$X_\ell(1) = 2q^{-\ell-2}$ for $\ell\ge 2$.

If $2 \mid t$, the original equation yields $x_2 = x_3 = t = x_1 = 0 \bmod2$.  
Therefore, $X_1(t^2) = q^{-3}$ and $X_\ell(t^2) = q^{-3} X_{\ell-2}(t^2/4)$ if 
$\ell \ge 2$.
\end{proof}

\section{Even primes---$m=4$}\label{s:even4}

In the $m = 4$ case, we have only one equivalence class of anisotropic forms.

\begin{prop}\label{p:ex:n:10}
Let $B(x) = x_1^2 + x_2^2 - a(x_3^2 + x_4^2)$, where $a = 1 + 2u$, $u$ is a 
unit, and $(a, -1) = -1$.  In the unramified case, with $z = q^{-\be}$, 
$w = zq^{-1}$, and $\abs t = q^{-T}$, we have
\begin{equation*}
  X(\be; 1)
  = \frac{\abs2}{1 - w}
\end{equation*}
and
\begin{equation*}
  X(\be; t^2)
  = \frac{\abs 2}{1 - wq^{-1}}
    + w^{2T} q^{-2T} \frac{1 - wq^{-2}}{(1 - w)(1 - wq^{-1})},
  \qquad\text{if $T \ge 1$}.
\end{equation*}
\end{prop}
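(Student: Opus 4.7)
The plan is to follow the template of the preceding propositions: perform a substitution that resolves the $\bmod 2$ congruence, then analyze the resulting equation level by level. Since $a \equiv 1 \bmod 2$ and the residue field has characteristic $2$, reducing $x_1^2 + x_2^2 - a(x_3^2 + x_4^2) \equiv t^2 \bmod 2$ gives $(x_1 + x_2 + x_3 + x_4)^2 \equiv t^2$, which by perfectness is equivalent to $x_1 + x_2 + x_3 + x_4 \equiv t \bmod 2$. I would substitute $x_1 = x_2 + x_3 + x_4 + t + 2b$ with $b \in \oo$ and divide by $2$, obtaining a reduced equation $R \equiv 0 \bmod 2^\ell$ in the four variables $b, x_2, x_3, x_4 \in \oo$; this already gives $X_0(t^2) = q^{-1}$.

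For $t = 1$, viewing $R \bmod 2$ as a quadratic in $x_2$ and applying an Artin--Schreier style substitution (as in the proofs of propositions~\thref{p:ex:n:8} and~\thref{p:ex:n:3}) pins down exactly one residue per level. Induction on $\ell$ then yields $X_\ell(1) = q^{-\ell-1}$, which sums to the claimed $X(\be; 1) = \abs 2/(1-w)$.

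For $T \ge 1$, the terms involving $t$ vanish mod $2$, so at $\ell = 1$ the equation reduces to
\begin{equation*}
  Q(x_2,x_3,x_4) = x_2^2 + x_2(x_3+x_4) + u(x_3^2+x_4^2) + x_3 x_4 \equiv 0 \bmod 2.
\end{equation*}
The hypothesis $(a,-1) = -1$ translates in the unramified dyadic setting (as in the proof of lemma~\thref{l:dyadic:hmi}) to $\mathrm{Tr}(u) = 1$ in the residue field, and a short case analysis on whether $x_3 + x_4 \equiv 0$, combined with the Artin--Schreier obstruction, then shows that the only solutions have $x_2 \equiv x_3 \equiv x_4 \bmod 2$, giving $X_1(t^2) = q^{-3}$. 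For $\ell \ge 2$, I would partition the solution set into the piece with every $x_i \in 2\oo$ (which after $x_i = 2y_i$ contributes $q^{-4} X_{\ell-2}(t^2/4)$) and a transient piece where some $\abs{x_i} = 1$; summing the resulting series in $z$ then produces the closed form.

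The main obstacle is the boundary case $T = 1$. For $T \ge 2$, the anisotropy lemma~\thref{l:even:aniso} combined with $\ord B(x) \ge 3$ forces $\ord x_i \ge 1$ for every $i$, yielding a clean recursion $X_\ell(t^2) = q^{-4} X_{\ell-2}(t^2/4)$ for all $\ell \ge 2$. At $T = 1$, however, one only has $\ord B(x) = 2$, and the lemma yields merely $\abs{x_i} \le 1$; the ``stray'' solutions with some $\abs{x_i} = 1$ must be enumerated directly and are exactly what produces the transient term $\abs 2 \, w^{2T} q^{-2T}(1-wq^{-2})/((1-w)(1-wq^{-1}))$ appearing alongside the stable piece $\abs 2/(1 - wq^{-1})$ in the final answer.
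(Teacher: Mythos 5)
Your overall strategy is the paper's: resolve the mod-$2$ congruence by a substitution of the form $x_1 = x_2+x_3+x_4+t+2b$ (the paper uses the equivalent parametrization $x_1 = Z+t$, $x_2 = x_3+x_4+Z+2b$), divide by $2$, apply lemma~\thref{l:dyadic:proj}, and set up the recursion $X_\ell(t^2)=q^{-4}X_{\ell-2}(t^2/4)$. Your identification of where the difficulty sits is also correct: for $4\mid t$ the anisotropy lemma forces all $x_i$ even and the recursion is clean, while at $t=2$ there is an extra family of solutions with a unit coordinate. The values $X_0(t^2)=q^{-1}$ and $X_1(t^2)=q^{-3}$ for even $t$, the mod-$2$ analysis of $Q$ via $\mathrm{Tr}(u)=1$, and the reduction of every $T\ge2$ to the $t=2$ case all check out.

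The gap is that the one computation that actually produces the second displayed formula---the enumeration of the stray solutions at $t=2$, $\ell\ge2$---is only announced, not performed; and the term you say it should produce carries a spurious factor of $\abs2$ (the proposition's transient term is $w^{2T}q^{-2T}(1-wq^{-2})/\bigl((1-w)(1-wq^{-1})\bigr)$, with no $\abs2$), so as written the plan is aimed at a wrong target and nothing in the proposal would catch the discrepancy. To fill the gap: with $x_2\equiv x_3\equiv x_4\equiv Z\bmod2$ forced by lemma~\thref{l:dyadic:proj}, substituting $x_3=Z+2X$, $x_4=Z+2Y$ and dividing once more leaves $uZ^2=Z\,\tfrac t2\bmod2$, so besides $Z\equiv0$ (your all-even piece, contributing $q^{-4}X_{\ell-2}(1)=q^{-\ell-3}$) there is the branch $Z\equiv u^{-1}\bmod2$, in which $Z$ is determined modulo $2^{\ell-1}$ and which contributes $q^{-\ell-2}$; hence $X_\ell(4)=q^{-\ell-2}+q^{-\ell-3}$, which is exactly what fixes the stated coefficient. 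A smaller imprecision: for $t=1$ the reduced equation, viewed as $x_2^2+(x_3+x_4+1)x_2+d=0\bmod2$, does not pin down ``exactly one residue per level'' when $q>2$ (the Artin--Schreier fibre has $0$ or $2$ points whenever $x_3+x_4\not\equiv1$); the count $X_\ell(1)=q^{-\ell-1}$ is nevertheless correct, either by the paper's projective-line count in $(x_3,x_4)$ when $Z$ is a unit, or by checking that every mod-$2$ solution is a smooth point and therefore refines with density $q^{-1}$ per level.
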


\begin{proof}
We know $x_1 + x_2 + x_3 + x_4 + t = 0 \bmod 2$.  Replacing
$x_1 = Z + t$ and $x_2 = x_3 + x_4 + Z + 2b$ into 
$B(x) = t^2 \bmod 2^{\ell+1}$ and simplifying, we obtain
\begin{equation*}
  -ux_3^2 + x_3x_4 - ux_4^2 + (Z+4b)x_3 + (Z+4b)x_4 + Z^2 + Zt + 4bZ + 4b^2
  = 0 \mod 2^\ell.
\end{equation*}
As usual, $X_0(t^2) = q^{-1}$.

If $t = 1$, we only need to reduce the equation modulo $4$, we obtain
\begin{equation}\label{e:even4:}
  -ux_3^2 + x_3x_4 - ux_4^2 + Zx_3 + Zx_4 + Z^2 + Zt
  = 0 \mod 4.
\end{equation}
Replacing $x_3 = x_4 = Z \bmod 2$, we obtain $Zt \bmod2$.

If $t = 1$, $\ell = 1$, and $Z$ is a unit, then the solution set (with 
respect to $x_3$ and $x_4$, and modulo $2$) has measure $q^{-1} + q^{-2}$.  If
$Z$ is not a unit, then $x_3 = x_4 = 0 \bmod 2$.  Therefore (we must not forget
that $x_2$ was set modulo $2$),
\begin{equation*}
  X_1(1)
  = q^{-1} (1 - q^{-1})(q^{-1} + q^{-2}) + q^{-4}
  = q^{-2}.
\end{equation*}

If $t = 1$ and $\ell = 2$, and $Z$ is a unit, then the solution set (with 
respect to $x_3$ and $x_4$, and modulo $4$) has measure $q^{-2} + q^{-3}$.  If
$Z$ is not a unit, then $x_3 = x_4 = 0 \bmod 2$ and $Z = 0 \bmod 4$.  Therefore 
(again, $x_2$ was set modulo $2$),
\begin{equation*}
  X_2(1)
  = q^{-1} (1 - q^{-1})(q^{-2} + q^{-3}) + q^{-5}
  = q^{-3}.
\end{equation*}

More generally, $X_\ell(1) = q^{-\ell-1}$.

If $2 \mid t$, we return to equation \eqref{e:even4:}.  
Lemma~\thref{l:dyadic:proj} tells us $x_3 = x_4 = Z \bmod2$, 
so $X_1(t^2) = q^{-3}$ (again, we must not forget $x_2$ was set modulo $2$).  
For $\ell \ge 2$, we substitute $x_3 = Z + 2X$ and $x_4 = Z + 2Y$ and simplify:
\begin{equation*}
  -uZ^2 + Z \tfrac t2
  = 0 \mod 2.
\end{equation*}

If $t = 2$ and $Z = 0 \bmod 2$, we conclude 
$x_1 = x_2 = x_3 = x_4 = t = 0 \bmod2$, so the contribution (in the equation
modulo $2^\ell$) is $q^{-4} X_{\ell-2}(1) = q^{-3-\ell}$.  If $t = 2$ and 
$Z = u^{-1} \bmod2$, then $Z$ is determined modulo $2^{\ell-1}$ (recall we
divided by $2$ in most recent simplification), so the contribution is 
$q^{-\ell+1}q^{-1}q^{-1}q^{-1} = q^{-2-\ell}$.  Therefore, 
$X_\ell(4) = q^{-2-\ell} + q^{-3-\ell}$.

If $4 \mid t$ (still for $\ell \ge 2$), we may also conclude $Z = 0 \bmod2$,
so $X_\ell(t^2) = q^{-4} X_{\ell-2}(t^2/4)$.
\end{proof}

\section{Some examples in $k = \QQ$}\label{s:examples}

In order to determine the global period in all cases, we still need the local
factors at some even primes (the ramified cases we could not address here), the
missing normalization constant in proposition \thref{p:odd:lpi}, and
the local factors at archimedean primes.

Therefore, for these examples we will ignore multiplicative constants and 
consider only $k = \QQ$ and the standard form 
$\sum_{i=1}^{n+2} x_i^2 - x_{n+3}^2$ with signature $(n+2, 1)$ on
$k^{n+2}\oplus k \cdot e_-$.

At the archimedean place, $k_v = \RR$ and the restriction of the form 
to $\RR^{n+2}$ is anisotropic.  Hence, as mentioned in the introduction, the 
local factor of the period is simply 
$\vol(\Sv\lqu\Hv) = \vol(\gO(n+1, \RR)\lqu\gO(n+2, \RR))$, a multiplicative
constant.

At non-archimedean places, we have simply $q_v = p$ (where $p$ is the
prime).  As the discriminant is $\De = \pm1$, there are no bad odd primes.

If $\De = 1$, the associated quadratic character is the trivial character 
$\chi_0$ (its 
$L$--function is the Riemann zeta function).  If $\De = -1$, the associated
quadratic character is the character $\chi_1$ given by $\chi_1(p) = 1$ if 
$p = 1 \bmod 4$, $\chi_1(p) = -1$ if $p = 3 \bmod 4$, or $\chi_1(p) = 0$ if 
$p$ is even.  Also,
\begin{equation*}
  \ze(s)
  = \prod_p \frac1{1 - p^s}
  \qquad\text{and}\qquad
  L(s, \chi)
  = \prod_p \frac1{1 - \chi(p)\, p^s}.
\end{equation*}

We will also limit our attention to periods $(E_\ph, 1)_H$, that is, periods
of the Eisenstein series alone, rather than against a cuspidal $F$, in
which case the local parameters are $\be = 0$.

With $\al = (n+1) s$, we saw before \cite{Bo} that, up to multiplicative 
constants and correction factors at $p = 2$ (determined in this paper), the
global period is
\begin{align}\label{e:g-per-odd}
  (E_\ph, 1)_H
  &= \frac{\ze(\al - n)}
      {L(\al - \lfloor\frac n2\rfloor, \chi)},
      && \text{if $n$ is odd;}\\\label{e:g-per-even}
  (E_\ph, 1)_H
  &= \frac{\ze(\al - n)}
      {\ze(2\al - n) / L(\al - \frac n2, \chi)},
      && \text{if $n$ is even}
\end{align}
(where $\chi = \chi_0$ when $\De = 1$, and $\chi = \chi_1$ when $\De = -1$).

Say $B^{n+2}$ is the original form with signature $(n+2,0)$ and $B^n$ is the 
original form $B$ (from the discussion, in section~\ref{s:setup}, of the 
measure on $\Th\lqu H$).  Say also that $B^{n-2k}$ is the form obtained after 
taking $k$ hyperbolic planes away (for the dimension reduction cited at the 
end of section \ref{s:setup}), until we get an anisotropic form $B^m$, with 
$n = m + 2k$---those are the forms whose $X$ functions we computed in sections 
\ref{s:even0} through \ref{s:even4}.

All those forms have the same discriminant $\De$.  However, their 
Hasse--Minskowski invariants are not the same.  Let $\hmi B$ denote the
Hasse--Minkowski invariant of a form $B$.  In general \cite{Ca,Om}, 
if $B$ is the sum of two forms $C$ and $D$, then 
$\hmi B = (\det C, \det D)\cdot\hmi C\cdot\hmi D$.

A hyperbolic plane has determinant $-1$ and invariant $(1, -1) = 1$.  Also,
$B^{n+2}$ has invariant $1$ and determinant $1$, so 
$\det B^{n-2k} = (-1)^{k+1}$ and $(-1, \det B^{n-2k}) = (-1)^{k+1}$.  This
means that with even $k$ we change the sign of the invariant, and with odd $k$ 
we keep it.  That is, starting with $\hmi B^{n+2}$, and taking one hyperbolic 
plane away at a time, we obtain $1$, $-1$, $-1$, $1$, and then repeat with 
period four.

Applying the discussions at the beginning of sections \ref{s:even2} through
\ref{s:even4} to our current case, we see that $B^2$ is anisotropic if and 
only if $\De = -1$, that $B^3$ is anisotropic if and only if $\hmi B^3 = -\De$,
and that $B^4$ is anisotropic if and only if $\De = \hmi B^4 = 1$.

We thus obtain the information in table~\ref{t:X}, for $n \ge 3$.  With 
$n < 3$, $H$ would be anisotropic at $p = 2$, the local factor would be a 
constant, and the results in this paper would not be used.  Taking our choice 
$\be = 0$ and the dimension reduction at the end of section~\ref{s:setup}
into account, we use $z = q^{-k}$ and $w = q^{-k-1}$ and abbreviate 
$u = q^{-n}$ (this is always what is raised to the power $T$).  Additionally,
as the local factor is obtained by integration of this $X$ with respect to $t$
(that is, in terms of these formulas, a summation with respect to $T = \ord t$)
and we are missing a multiplicative constant, we multiply by a common factor
so that the result is as close as possible to the form $1 - A u^T$.

\begin{table}
\caption{Taking $k$ hyperbolic planes from $B^n$, we obtain the anisotropic 
form $B^m$, whose $X$ function with list, as well as the respective 
proposition.}\label{t:X}

\begin{tabular}{ccccccc}\toprule
$n$         & $\De$ & $k$         & $m$ & $\hmi B^m$ & prop. &
$X^m(\be; t^2)$
\\\midrule
$3 + 8\ell$  & $ 1$ & $4\ell$     & $3$ & $-1$ & \eqref{p:ex:n:3} &
$1 - u u^T$;
\\\addlinespace
$4 + 8\ell$  & $-1$ & $4\ell + 1$ & $2$ & $-1$ & \eqref{p:ex:n:4} &
$1 - w u^T$;
\\\addlinespace
$5 + 8\ell$  & $-1$ & $4\ell + 2$ & $1$ & $ 1$ & \eqref{p:ex:n:5} &
$1 - \dfrac{u + z}{1 + z} u^T$;
\\\addlinespace
$6 + 8\ell$  & $ 1$ & $4\ell + 3$ & $0$ & $ 1$ & \eqref{p:ex:n:6} &
$\begin{cases}
  1, &\text{if $T=0$},\\
  0, &\text{otherwise};
\end{cases}$
\\\addlinespace
$7 + 8\ell$  & $ 1$ & $4\ell + 3$ & $1$ & $ 1$ & \eqref{p:ex:n:7} &
$1 + \dfrac{1 - w - u}{1 + w - u} u u^T$;
\\\addlinespace
$8 + 8\ell$  & $-1$ & $4\ell + 3$ & $2$ & $ 1$ & \eqref{p:ex:n:8} &
$1 + w u^T$;
\\\addlinespace
$9 + 8\ell$  & $-1$ & $4\ell + 3$ & $3$ & $ 1$ & \eqref{p:ex:n:9} &
$1 - \dfrac{u - w}{1 - w} u^T$;
\\\addlinespace
$10 + 8\ell$ & $ 1$ & $4\ell + 3$ & $4$ & $ 1$ & \eqref{p:ex:n:10} &
$\begin{cases}
  \dfrac{1 - wq^{-1}}{1 - w}, &\text{if $T = 0$},\\
  1 + \dfrac{1 - wq^{-2}}{q^{-1}(1 - w)}  u^T, &\text{otherwise}.
\end{cases}$
\\\addlinespace\bottomrule
\end{tabular}
\end{table}

That choice simplifies substantially the computation of $\Pi$.  Indeed, with
$a = q^{-\al}$, the definition of $\Pi$ (from section \ref{s:setup}, using 
the multiplicative measure) is
\begin{equation*}
  \Pi(\al, \be)
  = \int_{\kx\cap\oo} \abs t^\al X(\be; t^2) \dd t
  = \sum_{T \ge 0} X(\be; t^2) a^T.
\end{equation*}
If, up to the common factors we dropped, $X = 1 - A u^T$, this becomes
\begin{equation*}
  \Pi(\al, \be)
  = \sum_{T \ge 0} \bigl(a^T - A (au)^T\bigr)
  = \frac1{1 - a} - \frac A{1 - au}
  = \frac{(1 - A) - a (u - A)}{(1 - a)(1 - au)}.
\end{equation*}
This becomes even simpler when $A = \frac{u - v}{1 - v}$ for some $v$, as in 
that case we obtain
\begin{equation*}
  \Pi(\al, \be)
  = \frac{(1 - u)(1 - av)}{(1 - a)(1 - au)(1 - v)}
  \stackrel{\substack{\text{up to}\\\text{constant}}\vphantom{\Bigr)}} {=}
  \frac{1 - av}{(1 - a)(1 - au)}.
\end{equation*}
Table~\ref{t:Pi} summarizes the results.

\begin{table}
\caption{Taking $k$ hyperbolic planes from $B^n$, we obtain an anisotropic 
form $B^m$.  With $z = q^{-k}$, $w = q^{-k-1}$, and 
$u = q^{-n} = q^{-m-2k} = z^2q^{-m}$, we obtain $\Pi^m(al, k)$ from 
$X^m(k; t^2)$.  When relevant, we indicate the choice of $v$ that permits the 
simplification discussed in the 
text.  We drop any common factors that do not depend on $a$.}\label{t:Pi}

\begin{tabular}{cccc@{}c@{}}\toprule
$n$          & $k$         & $m$ & $v$ & $\Pi^m(\al, k)$
\\\midrule
$3 + 8\ell$  & $4\ell$     & $3$ &  &
$\dfrac1{(1 - a)(1 - au)} = Z(\al) \, Z(\al + n)$;
\\\addlinespace
$4 + 8\ell$  & $4\ell + 1$ & $2$ & $-w$ &
$\dfrac{1 + aw}{(1 - a)(1 - au)}
 = \dfrac{Z(\al) \, Z(\al + n) \, Z(\al + \frac n2)}{Z(2\al + n)}$;
\\\addlinespace
$5 + 8\ell$  & $4\ell + 2$ & $1$ & $-z$ &
$\dfrac{1 + az}{(1 - a)(1 - au)}
 = \dfrac{Z(\al) \, Z(\al + n) \, Z(\al + k)}{Z(2\al + n - 1)}$;
\\\addlinespace
$6 + 8\ell$  & $4\ell + 3$ & $0$ & &
$1$;
\\\addlinespace
$7 + 8\ell$  & $4\ell + 3$ & $1$ & $\frac{2u}{1+w+u}$ &
$\dfrac{1 - av}{(1 - a)(1 - au)}
 = \dfrac{Z(\al) \, Z(\al + n)}{Z(\al - \log_q v)}$;
\\\addlinespace
$8 + 8\ell$  & $4\ell + 3$ & $2$ & $w$ &
$\dfrac{1 - aw}{(1 - a)(1 - au)}
 = \dfrac{Z(\al) \, Z(\al + n)}{Z(\al + k + 1)}$;
\\\addlinespace
$9 + 8\ell$  & $4\ell + 3$ & $3$ & $w$ &
$\dfrac{1 - aw}{(1 - a)(1 - au)}
 = \dfrac{Z(\al) \, Z(\al + n)}{Z(\al + k + 1)}$;
\\\addlinespace
$10 + 8\ell$ & $4\ell + 3$ & $4$ & &
$\dfrac{(1 - aw)(1 + awq^{-1})}{(1 - a)(1 - au)}
 = \dfrac{Z(\al) \, Z(\al + n) \, Z(\al + \frac n2)}
         {Z(\al + k + 1) \, Z(2\al + n)}$.
\\\addlinespace\bottomrule
\end{tabular}
\end{table}

Recall now that in proposition \thref{p:odd:lpi} we identified the local factor 
and immediately afterward, we saw that the dimension reduction allows us, 
when $k$ hyperbolic planes have been taken away and $B^m = B^{n-2k}$ is
anisotropic, to draw a connection between $\Pi^n$ and $\Pi^m$.  We conclude 
that, when $\be = 0$, the local factor is
\begin{equation*}
  \frac{\Pi^m(\al - n, k)}{q^{-\al} \, Z(\al)}
\end{equation*}
up to a multiplicative constant.

Finally, equations \eqref{e:g-per-odd} and \eqref{e:g-per-even} give us the
(uncorrected) global period.  Recall that when $\De = 1$ we use $\chi = \chi_0$ 
and the local factor of $L(\cdot, \chi) = \ze(\cdot)$ is $Z(\cdot)$, while
when $\De = -1$ we use $\chi = \chi_1$ and the local factor of $L(\cdot, \chi)$
is $1$.  Table~\ref{t:per} summarizes the conclusions.

\begin{table}
\caption{For each $n$, we list the uncorrected global period as well as the
correction factor at $p = 2$, ignoring multiplicative constants independent of 
$\al$.}\label{t:per}

\begin{tabular}{ccc}\toprule
$n$          & uncorrected period & correction factor at $p = 2$
\\\midrule
$3 + 8\ell$  & $\dfrac{\ze(\al - n)}{\ze(\al - 4\ell -1)}$
& $\dfrac{Z(\al - 4\ell - 1)}{q^{-\al}}$;
\\\addlinespace
$4 + 8\ell$  & $\dfrac{\ze(\al - n)\, L(\al - \frac n2, \chi)}{\ze(2\al - n)}$
& $\dfrac{Z(\al - \frac n2)}{q^{-\al}}$;
\\\addlinespace
$5 + 8\ell$  & $\dfrac{\ze(\al - n)}{L(\al - 4\ell -2, \chi)}$
& $\dfrac{Z(\al - 4\ell - 3)}{Z(2\al - n - 1) \, q^{-\al}}$;
\\\addlinespace
$6 + 8\ell$  & $\dfrac{\ze(\al - n)\, \ze(\al - \frac n2)}{\ze(2\al - n)}$
& $\dfrac{Z(2\al - n)}{Z(\al)\, Z(\al - n) \, Z(\al - \frac n2) \, q^{-\al}}$;
\\\addlinespace
$7 + 8\ell$  & $\dfrac{\ze(\al - n)}{\ze(\al - 4\ell -1)}$
& $\dfrac{Z(\al - 4\ell - 1)}
         {Z(\al - n - 1 - \log_q(1+q^{-4\ell-4}+q^{-n})) \, q^{-\al}}$;
\\\addlinespace
$8 + 8\ell$  & $\dfrac{\ze(\al - n)\, L(\al - \frac n2, \chi)}{\ze(2\al - n)}$
& $\dfrac{Z(2\al - n)}{Z(\al - \frac n2) \, q^{-\al}}$;
\\\addlinespace
$9 + 8\ell$  & $\dfrac{\ze(\al - n)}{L(\al - 4\ell -2, \chi)}$
& $\dfrac{1}{Z(\al - 4\ell - 5) \, q^{-\al}}$;
\\\addlinespace
$10 + 8\ell$ & $\dfrac{\ze(\al - n)\, \ze(\al - \frac n2)}{\ze(2\al - n)}$
& $\dfrac{1}
         {Z(\al - 4\ell - 6) \, q^{-\al}}$.
\\\addlinespace\bottomrule
\end{tabular}
\end{table}

\nocite{Ca,Ga,KaMuSu,MuSu,Om,PlRa,Wei1}

\begin{bibdiv}

\begin{biblist}

\bib{AiGoSa}{article}{
  author={Aizenbud, Avraham},
  author={Gourevitch, Dmitry},
  author={Sayag, Eitan},
  title={{$(O(V\oplus F),O(V))$} is a {G}elfand pair for any quadratic space {$V$} over a local field {$F$}},
  date={2009},
  journal={Math. Z.},
  volume={261},
  number={2},
  pages={239\ndash 244},
  review={\MR{2457297 (2010a:22020)}, \Zbl{1179.22017}},
}

\bib{Bo}{article}{
  author={Boavida, Jo{\~a}o Pedro},
  title={Compact periods of {E}isenstein series of orthogonal groups of rank one},
  date={2013},
  journal={Indiana U. Math. J.},
  volume={62},
  number={3},
  pages={869\ndash 890},
  review={\MR{3164848}, \Zbl{1301.11049}},
}

\bib{Bo2}{article}{
  author={Boavida, Jo{\~a}o Pedro},
  title={A spectral identity for second moments of {E}isenstein series of $\gO(n,1)$},
  date={2013},
  journal={Illinois J. Math.},
  volume={57},
  number={4},
  pages={1111\ndash1130},
  review={\MR{3285869}, \Zbl{1302.11031}},
}

\bib{Ca}{book}{
  author={Cassels, J. W.~S.},
  title={Rational quadratic forms},
  series={London Mathematical Society Monographs},
  publisher={Academic Press Inc.},
  address={London},
  date={1978},
  volume={13},
  review={\MR{522835 (80m:10019)}, \Zbl{0395.10029}},
}

\bib{DiGa1}{article}{
  label={DG09a},
  author={Diaconu, Adrian},
  author={Garrett, Paul},
  title={Integral moments of automorphic {$L$}-functions},
  date={2009},
  journal={J. Inst. Math. Jussieu},
  volume={8},
  number={2},
  pages={335\ndash 382},
  review={\MR{2485795 (2010h:11081)}, \Zbl{1268.11065}},
}

\bib{DiGa2}{article}{
  label={DG09b},
  author={Diaconu, Adrian},
  author={Garrett, Paul},
  title={Subconvexity bounds for automorphic {$L$}-functions},
  journal={J. Inst. Math. Jussieu},
  volume={9},
  date={2010},
  number={1},
  pages={95\ndash 124},
  review={\MR{2576799 (2011b:11064)}, \Zbl{1275.11084}},
}

\bib{DiGa3}{article}{
  label={DG09c},
  author={Diaconu, Adrian},
  author={Garrett, Paul},
  title={Averages of symmetric square {$L$}-functions, and applications},
  date={2009},
  status={preprint},
  eprint={http://www.math.umn.edu/~garrett/m/v/sym_two.pdf},
}

\bib{DiGaGo}{article}{
  author={Diaconu, Adrian},
  author={Garrett, Paul},
  author={Goldfeld, Dorian},
  title={Moments for {$L$}-functions for {$GL_r\times GL_{r-1}$}},
  conference={
    title={Patterson 60++ {I}nternational {C}onference on the {O}ccasion of the 60th {B}irthday of {S}amuel {J}.~{P}atterson},
    address={University of {G}{\"o}ttingen},
    date={July 27\ndash 29, 2009}, },
  book={
    title={Contributions in analytic and algebraic number theory},
    editor={Blomer, Valentin},
    editor={Mih{\u a}ilescu, Preda},
    series={Springer Proc. Math.},
    volume={9},
    publisher={Springer},
    place={New York}, },
  date={2012},
  pages={197\ndash 227},
  review={\MR{3060461}, \Zbl{06221503}},
}

\bib{Ga}{article}{
  author={Garrett, Paul},
  title={Euler factorization of global integrals},
  pages={35\ndash 101},
  book={
    title={Automorphic forms, automorphic representations, and arithmetic},
    editor={Doran, Robert~S.},
    editor={Dou, Ze-Li},
    editor={Gilbert, George~T.},
    series={Proc. Sympos. Pure Math.},
    publisher={Amer. Math. Soc.},
    address={Providence, R.I.},
    volume={66.2}, },
  date={1999},
  conference={
    title={NSF\ndash CBMS Regional Conference in Mathematics on Euler Products and Eisenstein Series},
    address={Texas Christian University, Fort Worth, Tex.},
    date={May 20\ndash 24, 1996}, },
  review={\MR{1703758 (2000m:11043)}, \Zbl{1002.11042}},
}

\bib{GrPr1}{article}{
  author={Gross, Benedict~H.},
  author={Prasad, Dipendra},
  title={Test vectors for linear forms},
  date={1991},
  journal={Math. Ann.},
  volume={291},
  number={2},
  pages={343\ndash 355},
  review={\MR{1129372 (92k:22028)}, \Zbl{0768.22004}},
}

\bib{GrPr2}{article}{
  author={Gross, Benedict~H.},
  author={Prasad, Dipendra},
  title={On the decomposition of a representation of {${\rm SO}\sb n$} when restricted to {${\rm SO}\sb {n-1}$}},
  date={1992},
  journal={Canad. J. Math.},
  volume={44},
  number={5},
  pages={974\ndash 1002},
  review={\MR{1186476 (93j:22031)}, \Zbl{0787.22018}},
}

\bib{GrPr3}{article}{
  author={Gross, Benedict~H.},
  author={Prasad, Dipendra},
  title={On irreducible representations of {${\rm SO}\sb {2n+1} \times {\rm SO}\sb {2m}$}},
  date={1994},
  journal={Canad. J. Math.},
  volume={46},
  number={5},
  pages={930\ndash 950},
  review={\MR{1295124 (96c:22028)}, \Zbl{0829.22031}},
}

\bib{GrRe}{article}{
  author={Gross, Benedict~H.},
  author={Reeder, Mark},
  title={From {L}aplace to {L}anglands via representations of orthogonal groups},
  date={2006},
  journal={Bull. Amer. Math. Soc. (N.S.)},
  volume={43},
  number={2},
  pages={163\ndash 205},
  review={\MR{2216109 (2007a:11159)}, \Zbl{1159.11047}},
}

\bib{IcIk}{article}{
  author={Ichino, Atsushi},
  author={Ikeda, Tamotsu},
  title={On the periods of automorphic forms on special orthogonal groups and the {G}ross--{P}rasad conjecture},
  date={2010},
  journal={Geom. Funct. Anal.},
  volume={19},
  number={5},
  pages={1378\ndash1425},
  review={\MR{2585578}, \Zbl{1216.11057}},
}

\bib{IwSa}{article}{
  author={Iwaniec, Henryk},
  author={Sarnak, Peter},
  title={Perspectives on the analytic theory of {$L$}-functions},
  date={2000},
  book={
    title={GAFA 2000},
    editor={Alon, N.},
    editor={Bourgain, J.},
    editor={Connes, A.},
    editor={Gromov, M.},
    editor={Milman, V.},
    publisher={Birk\"auser},
    address={Basel},
  },
  conference={
    title={Visions in Mathematics, towards 2000},
    address={Tel Aviv University},
    date={August 25\ndash September 3, 1999},
  },
  pages={705\ndash 741},
  note={\emph{Geom. Funct. Anal.}, Special Volume, Part II},
  review={\MR{1826269 (2002b:11117)}, \Zbl{0996.11036}},
}

\bib{KaMuSu}{article}{
   author={Kato, Shin-ichi},
   author={Murase, Atsushi},
   author={Sugano, Takashi},
   title={Whittaker--Shintani functions for orthogonal groups},
   journal={Tohoku Math. J. (2)},
   volume={55},
   date={2003},
   number={1},
   pages={1\ndash64},
   review={\MR{1956080 (2003m:22020)}, \Zbl{1037.22034}},
}

\bib{JaLaRo}{article}{
  author={Jacquet, Herv{\'e}},
  author={Lapid, Erez},
  author={Rogawski, Jonathan},
  title={Periods of automorphic forms},
  date={1999},
  journal={J. Amer. Math. Soc.},
  volume={12},
  number={1},
  pages={173\ndash 240},
  review={\MR{1625060 (99c:11056)}, \Zbl{1012.11044}},
}

\bib{Ji}{article}{
  author={Jiang, Dihua},
  title={Periods of automorphic forms},
  date={2007},
  book={
    title={Proceedings of the International Conference on Complex Geometry and Related Fields},
    editor={Yau, Stephen S.-T.},
    editor={Chen, Zhijie},
    editor={Wang, Jianpan},
    editor={Ten, Sheng-Li},
    series={AMS/IP Stud. Adv. Math.},
    volume={39},
    publisher={Amer. Math. Soc.},
    address={Providence, R.I.}, },
  pages={125\ndash 148},
  review={\MR{2338623 (2008k:11052)}, \Zbl{1161.11009}},
}

\bib{LaOf}{article}{
  author={Lapid, Erez},
  author={Offen, Omer},
  title={Compact unitary periods},
  date={2007},
  journal={Compos. Math.},
  volume={143},
  number={2},
  pages={323\ndash 338},
  review={\MR{2309989 (2008g:11091)}, \Zbl{1228.11073}},
}

\bib{LaRo}{article}{
  author={Lapid, Erez},
  author={Rogawski, Jonathan},
  title={Periods of {E}isenstein series},
  date={2001},
  journal={C. R. Acad. Sci. Paris S\'er. I Math.},
  volume={333},
  number={6},
  pages={513\ndash 516},
  review={\MR{1860921 (2002k:11072)}, \Zbl{1067.11028}},
}

\bib{Le}{article}{
  author={Letang, Delia},
  title={Automorphic spectral identities and applications to automorphic $L$-functions on $GL_2$},
  journal={J. Number Theory},
  volume={133},
  date={2013},
  number={1},
  pages={278\ndash317},
  review={\MR{2981412}, \Zbl{06110193}},
}

\bib{MuSu}{article}{
   author={Murase, Atsushi},
   author={Sugano, Takashi},
   title={Shintani function and its application to automorphic $L$-functions
   for classical groups. I. The case of orthogonal groups},
   journal={Math. Ann.},
   volume={299},
   date={1994},
   number={1},
   pages={17\ndash56},
   review={\MR{1273075 (96c:11054)}, \Zbl{0813.11032}},
}

\bib{Om}{book}{
  author={O'Meara, O.~Timothy},
  title={Introduction to quadratic forms},
  series={Classics in Mathematics},
  publisher={Springer-Verlag},
  address={Berlin},
  date={2000},
  note={Reprint of the 1973 edition},
  review={\MR{1754311 (2000m:11032)}, \Zbl{1034.11003}},
}

\bib{PlRa}{book}{
  author={Platonov, Vladimir},
  author={Rapinchuk, Andrei},
  title={Algebraic groups and number theory},
  series={Pure and Applied Mathematics},
  publisher={Academic Press Inc.},
  address={Boston, Mass.},
  date={1994},
  volume={139},
  note={Translated from the 1991 Russian original by Rachel Rowen},
  review={\MR{1278263 (95b:11039)}, \Zbl{0841.20046}},
}

\bib{Sak}{article}{
  author={Sakellaridis, Yiannis},
  title={Spherical varieties and integral representations of $L$-functions},
  journal={Algebra Number Theory},
  volume={6},
  date={2012},
  number={4},
  pages={611\ndash667},
  review={\MR{2966713}, \Zbl{1253.11059}},
}

\bib{SaVe}{article}{
  author={Sakellaridis, Yiannis},
  author={Venkatesh, Akshay},
  title={Periods and harmonic analysis on spherical varieties},
  date={2012},
  status={preprint},
  eprint={arXiv:1203.0039v1 [math.RT]},
}

\bib{Wei1}{book}{
  author={Weil, Andr{\'e}},
  title={Adeles and algebraic groups},
  series={Progress in Mathematics},
  publisher={Birkh\"auser},
  address={Boston, Mass.},
  date={1982},
  volume={23},
  review={\MR{670072 (83m:10032)}, \Zbl{0493.14028}},
}

\end{biblist}

\end{bibdiv}

\end{document}